\newcommand{\R}{\mathbb{R}}
\newcommand{\C}{\mathbb{C}}
\newcommand{\BB}{\mathcal{B}}
\newcommand{\FF}{\mathcal{F}}
\newcommand{\dd}{\;{\rm d}}
\newcommand{\de}{{\rm d}}
\newcommand{\HH}{\mathcal{H}}
\newcommand{\LL}{\mathcal{L}}
\newcommand{\N}{\mathbb{N}}
\newcommand{\integer}{\mathbb{Z}}
\DeclareMathOperator{\Leb}{Leb}
\DeclareMathOperator{\dLeb}{dLeb}
\newcommand{\Z}{\mathbb{Z}}
\newcommand{\T}{\mathcal{T}}
\newcommand{\st}{\;|\;}
\newcommand{\ii}{\mathbf{i}}
\newcommand{\Lp}{\mathcal{L}}
\newcommand{\real}{\mathbb{R}}
\newcommand{\complex}{\mathbb{C}}
\newcommand{\LSRB}{\Lp_{1/|\det DT|}}
\newcommand{\Cs}{C_{\#}}
\newcommand{\Pp}{\mathcal{P}}
\newcommand{\norm}[2]{\left\| #1 \right\|_{#2}}
\newcommand{\nor}[1]{\left\| #1 \right\|}
\DeclareMathOperator{\Card}{Card}
\DeclareMathOperator{\supp}{supp} \DeclareMathOperator{\Id}{Id}
\newcommand{\transposee}[1]{{\vphantom{#1}}^{\mathit t}{#1}}
\newtheorem{thm}{Theorem}
\newtheorem{prop}[thm]{Proposition}
\newtheorem{definition}[thm]{Definition}
\newtheorem{lem}[thm]{Lemma}
\newtheorem{cor}[thm]{Corollary}
\newtheorem*{prop*}{Proposition}
\theoremstyle{definition}
\newtheorem{ex}{Example}
\newtheorem{rmk}[thm]{Remark}
\begin{document}

\title[Spaces for piecewise hyperbolic maps]{Good Banach
spaces for piecewise hyperbolic maps via interpolation}
\author{Viviane Baladi and S\'{e}bastien Gou\"{e}zel}
\address{D.M.A., UMR 8553, \'{E}cole Normale Sup\'{e}rieure,  75005 Paris, France}
\email{viviane.baladi@ens.fr}

\address{IRMAR,
Universit\'{e} de Rennes 1, 35042 Rennes, France}
\email{sebastien.gouezel@univ-rennes1.fr}
\thanks{We are very grateful to W. Sickel and A. Baghdasaryan for helpful
comments about the literature.  VB did part of this work
at UMI 2924 CNRS-IMPA, Rio de Janeiro.
VB is partially supported by ANR-05-JCJC-0107-01.}

\begin{abstract}
We introduce a weak transversality condition
for  piecewise $C^{1+\alpha}$
and piecewise hyperbolic maps which admit a $C^{1+\alpha}$ stable
distribution.
We show good bounds on the essential spectral radius of the associated
transfer operators acting on classical anisotropic
Sobolev spaces of Triebel-Lizorkin type. In many cases, we obtain
a spectral gap from which we deduce the existence of finitely many
physical measures with basin of total measure.
The analysis relies on standard techniques
(in particular complex interpolation) and
applies also to piecewise expanding maps
and to Anosov diffeomorphisms, giving a unifying picture
of several previous results.
\end{abstract}
\date{November 12, 2007}
\maketitle

Proving the existence of physical measures and studying their
statistical properties is an important task in dynamical
systems. In this paper, we shall be concerned with
maps with singularities (that is, discontinuities in the
map or its derivatives). We shall assume that the map is piecewise
smooth relative to a finite partition, and the
most challenging case is when this partition does not have a
Markov-type property.

For one-dimensional piecewise expanding maps, the space of
functions of bounded variation has proved a very powerful tool,
since the transfer operator acting on it has a spectral gap.
This readily implies the existence of finitely many  physical
measures whose basins have full measure, as well as numerous
other consequences.
 This functional approach
has been extended to higher dimensional piecewise expanding
maps, under  stronger assumptions (the counter-examples of
Tsujii \cite{Ts00} and Buzzi \cite{Bu01} show that {\it some}
additional assumption is necessary), by considering various
functional spaces (see the work of Keller, G\'{o}ra--Boyarski,
Saussol, Buzzi, Tsujii, Cowieson  \cite{Ke, GB, Sau, Bu00,
Ts00', Co}). On the other hand, a more elementary approach,
involving a more detailed study of the dynamics and how sets
are cut by the discontinuities, was developed by Young and
Chernov \cite{Yo, Ch}, culminating in the article of
Buzzi--Maume \cite{BuMa} where the existence of physical
measures (or more generally equilibrium measures) was proved
under very weak additional assumptions.

For piecewise hyperbolic maps, finding good functional spaces
on which the transfer operator has a spectral gap is a more
complicated task, and the story went in the other direction,
with the elementary (but very involved) arguments of  Chernov and Young \cite{Ch0,
Yo,
Ch} coming first. Indeed, even for {\it smooth} hyperbolic
dynamics, good spaces of distributions were only introduced
a few years ago by Gou\"{e}zel--Liverani and Baladi--Tsujii
\cite{GL1, BT1, GL2, BT2}, following the pioneering
work of Blank--Keller--Liverani \cite{BKL}. These spaces cannot be used for piecewise
hyperbolic systems because they are not invariant under multiplication
by the characteristic function of a set with smooth boundary.
Only very recently, a good functional space was
constructed by Demers and Liverani \cite{DL}, for two-dimensional piecewise
hyperbolic maps. However,
the arguments in this last paper are close in spirit to the
previous ones \cite{Yo,
Ch}, in the sense that pieces of stable or unstable
manifolds are iterated by the dynamics, and the way they are
cut by the discontinuities has to be studied in a very careful
way. In particular, to ensure
sufficiently precise control, an essential assumption in \cite{Yo,
Ch, DL} is transversality between stable or unstable
manifolds and discontinuity hypersurfaces.

In this paper, we show that, under mild additional assumptions, the transfer
operator of piecewise hyperbolic maps in
arbitrary dimensions has a spectral gap
on classical  functional spaces $\HH_p^{t,t_-}$,
for suitable indices $t_-<0<t$ and $1<p<\infty$. These spaces are
anisotropic Sobolev spaces in the Triebel-Lizorkin class
\cite{VoPa, Tr}.
Moreover, we are able to replace
the strong transversality assumption from \cite{Yo,
Ch, DL} with a much weaker one,
formulated in terms of the geometry of stable manifolds and
discontinuity hypersurfaces: for instance, we allow discontinuity sets
coinciding with pieces of stable manifolds. Of course, this
implicitly assumes the existence of stable
manifolds, and this may be the main current restriction of our approach: we
require stable manifolds to exist everywhere, and to depend in
a piecewise $C^{1+\alpha}$ way on the point for some $\alpha>0$.
(See also Remarks~ \ref{rmk:Eslisse} and ~\ref{remark7}.)

The main novelty in this work is that, as in \cite{Sau, Co},
we do not need to study precisely the dynamics. In particular,
we do not iterate single stable or unstable manifolds (contrary
to \cite{Yo, Ch, DL}), and we do not need to match nearby stable
or unstable manifolds. Indeed, everything comes from the
functional analytic framework. This makes it possible to get a
short self-contained proof working in any dimension and with
very weak transversality assumptions.

Our spaces $\HH_p^{t,t_-}$ (or more precisely their $\tilde
\HH_p^{t_+,t}$ version, see Remark~ \ref{remark7}) are the same
the first named author considered in \cite{baladi:Cinfty} (with
the notation $W^{t,t_+,p}$) to study smooth hyperbolic maps.
The main new observation that we shall use is (Lemma~\ref{lem:multiplier}) that these spaces
are stable under multiplication by characteristic functions of
nice sets, if the smoothness indices in the definition of the
space are small enough with respect to the integrability index
($0<t<1/p$ and $0>t_- >-1+1/p$). This property is well known
(see the thesis \cite{Str} of Strichartz, and also \cite[\S
4.6.3]{RS}) for classical Sobolev spaces where $t_-=0$, and we
will exploit some ideas in \cite{Str} to prove that it extends
to our spaces. For this, we use complex interpolation arguments
to extend easily to our spaces estimates that are
straightforward for the standard Sobolev spaces. Interpolation
also makes it possible to generalize the basic estimates in
\cite{baladi:Cinfty} to arbitrary differentiability  (see
Appendix~\ref{whynotbaladi}). Another helpful technical
ingredient is the use of a ``zooming" norm (\ref{zoom}) (based
on a rather standard localization principle) which allows us to
go further than  \cite{baladi:Cinfty}, which only dealt with
specific transfer operators.

\smallskip
We do not believe that our upper bounds on the essential
spectral radius are also lower bounds in general. However, we
note that for a (non necessarily Markov) piecewise linear map
of the unit square given by a hyperbolic matrix $A$  of maximal
eigenvalue $\lambda >1$ (see Subsection~\ref{pwaff}), we find
for each $\epsilon >0$ a space on which the essential spectral
radius of the ordinary (Perron-Frobenius) transfer operator is
$\le \lambda^{-1/2+\epsilon}$. This is sharper than the results
in \cite{DL}, and may well be the optimal bound  (in the strong
sense of meromorphic extensions of the corresponding zeta
function or essential decorrelation rate \cite{CE}). We refer
also to Subsection~\ref{pwaff} for examples of conservative and
dissipative (sloppy) baker maps to which our results apply.

\smallskip
Our proof extends the results of \cite{baladi:Cinfty} to
$C^{1+\alpha}$ Anosov diffeomorphisms with $C^{1+\alpha}$
stable and/or unstable distributions, and general $C^\alpha$
weights (see Remark~ \ref{extend}). Let us also mention that
our results  apply to piecewise expanding  and piecewise
$C^{1+\alpha}$ maps for $0<\alpha<1$ (without  transversality
assumptions, but under the hypothesis that the dynamical
complexity does not grow too fast), giving yet another
functional space on which the results of Saussol and Cowieson
\cite{Sau, Co}, e.g., hold.
This space is simply the usual Sobolev space $\HH^t_p$ for $1<p
<\infty$ and $0<t <\min(1/p,\alpha)$. Hence, introducing exotic
spaces to study piecewise expanding maps is not necessary. This
remark seems to be new even for one-dimensional piecewise
expanding  maps. (For {\it smooth} expanding maps in arbitrary
dimensions, the transfer operator was studied on Sobolev spaces
in \cite{BB}.)

\smallskip
The paper is organized as follows. Section 1 contains the
definitions (Definition~ \ref{def:PiecHyp}) of the dynamics $T$
considered (in particular, the condition on the stable
foliation) and the spaces (Definition~ \ref{def:spaceX})
$\HH_p^{t,t_-}$, as well as our weak transversality condition
(Definition~ \ref{def:WTC}), and our main result. This main
result, Theorem ~ \ref{thm:MainSpectralThm}, gives a bound on
the essential spectral radius of the transfer operator acting
on $\HH_p^{t,t_-}$. We give  in Corollary~ \ref{cor:BoundSRB}
the consequences of our main result on the existence of
finitely many physical measures with total ergodic basin (based
on a key result given in Appendix~ \ref{sec:SRB}), as well as
variants of this main result under  assumptions on the unstable
foliation. Section~ 2 is devoted to a discussion of several
examples, illustrating our conditions. In Section~ 3, we recall
various classical results in functional analysis. Section~ 4 is
the heart of the paper: it contains the basic bounds
(multiplication by a function, composition by a smooth map
preserving the stable foliation, multiplication by the
characteristic function of a nice set) which lead to
Lasota-Yorke type inequalities. In Section~ 5, we exploit these
bounds, using a new ``zooming" trick made possible by the
localization property of our spaces, to prove Theorem ~
\ref{thm:MainSpectralThm}.


\section{Statements}

Notations: if $B$  is a Banach
space, we denote the norm of an element $f$ of $B$ by
$\norm{f}{B}$. In this paper, a function defined on a closed
subset of a manifold is said to be $C^k$ or $C^\infty$ if it
admits an extension to a neighborhood of this closed subset,
which is $C^k$ or $C^\infty$ in the usual sense.


\subsection{The setting}
Let $X$ be a riemannian manifold of dimension $d$, and let
$X_0$ be a compact subset of $X$. Let also $0\le d_s\leq d$ and
$\alpha> 0$. We call $C^1$ hypersurface with boundary a
codimension-one $C^1$ submanifold of $X$ with boundary. For a
closed subset $K$ of $X_0$ we shall consider {\it integrable
$C^{1+\alpha}$ distributions} of $d_s$-dimensional subspaces
$E^s$ on $K$. By definition, this means that for each $x$ in a
neighborhood of $K$, $E^s(x)$ is a $d_s$-dimensional vector
subspace of the tangent space $\T_x X$, the map $x\mapsto
E^s(x)$ is $C^{1+\alpha}$ and, for any $x\in K$, there exists a
unique submanifold of dimension $d_s$ containing $x$, defined
on a neighborhood of $x$, and everywhere tangent to $E^s$. We
will denote this local submanifold by $W^s_{loc}(x)$, and by
$W^s_\epsilon(x)$ we will mean the ball of size $\epsilon$
around $x$ in this submanifold.

\begin{definition}[Piecewise hyperbolic maps with stable distribution]
\label{def:PiecHyp} For $\alpha>0$, we say that a map $T: X_0
\to X_0$ is a piecewise $C^{1+\alpha}$ hyperbolic map with
smooth stable distribution if
\begin{itemize}
\item There exists an integrable $C^{1+\alpha}$ distribution of $d_s$-dimensional
subspaces $E^s$ on a neighborhood of $X_0$.
\item There exists a finite number of disjoint open subsets
$O_1,\dots,O_I$ of $X_0$, covering Lebesgue-almost all
$X_0$, whose boundaries are unions of finitely many compact
$C^1$ hypersurfaces with boundary.
\item For $1\leq i\leq I$, there exists a $C^{1+\alpha}$ map $T_i$
defined on a neighborhood of $\overline{O_i}$, which is a
diffeomorphism onto its image, such that $T$ coincides with
$T_i$ on $O_i$.
\item For any $x\in \overline{O_i}$, there exists $\lambda_s(x)<1$
such that, for any $v\in E^s(x)$, $DT_i(x)v\in E^s(T_i x) $
and $|DT_i(x) v| \leq \lambda_s(x) |v|$.
\item There exists a family of cones $C^u(x)$, depending continuously
on $x\in X_0$, with $C^u(x)+E^s(x)=\T_x X$, such that, for
any $x\in \overline{O_i}$, $DT_i(x) C^u(x) \subset C^u(T_i
x)$, and there exists $\lambda_u(x)>1$ such that $|DT_i (x)
v|\geq \lambda_u(x) |v|$ for any $v\in C^u(x)$.
\end{itemize}
\end{definition}

See Remark~\ref{remark7} and Subsection~\ref{exchangeus}
regarding the replacement of $E^s$ by $E^u$ and $C^u$ by $C^s$
in the above definition.

Note that we do not assume that $T$ is continuous or injective
on $X_0$.

When $d_s=0$, the map $T$ is piecewise expanding. When $d_u=0$,
it is piecewise contracting (we shall see that our results are
not very useful in this case). In the intermediate case, there
are at the same time contracted and expanded directions. We
will denote by $\lambda_{s,n}(x)<1$ and $\lambda_{u,n}(x)>1$
the weakest contraction and expansion constants of $T^n$ at
$x$.

\begin{rmk}
\label{rmk:Eslisse} The requirement that $E^s$ is defined
everywhere and $C^{1+\alpha}$ is extremely strong. It is
possible to weaken it slightly, by requiring only that $E^s$ is
$C^{1+\alpha}$ on each set $O_i$. Indeed, our proofs still work
under this weaker assumption (one should just slightly modify
the definition of the Banach space we use). It is also possible
to apply directly our results to this more general setting, by
working on a different manifold, as follows. Assume that $T$ is
a piecewise hyperbolic map for which $E^s$ is $C^{1+\alpha}$ on
each set $O_i$, but not globally. Start from the disjoint union
of the sets $\overline{O_i}$, and glue them together at all the
points $x\in \overline{O_i} \cap \overline{O_j}$ such that
$E^s$ is $C^{1+\alpha}$ on a neighborhood of $x$. Then $T$
induces a piecewise hyperbolic map on this new manifold, for
which the stable distribution is globally $C^{1+\alpha}$.
Indeed, since $T$ is $C^{1+\alpha}$ on each set $O_i$, the set
$T(O_i)$ intersects the boundaries of the sets $O_j$ only at
places where $E^s$ is $C^{1+\alpha}$. Hence, the places in the
original manifold where $\overline{O_i}$ and $\overline{O_j}$
are cut apart are not an obstruction to extending $T$ to the
new manifold. The assumption on the $C^u$ can be similarly
weakened.
\end{rmk}

In order to define our weak transversality
condition on the boundaries of the sets $O_i$, we shall use the following notion.

\begin{definition}[$L$-generic vector in $E^s$]
Let $K\subset X_0$ be a compact hypersurface with boundary and
let $L  \in \integer_+$. For $x\in K\backslash \partial K$, we
say that a vector $a\in E^s(x)$ is \emph{$L$-generic} with
respect to $K$ if, for any $C^1$ vector field $v$ defined on a
neighborhood of $x$, with $v(x)=a$ and $v(y)\in E^s(y)$ for any
$y$, there exists a smaller neighborhood of $x$ in which the
intersection of Lebesgue almost every integral line of $v$ with
$K$ has at most $L$ points.
\end{definition}

\begin{definition}[Weak transversality condition for $E^s$]
\label{def:WTC} Let $T:X_0 \to X_0$ be a piecewise hyperbolic
map with smooth stable distribution. We say that $T$ satisfies
the \emph{weak transversality condition} if there exists $L>0$
such that, for any $K \subset \bigcup_{i=1}^I \partial O_i$
which is hypersurface with boundary, there exists a larger
hypersurface with boundary $K'$ (containing $K$ in its
interior) such that, for any $x\in K'\backslash \partial K'$,
the set of tangent vectors at $x$ that are $L$-generic with
respect to $K'$ has full Lebesgue measure in
$E^s(x)$.\footnote{We could replace ``full Lebesgue measure''
in this definition by ``generic in the sense of Baire'' (i.e.,
contains a countable intersection of dense open sets), all the
following results would hold true as well, with the same
proofs.}
\end{definition}
The small enlargement $K'$ of $K$ is simply a technical point
in the definition, to avoid problems at the boundary of $K$.

If the boundary of each $O_i$ is a finite union
of smooth hypersurfaces $K_{i1},\dots, K_{ik_i}$, each of which
is transversal to the stable direction (in the sense that
$E^s(x)$ is never contained in $\T_x K_{ij}$), then $T$
satisfies the weak transversality condition. However, the
converse does not hold. For instance, we have the
following result:

\begin{prop}
Assume that $d_s=1$ (so that the stable manifolds are curves),
and that $T$ is a piecewise hyperbolic map with smooth stable
distribution. Then $T$ satisfies the weak transversality
condition if there exists $\epsilon>0$ such that
  \begin{equation}
  \sup_{1\leq i \leq I} \norm{
  \Card( W^s_\epsilon(x)\cap \partial O_i)}{L_\infty(\Leb)} < \infty.
  \end{equation}
\end{prop}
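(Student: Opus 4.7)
The plan relies on the one-dimensionality of $E^s$. Since $E^s$ is integrable with one-dimensional integral manifolds, any $C^1$ vector field $v$ with $v(y) \in E^s(y)$ on a neighborhood of $x$ has as integral curves (up to reparametrization) the short portions of the local stable leaves through nearby points. Consequently, whether a nonzero vector $a \in E^s(x)$ is $L$-generic depends only on the stable foliation near $x$ and not on the choice of $a$ or extension $v$; in particular, the ``full Lebesgue measure'' requirement in the one-dimensional space $E^s(x)$ collapses to a single geometric condition.

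Let $M := \sup_i \|\Card(W^s_\epsilon(\cdot) \cap \partial O_i)\|_{L_\infty(\Leb)}$, finite by hypothesis. I would take $L := IM + C$, where $I$ is the number of sets $O_i$ and $C$ accounts for boundary effects. Given $K \subset \bigcup_i \partial O_i$, a $C^1$ hypersurface with boundary, construct $K'$ by extending $K$ past $\partial K$. Since $\bigcup_i \partial O_i$ is itself a finite union of compact $C^1$ hypersurfaces with boundary, one may extend $K$ within this union except at codimension-two corners where boundary pieces meet; at such corners one adds a small external $C^1$ collar.

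For $x \in K' \setminus \partial K'$ and any admissible $v$, take a neighborhood $U$ of $x$ of diameter much smaller than $\epsilon$, so that for every $y \in U$ the integral line $\gamma_y \subset U$ of $v$ through $y$ is contained in $W^s_\epsilon(y)$. Splitting $K' = K \sqcup (K' \setminus K)$,
\[
\Card(\gamma_y \cap K') \le \sum_{i=1}^{I} \Card(W^s_\epsilon(y) \cap \partial O_i) + \Card(W^s_\epsilon(y) \cap (K' \setminus K)).
\]
The first sum is bounded by $IM$ for Lebesgue-a.e.~$y$ by the hypothesis applied to each $\partial O_i$ together with $K \subset \bigcup_i \partial O_i$. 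This delivers the weak transversality condition as soon as the second term can be made uniformly bounded.

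The main obstacle is precisely that second term, which is not directly controlled by the hypothesis since $K' \setminus K$ need not lie in $\bigcup_i \partial O_i$. I would address it by shrinking the external $C^1$ collar small enough that stable leaves (which carry locally bounded $C^{1+\alpha}$ geometry) cross it in at most $C$ points for Lebesgue-a.e.~$y$, using that transverse intersections are finite and tangencies form a Lebesgue-null set of leaves (hence harmless in the a.e.~statement required for the $L$-generic condition). In the generic configuration where one can arrange $K' \subset \bigcup_i \partial O_i$ globally, one takes $C = 0$ and $L = IM$ directly.
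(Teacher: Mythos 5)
Your key observation is correct and is the right starting point: when $d_s=1$, any admissible $C^1$ vector field $v$ tangent to $E^s$ with $v(x)=a\neq 0$ has integral curves that are reparametrizations of the local stable leaves near $x$, so $L$-genericity of a nonzero $a$ depends only on the foliation geometry, and the ``full Lebesgue measure in $E^s(x)$'' requirement collapses to a single condition on the stable foliation near $x$. The splitting of $\Card(\gamma_y\cap K')$ into the $K$-part and the $(K'\setminus K)$-part, with the first bounded by $IM$ for Lebesgue-a.e.~$y$ using $K\subset\bigcup_i\partial O_i$, is also sound.

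The treatment of the second term, however, leaves a genuine gap. ``Shrinking the external $C^1$ collar'' does not by itself bound the number of crossings: a $C^1$ hypersurface can cross a fixed $C^{1+\alpha}$ leaf arbitrarily many times in any neighborhood of a tangency, and the $C^1$-compatibility at $\partial K$ may force the collar's tangent planes to contain $E^s$ precisely where $K$ itself is tangent to $E^s$, so you cannot simply wiggle the extension to be transverse. The appeal to ``transverse intersections are finite and tangencies form a Lebesgue-null set of leaves'' conflates finiteness with a uniform bound $C$, and the Sard-type assertion for a $C^1$ extension against a merely $C^{1+\alpha}$ distribution is not justified. Moreover, the constant $L$ in the weak transversality condition is fixed before $K$ is chosen, so ``$C$ accounts for boundary effects'' must be one explicit constant working uniformly over all $K$, not a case-by-case allowance; your closing remark handles only the favorable situation $K'\subset\bigcup_i\partial O_i$, which need not hold when $K$ is already a maximal piece of some $\partial O_i$.

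What is needed is an explicit construction of the extension with a uniform crossing bound. For instance: near $z\in\partial K$ where $E^s(z)\subset\T_zK$, append a $W^s$-saturated collar (a union of short stable arcs through a transversal slice of $\partial K$), which Lebesgue-a.e.~leaf misses entirely; near $z$ where $E^s(z)\not\subset\T_zK$, append a collar whose tangent planes stay uniformly transverse to $E^s$, so each leaf crosses it at most once in a small neighborhood. Blending these two regimes along $\partial K$, and dealing with the degenerate boundary points where $\partial K$ itself is tangent to $E^s$, is exactly the work your argument skips. As written, the proof does not establish the proposition.
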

Hence, tangencies to the boundaries of the $O_i$'s are allowed,
and even flat tangencies or pieces of the boundary coinciding
with $W^s$. The only problematic situation is when a boundary
oscillates around the stable manifold, cutting it into
infinitely many small pieces.

\smallskip

To get a result on the physical measures of finitely
differentiable maps $T$, it is necessary to add {\it some}
assumption on the asymptotic dynamical complexity, already for
piecewise expanding maps in dimension two or higher (see
\cite{Sau}, \cite{BuMa}, \cite{Co}, \cite{Ts00} and
\cite{Bu01}). We shall use the following way to quantify the
complexity.

Let $\ii=(i_0,\dots,i_{n-1})\in \{1,\dots,I\}^n$. We define
inductively sets $O_\ii$ by $O_{(i_0)}=O_i$, and
  \begin{equation}
  O_{(i_0,\dots,i_{n-1})}=\{x\in O_{i_0} \st T_{i_0}x\in
  O_{(i_1,\dots,i_{n-1})}\}.
  \end{equation}
Let also $T_\ii=T_{i_{n-1}}\circ \dots \circ T_{i_0}$, it is
defined on a neighborhood of $O_\ii$.

We define the complexity at the beginning
  \begin{equation}\label{cpb}
  D^b_n=\max_{x\in X_0} \Card \{ \ii=(i_0,\dots,i_{n-1}) \st x \in
  \overline{O_{\ii}} \},
  \end{equation}
and the complexity at the end
  \begin{equation}\label{cpe}
  D^e_n=\max_{x\in X_0} \Card \{ \ii=(i_0,\dots,i_{n-1}) \st x \in
  \overline{T^n(O_{\ii})} \}.
  \end{equation}


\subsection{The main spectral result}

We shall use spaces $\HH_p^{t,t_-}$ which were first introduced in a dynamical
setting in \cite{baladi:Cinfty} (the local version of these
spaces belongs to the Triebel-Lizorkin class, see \cite{VoPa},
\cite{Bagh}, \cite{Tr} for earlier mentions of these  spaces in
functional analysis). Section \ref{sec:local} is devoted to a
precise study of these spaces, and the statements in the
following definition are justified there.

Let $\FF$ denote the Fourier transform in $\R^d$. We will write
a point $z\in \R^d$ as $z=(x,y)$ where $x=(z_1,\dots,z_{d_u})$
and $y=(z_{d_u+1},\dots, z_d)$. In the same way, an element
$\zeta$ of the dual space of $\R^d$ will be written as
$\zeta=(\xi,\eta)$. The subspaces $\{x\}\times \R^{d_s}$ of
$\R^d$ will sometimes be referred to as the ``stable leaves''
in $\R^d$. We say that a diffeomorphisms sends stable leaves to stable
leaves if its derivative has this property.

\begin{definition}[Local spaces $H^{t,t_-}_p$]
\label{def:space} For $1<p<\infty$, $t, t_- \in \real$,
we define a space $H_p^{t,t_-}$ of distributions in $\R^d$ as the
(tempered) distributions $u$ such that
$$\FF^{-1}(
(1+|\xi|^2+|\eta|^2)^{t/2} (1+|\eta|^2)^{t_-/2} \FF u)\in L_p,
$$
 with its canonical norm.
\end{definition}

We will simply write
$H_p^t$ instead of $H_p^{t,0}$.

If $t \ge 0$, $t+t_- \le 0$
and $t+|t_-|<\alpha<1$, we shall see that $H^{t,t_-}_p$
is invariant under $C^{1+\alpha}$ diffeomorphisms
sending stable leaves to stable leaves
(Remark~\ref{rmk:Invariance}). Hence, we can glue such
spaces locally together in appropriate coordinate patches, to
define a space $\HH_p^{t,t_-}$ of distributions on the manifold:

\begin{definition}[Spaces  $\HH_p^{t,t_-}$ of distributions on $X$]
\label{def:spaceX} Let $t \ge 0$, $t+t_- \le 0$ and
$t+|t_-|<\alpha<1$. Fix  a finite number of $C^{1+\alpha}$
charts $\kappa_1,\dots,\kappa_J$ whose derivatives send $E^s$
to $\{0\}\times \R^{d_s}$, and whose domains of definition
cover a compact neighborhood of $X_0$, and a partition of unity
$\rho_1,\dots,\rho_J$, such that the support of $\rho_j$ is
compactly contained in the domain of definition of $\kappa_j$,
and $\sum \rho_j=1$ on $X_0$. The space $\HH_p^{t,t_-}$ is then
the space of distributions\footnote{\label{page:foot}On a manifold, the
space of \emph{generalized functions} supported in $X_0$, i.e.,
elements in the dual of the space of smooth densities, and the
space of \emph{generalized densities} supported in $X_0$, i.e.,
elements in the dual of the space of smooth functions, are
isomorphic if $X_0$ is compact: taking $\Leb$ any smooth
riemannian measure then $f\mapsto f\dLeb$ gives an isomorphism.
``Distributions supported in $X_0$" (not to be confused with
the integrable distributions of subspaces in
Definition~\ref{def:PiecHyp}) refers in this paper to
generalized functions (this avoids jacobians in the change
of variables).} $u$ supported on $X_0$  such that
$(\rho_j u)\circ \kappa_j^{-1}$ belongs to $H_p^{t,t_-}$ for
all $j$, endowed with the norm
  \begin{equation}
  \label{defnorm}
  \norm{u}{\HH_p^{t,t_-}}=\sum \norm{ (\rho_j u) \circ
  \kappa_j^{-1}}{H_p^{t,t_-}}.
  \end{equation}
\end{definition}

Changing the charts and the partition of unity gives
an equivalent norm on the same space of distributions
by Lemma~\ref{Leib} and Remark~\ref{rmk:Invariance}.
To fix ideas, we shall view the charts and partition of unity as fixed.

\begin{rmk}\label{remark7}
Note that \cite{baladi:Cinfty} considers a slightly different
space, where the stable and unstable direction and the signs of
$t$ and $t+t_-$ are exchanged. This choice is completely
innocent, we also get the same results for the space of
\cite{baladi:Cinfty} (for maps with smooth unstable
distribution) in Theorem \ref{thm:smooth_unstable}.
\end{rmk}

Our main result follows (recall the notation
\eqref{cpb}--\eqref{cpe}):

\begin{thm}[Spectral theorem for smooth stable distributions]
\label{thm:MainSpectralThm} Let $\alpha\in (0,1]$. Let $T$ be a
piecewise $C^{1+\alpha} $hyperbolic map with smooth stable
distribution, satisfying the weak transversality condition. Let
$1<p<\infty$ and let $t,t_-$ be so that $1/p-1<t_-<0<t<1/p$,
$t+t_-<0$ and $t+|t_-|<\alpha$.

Let $g:X_0\to \C$ be a function such that the restriction of
$g$ to any $O_i$ admits a $C^{\alpha}$ extension to
$\overline{O_i}$. Define an operator $\Lp_g$ acting on bounded
functions by $(\Lp_g u)(x)=\sum_{Ty=x} g(y)u(y)$. Then $\Lp_g$
acts continuously on $\HH_p^{t,t_-}$. Moreover, its essential
spectral radius is at most
  \begin{equation}\label{bdess}
  \lim_{n\to\infty}
  (D_n^b)^{1/(pn)} \cdot
  (D_n^e)^{(1/n)(1-1/p)}
  \cdot \norm{g^{(n)}|\det DT^n|^{1/p}\max(\lambda_{u,n}^{-t},
  \lambda_{s,n}^{-(t+t_-)})}{L_\infty}^{1/n},
  \end{equation}
where $g^{(n)}=\prod_{j=0}^{n-1}g\circ T^j$.
\end{thm}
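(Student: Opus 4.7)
The plan is to establish a Lasota--Yorke-type inequality
\begin{equation*}
\|\Lp_g^n u\|_{\HH_p^{t,t_-}} \le M_n \|u\|_{\HH_p^{t,t_-}} + R_n \|u\|_{\HH_p^{t',t'_-}},\quad n\ge 1,
\end{equation*}
with $t'<t$ and $t'_-<t_-$ chosen so that the indices remain admissible and the embedding $\HH_p^{t,t_-} \hookrightarrow \HH_p^{t',t'_-}$ is compact (a Rellich-type result in this anisotropic Triebel--Lizorkin scale), and with $M_n$ equal to a fixed constant times the $n$-th term under the limit in \eqref{bdess}. Hennion's theorem will then give the essential spectral radius bound.

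I would begin with the single-branch estimate. Decompose
\begin{equation*}
\Lp_g^n u = \sum_\ii \Lp_{g,\ii} u,\qquad \Lp_{g,\ii} u := (g^{(n)} u \mathbf{1}_{O_\ii}) \circ T_\ii^{-1},
\end{equation*}
so $\Lp_{g,\ii} u$ is supported in $T_\ii(O_\ii)$. Combining the three main results of Section~4 --- composition by the $C^{1+\alpha}$ diffeomorphism $T_\ii^{-1}$, which preserves stable leaves and produces the anisotropic geometric factors $|\det DT^n|^{1/p}\max(\lambda_{u,n}^{-t},\lambda_{s,n}^{-(t+t_-)})$; multiplication by the $C^\alpha$ function $g^{(n)}$; and multiplication by $\mathbf{1}_{O_\ii}$ via Lemma~\ref{lem:multiplier} --- yields
\begin{equation*}
\|\Lp_{g,\ii} u\|_{\HH_p^{t,t_-}} \le C A_\ii \|\mathbf{1}_{O_\ii} u\|_{\HH_p^{t,t_-}} + (\textrm{lower-order remainder}),
\end{equation*}
with $A_\ii = \sup_{O_\ii} g^{(n)}|\det DT^n|^{1/p}\max(\lambda_{u,n}^{-t},\lambda_{s,n}^{-(t+t_-)})$. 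For the multiplier step, $\partial O_\ii$ is a finite union of preimages of the original $\partial O_i$ under iterates of $T$; since $T$ preserves $E^s$, the weak transversality condition is inherited by these preimages, so the constant in Lemma~\ref{lem:multiplier} depends only on $L$ (and on the number of smooth pieces, whose accounting is deferred to the next step).

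I would then sum these single-branch bounds using the ``zooming'' norm \eqref{zoom}, a Triebel--Lizorkin-style localization / $\ell^p$-valued characterization of $\HH_p^{t,t_-}$. On the target side, the sets $T_\ii(O_\ii)$ have pointwise overlap bounded by $D_n^e$; a bounded-overlap triangle inequality combined with H\"older ($\ell^1 \hookrightarrow \ell^{p'}$ on a $D_n^e$-element set) produces a factor $(D_n^e)^{1-1/p}$ in front of the $\ell^p$-sum of $\|\Lp_{g,\ii} u\|_{\HH_p^{t,t_-}}$. On the source side, the $O_\ii$ have overlap at most $D_n^b$, and the localization combined with Lemma~\ref{lem:multiplier} gives $\sum_\ii \|\mathbf{1}_{O_\ii} u\|_{\HH_p^{t,t_-}}^p \le C D_n^b \|u\|_{\HH_p^{t,t_-}}^p$, yielding the factor $(D_n^b)^{1/p}$. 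Combining, $M_n = C(D_n^b)^{1/p}(D_n^e)^{1-1/p}\sup_\ii A_\ii$, with the lower-order remainders absorbed into $R_n\|u\|_{\HH_p^{t',t'_-}}$, and Hennion closes the argument.

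The main difficulty I expect is the clean split in the second step: a na\"ive triangle inequality would produce a joint factor $D_n^b D_n^e$ with the wrong exponent, and only the $\ell^p$-valued localization characterization of $\HH_p^{t,t_-}$ --- which the zooming norm \eqref{zoom} encodes --- yields the H\"older balance $(D_n^b)^{1/p}(D_n^e)^{1-1/p}$. A second delicate point is the uniformity in $\ii$ of the multiplier bound for $\mathbf{1}_{O_\ii}$: this rests on preservation of the weak transversality condition under iteration, which holds thanks to the invariance of $E^s$ under $DT$ in Definition~\ref{def:PiecHyp}.
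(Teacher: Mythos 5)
Your overall plan --- a Lasota--Yorke inequality built from the Section~4 estimates and closed via Hennion --- matches the paper's strategy, and you correctly identify the two combinatorial exponents $(D_n^b)^{1/p}$ and $(D_n^e)^{1-1/p}$. But the mechanism you propose for the $D_n^b$ factor contains a genuine error: the asserted inequality $\sum_\ii \norm{\mathbf{1}_{O_\ii} u}{\HH_p^{t,t_-}}^p \le C D_n^b \norm{u}{\HH_p^{t,t_-}}^p$ is false, and $D_n^b$ is not the quantity that governs it. The $O_\ii$ are pairwise disjoint, so $D_n^b$ (overlap of the closures) stays small, but the \emph{number} of nonempty pieces grows like $I^n$, and each truncation by $\mathbf{1}_{O_\ii}$ costs Sobolev norm along $\partial O_\ii$. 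For a concrete test take $d_s=0$, $d=1$, $u\equiv 1$, and the dyadic partition of $[0,1]$ into $2^n$ intervals: $\norm{\mathbf{1}_{[k 2^{-n},(k+1)2^{-n}]}}{H_p^t}$ is of order $2^{-n(1/p - t)}$ when $0 < t < 1/p$, so $\sum_k \norm{\mathbf{1}_{O_k}}{H_p^t}^p$ is of order $2^{ntp}$, exponentially large, while $\norm{u}{H_p^t}$ and $D_n^b$ are $O(1)$. It is the exponential multiplicity of branches, not overlap, that does the damage, and $D_n^b$ says nothing about multiplicity.

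What closes the argument is a spatial decomposition of $u$ performed \emph{before} summing over branches: Lemma~\ref{lem:localization} provides $u = \sum_{j,m} v_{j,m}$ with $\sum_{j,m}\norm{\tilde v_{j,m}}{H_p^{t,t_-}}^p \le \Cs \norm{u}{n}^p$, and after the zoom $R_n$ the support of each $v_{j,m}$ is small enough to meet at most $D_n^b$ of the closures $\overline{O_\ii}$. Then for each fixed $(j,m)$, $\Lp_g^n v_{j,m}$ has at most $D_n^b$ nonzero branch contributions, and the factor $D_n^b$ enters as a plain count of indices in the inner sum, not as a bound on $\sum_\ii$ of truncated Sobolev norms of the global $u$. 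Your proposal invokes the zooming norm \eqref{zoom} but omits the decisive $\eta_m$-decomposition, which is precisely what converts the exponentially long sum over $\ii$ into a double sum over $(j,m,\ii)$ with the inner sum of length $\le D_n^b$. The same device underlies the multiplier bound you defer: Lemma~\ref{lem:multiplier} is stated for coordinate-axis lines, and one must first pass to the adapted local linear frame $M$ from Lemma~\ref{lem:truncate}, available only once the support of $v_{j,m}$ is small, yielding a multiplier constant of order $\Cs n$ rather than a constant ``depending only on $L$.''
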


When we say that $\Lp_g$ acts continuously on $\HH_p^{t,t_-}$,
we should be more precise. We mean that, for any $u\in
\HH_p^{t,t_-}\cap L_\infty(\Leb)$, then $\Lp_g u$, which is
defined as a bounded function, still belongs to $\HH_p^{t,t_-}$
and satisfies $\norm{\Lp_g u}{\HH_p^{t,t_-}}\leq C
\norm{u}{\HH_p^{t,t_-}}$. Since the set of bounded functions is
dense in $\HH_p^{t,t_-}$ (by Lemma~\ref{14}), the operator
$\Lp_g$ can therefore be extended to a continuous operator on
$\HH_p^{t,t_-}$.

Note that the limit in (\ref{bdess}) exists by
submultiplicativity. Of course, we can bound $\lambda_{s,n}$
and $\lambda_{u,n}^{-1}$ by $\lambda^n$, where $\lambda<1$ is
the weakest rate of contraction/expansion of $T$. In some
cases, it will be important to use the more precise expression
given above (see e.g.~Example \ref{ex5} below).

The restriction $1/p-1<t_-<0<t<1/p$ is exactly designed so that
the space $\HH_p^{t,t_-}$ is stable under multiplication by
characteristic functions of nice sets, see Lemma
\ref{lem:multiplier}. While this feature will be used in an
essential way in the proof, it also implies (see Remark~\ref{nodirac}
in Appendix~\ref{sec:SRB}) that Dirac measures
(or more generally measures supported on nice hypersurfaces) do
not belong to the space $\HH_p^{t,t_-}$.

\subsection{Physical measures}

The physical measures of $T$ are by definition the
probability measures $\mu$ such that there exists a set $A$ of
positive Lebesgue measure such that, for all $x\in A$, $1/n
\sum_{k=0}^{n-1}\delta_{T^k x}$ converges weakly to $\mu$.

The physical measures of $T$ are often studied
through the transfer operator $\LSRB$.
(Note that the dual of $\LSRB$ preserves Lebesgue measure.)
Theorem \ref{thm:MainSpectralThm} becomes in this setting:
\begin{cor}
\label{cor:BoundSRB} Under the assumptions of Theorem
\ref{thm:MainSpectralThm}, assume that
  \begin{equation}
  \label{eq:qlksdjfml}
  \lim_{n\to\infty}
  (D_n^b)^{1/(np)}\cdot
  (D_n^e)^{(1/n)(1-1/p)} \cdot \norm{\max(\lambda_{u,n}^{-t},
  \lambda_{s,n}^{-(t+t_-)})|\det
  DT^n|^{1/p-1}}{L_\infty}^{1/n} <1.
  \end{equation}
Then the essential spectral radius of $\LSRB$ acting on
$\HH_p^{t,t_-}$ is $<1$.
\end{cor}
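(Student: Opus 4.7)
The plan is to deduce Corollary~\ref{cor:BoundSRB} as a direct specialization of Theorem~\ref{thm:MainSpectralThm} to the specific weight that turns $\Lp_g$ into the Perron--Frobenius operator, namely $g = 1/|\det DT|$ (which is indeed $\LSRB$ by definition). The whole argument should be a short identification of the two bounds \eqref{bdess} and \eqref{eq:qlksdjfml}, preceded by a quick check that the hypotheses of Theorem~\ref{thm:MainSpectralThm} are satisfied for this particular $g$.

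First, I would verify the regularity hypothesis on $g$. By Definition~\ref{def:PiecHyp}, on each piece $O_i$ the map $T$ coincides with a $C^{1+\alpha}$ diffeomorphism $T_i$ defined on a neighborhood of $\overline{O_i}$. Hence $x \mapsto |\det DT_i(x)|$ is $C^{\alpha}$ and bounded below by a positive constant on a neighborhood of $\overline{O_i}$, so its reciprocal $g|_{O_i} = 1/|\det DT_i|$ admits a $C^{\alpha}$ extension to $\overline{O_i}$. This is exactly the requirement on $g$ in Theorem~\ref{thm:MainSpectralThm}, so that theorem applies and gives continuity of $\LSRB$ on $\HH_p^{t,t_-}$ together with the essential spectral radius bound \eqref{bdess}.

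Next I would compute the iterated weight by the chain rule:
$$g^{(n)}(x) = \prod_{j=0}^{n-1} g(T^j x) = \prod_{j=0}^{n-1}\frac{1}{|\det DT(T^j x)|} = \frac{1}{|\det DT^n(x)|}.$$
Consequently the weight factor in \eqref{bdess} simplifies to
$$g^{(n)}\,|\det DT^n|^{1/p} \;=\; |\det DT^n|^{1/p - 1}.$$
Substituting this into \eqref{bdess}, the upper bound on the essential spectral radius of $\LSRB$ on $\HH_p^{t,t_-}$ produced by Theorem~\ref{thm:MainSpectralThm} is exactly the limit that appears in \eqref{eq:qlksdjfml}; by assumption this limit is strictly less than one, which gives the conclusion.

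There is essentially no obstacle here: the corollary is a repackaging of the main theorem. The only things that require minimal care are (i) confirming the $C^{\alpha}$ regularity of $1/|\det DT|$ on each closed piece $\overline{O_i}$, which follows at once from piecewise $C^{1+\alpha}$ diffeomorphism-ness, and (ii) telescoping the Jacobians so that the $g^{(n)}$ factor combines with $|\det DT^n|^{1/p}$ to form $|\det DT^n|^{1/p-1}$, matching the expression in \eqref{eq:qlksdjfml}.
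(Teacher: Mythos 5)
Your proposal is correct and matches the paper's (implicit) argument: the paper presents the corollary as a direct specialization of Theorem~\ref{thm:MainSpectralThm} to the weight $g=1/|\det DT|$, and your verification of the $C^\alpha$ regularity of $g$ on each $\overline{O_i}$ together with the telescoping identity $g^{(n)}=1/|\det DT^n|$ is exactly what is needed to turn \eqref{bdess} into \eqref{eq:qlksdjfml}.
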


Together with classical arguments, this implies the following:
\begin{thm}
\label{thm:ExistSRB} Under the assumptions of Theorem
\ref{thm:MainSpectralThm}, if \eqref{eq:qlksdjfml} holds, then $T$
has a finite number of physical measures, which are invariant and
ergodic, whose basins cover Lebesgue almost all $X_0$.
Moreover, if $\mu$ is one of these measures, there exist an
integer $k$ and a decomposition $\mu=\mu_1+\dots+\mu_k$ such
that $T$ sends $\mu_j$ to $\mu_{j+1}$ for $j\in \Z/k\Z$, and
the probability measures $k\mu_j$ are exponentially mixing for
$T^k$ and H\"{o}lder test functions.
\end{thm}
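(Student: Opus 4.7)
The plan is to derive Theorem \ref{thm:ExistSRB} from Corollary \ref{cor:BoundSRB} by the now-standard machinery of quasi-compact transfer operators, combined with the abstract basin-covering lemma deferred to Appendix \ref{sec:SRB}. First I would show that the spectral radius of $\LSRB$ on $\HH_p^{t,t_-}$ equals $1$: the upper bound follows from the Lasota-Yorke-type inequalities produced in the proof of Theorem \ref{thm:MainSpectralThm} together with the fact that $\LSRB$ is an $L_1$-contraction on non-negative densities (its dual preserves Lebesgue), while the lower bound is obtained through a Schauder-Tychonoff compactness argument yielding at least one absolutely continuous invariant probability whose density belongs to $\HH_p^{t,t_-}$. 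Combined with Corollary \ref{cor:BoundSRB}, this gives quasi-compactness: the spectrum of $\LSRB$ in $\{|z|\ge 1\}$ reduces to finitely many eigenvalues of finite multiplicity on the unit circle.

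Second, I would invoke the classical positivity analysis of peripheral spectra (as in \cite{BKL, GL1}). Positivity of $\LSRB$ forces the generalized eigenspace for the eigenvalue $1$ to be spanned by a finite family of non-negative densities with mutually singular supports, which yield absolutely continuous ergodic invariant probabilities $\mu^{(1)},\dots,\mu^{(m)}$. The other peripheral eigenvalues are roots of unity, and the associated eigenfunctions provide, on each $\mu^{(j)}$, a cyclic decomposition $\mu^{(j)}=\mu^{(j)}_1+\dots+\mu^{(j)}_{k_j}$ with $T$ cyclically permuting the $\mu^{(j)}_i$. On each cyclic piece, $\LSRB^{k_j}$ acquires a genuine spectral gap on $\HH_p^{t,t_-}$, and exponential decay of correlations for H\"older observables $\varphi,\psi$ follows by writing
  \begin{equation*}
  \int (\psi\circ T^{k_j n})\,\varphi\, k_j\,\de\mu^{(j)}_i - \Bigl(\int \psi\, k_j\,\de\mu^{(j)}_i\Bigr)\Bigl(\int\varphi\, k_j\,\de\mu^{(j)}_i\Bigr) = \langle (\LSRB^{k_j n}-P)(\varphi h^{(j)}_i),\psi\rangle,
  \end{equation*}
where $h^{(j)}_i$ is the density of $k_j\mu^{(j)}_i$ and $P$ is the rank-one spectral projection. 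That H\"older functions multiply $\HH_p^{t,t_-}$ continuously (because $t+|t_-|<\alpha$) and pair with it against Lebesgue closes this step.

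The genuinely delicate point, and the reason Appendix \ref{sec:SRB} is needed, is to show that the basins of the $\mu^{(j)}$ cover Lebesgue-almost all of $X_0$. The strategy exploits what is distinctive about the space $\HH_p^{t,t_-}$: by Lemma \ref{lem:multiplier}, under the conditions $0<t<1/p$ and $1/p-1<t_-<0$, the indicator $\mathbf{1}_A$ of any Borel set $A\subset X_0$ with sufficiently regular boundary lies in $\HH_p^{t,t_-}$. Decomposing $\mathbf{1}_A$ along the spectral projections of $\LSRB$ gives
  \begin{equation*}
  \LSRB^n \mathbf{1}_A = \sum_{j,i} c_{j,i}(A)\, h^{(j)}_i + R_n, \qquad \norm{R_n}{\HH_p^{t,t_-}}=O(\theta^n), \quad \theta<1,
  \end{equation*}
and pairing against an arbitrary bounded continuous test function yields Ces\`aro-averaged convergence of the push-forwards $T^n_*(\Leb|_A)$ towards a combination of the $\mu^{(j)}_i$. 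The abstract argument in Appendix \ref{sec:SRB} then upgrades this functional-analytic convergence to the pointwise Birkhoff-type statement that Lebesgue-a.e.\ $x\in A$ belongs to the basin of some $\mu^{(j)}$; taking $A$ arbitrary of positive Lebesgue measure, $\Leb$-a.e.\ point of $X_0$ is in some basin. This is the step where the stability of $\HH_p^{t,t_-}$ under multiplication by characteristic functions of nice sets is decisive; the previous two steps would go through with any reasonable anisotropic Banach space yielding quasi-compactness and admitting a duality with H\"older test functions.
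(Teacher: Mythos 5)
Your outline captures the broad strategy (quasi-compactness, peripheral spectrum, abstract basin-covering), but several of its internal steps do not survive contact with the specific difficulties the paper is navigating, and one essential ingredient is missing entirely. First, the claim that the leading eigenfunctions are ``non-negative densities'' yielding ``absolutely continuous ergodic invariant probabilities'' is wrong in this setting: $\HH_p^{t,t_-}$ is a space of distributions, the physical measures of a piecewise hyperbolic map are generically singular with respect to Lebesgue, and ``positivity of $\LSRB$'' has no obvious meaning on a distribution space. The paper instead pairs the Ces\`aro averages $\frac1n\sum_{i<n}\lambda^{-i}\LSRB^i f$ against smooth test functions, invokes Riesz representation on the compact $X_0$ to attach a measure $\mu_u$ to each peripheral eigendistribution $u$, and shows these measures are absolutely continuous with respect to the \emph{reference} measure $\mu=\mu_{\Pi_1 f_0}$ --- not with respect to Lebesgue. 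Likewise the Schauder--Tychonoff step you insert to produce an a.c.i.p.\ with density in $\HH_p^{t,t_-}$ is neither available (no positive cone) nor needed: $\Pi_1 f_0\neq 0$ is seen directly by pairing with the constant function $1$.

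Second, and more seriously, you do not address the point the paper itself flags as the crux of the deduction: proving that the limit measure $\mu$ is $T$-invariant. The formal manipulation $\langle\Pi_1 f_0,g\circ T\,\dLeb\rangle=\int g\circ T\,\de\mu$ is not legitimate because $g\circ T$ is not smooth, and the Riesz identity for $\Pi_1 f_0$ is established only against smooth (or continuous) densities. This is precisely why Theorem~\ref{thm:SRBabstrait} carries its fifth hypothesis --- that any weak limit of Ces\`aro-averaged pushforwards gives zero mass to the discontinuity set of $T$ --- and why Lemma~\ref{deduce} (which crucially uses that $1/p-1<t_-\leq 0\leq t<1/p$, so that multiplication by characteristic functions of nice sets is bounded on $H_p^{t,t_-}$) must be verified for the present space. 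Example~\ref{ex2} is exactly the counterexample one hits when this check fails. Your plan cites ``classical positivity analysis as in \cite{BKL,GL1}'' for this step, but the paper explicitly warns that those arguments rest on properties of stable manifolds that are not established here; appealing to them without addressing how $\mu$ interacts with the singularity set leaves a genuine gap, and consequently the identification of the measures and the basin-covering argument that you build on top of it are not yet secured.
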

The deduction of this theorem from Corollary \ref{cor:BoundSRB}
is essentially folklore, but the proofs of similar results in
the literature (e.g.~in \cite{BKL,DL}) rely on some properties
of stable manifolds that are not established in our setting. We
prove in Appendix ~ \ref{sec:SRB} a  general
theorem (Theorem~\ref{thm:SRBabstrait})
that guarantees the existence of finitely many physical
measures whenever the transfer operator has a spectral gap on a
space of distributions, and show (Lemma~\ref{deduce})
that this general theorem holds in our setting. The interest of this argument is that
it also applies to non hyperbolic situations, such as
(perturbations of the operators in) \cite{Tsujii}.

The results in this subsection answer the question in \cite[Remark 1.1]{baladi:Cinfty},
in a much more general framework.

\subsection{Hyperbolic maps with smooth unstable distribution}
\label{exchangeus}
Just like in Definition \ref{def:PiecHyp}, we can define
piecewise $C^{1+\alpha}$ hyperbolic maps with smooth unstable
distribution. Our results also apply to such maps (by the same
techniques used to prove Theorem \ref{thm:MainSpectralThm}),
but on the space of distributions $\tilde \HH^{t_+,t}$ whose
norm is given in charts by $\norm{\FF^{-1} ((1+|\xi|^2)^{t_+/2}
(1+|\xi|^2+|\eta|^2)^{t/2} \FF u)}{L_p}$. More precisely:

\begin{thm}[Spectral theorem for smooth unstable distributions]
\label{thm:smooth_unstable} Let $\alpha\in (0,1]$. Let $T$ be a
piecewise $C^{1+\alpha} $hyperbolic map with smooth unstable
distribution, satisfying the weak transversality condition with
$E^s$ replaced by $E^u$. Let
$1<p<\infty$ and let $t_+$, $t$ be so that $1/p-1<t<0<t_+<1/p$,
$t+t_+>0$ and $|t|+t_+<\alpha$.

Let $g:X_0\to \C$ be a function such that the restriction of
$g$ to any $O_i$ admits a $C^{\alpha}$ extension to
$\overline{O_i}$. Define an operator $\Lp_g$ acting on bounded
functions by $(\Lp_g u)(x)=\sum_{Ty=x} g(y)u(y)$. Then $\Lp_g$
acts continuously on $\tilde\HH_p^{t_+,t}$. Moreover, its
essential spectral radius is at most
  \begin{equation*}
  \lim_{n\to\infty}
  (D_n^b)^{1/(pn)} \cdot
  (D_n^e)^{(1/n)(1-1/p)}
  \cdot \norm{g^{(n)}|\det DT^n|^{1/p}\max(\lambda_{u,n}^{-(t+t_+)},
  \lambda_{s,n}^{-t})}{L_\infty}^{1/n}.
  \end{equation*}

In particular, if
  \begin{equation*}
  \lim_{n\to\infty}
  (D_n^b)^{1/(np)}\cdot
  (D_n^e)^{(1/n)(1-1/p)} \cdot \norm{\max(\lambda_{u,n}^{-(t+t_+)},
  \lambda_{s,n}^{-t})|\det
  DT^n|^{1/p-1}}{L_\infty}^{1/n} <1,
  \end{equation*}
then the spectral radius of $\LSRB$ acting on
$\tilde\HH_p^{t_+,t}$ is $<1$. This implies that $T$ has a
finite number of ergodic physical measures whose basins cover
Lebesgue almost all $X_0$. Moreover, if $\mu$ is one of these
measures, there exist an integer $k$ and a decomposition
$\mu=\mu_1+\dots+\mu_k$ such that $T$ sends $\mu_j$ to
$\mu_{j+1}$ for $j\in \Z/k\Z$, and the probability measures
$k\mu_j$ are exponentially mixing for $T^k$ and H\"{o}lder test functions.
\end{thm}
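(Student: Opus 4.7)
The proof runs in parallel with that of Theorem \ref{thm:MainSpectralThm}, exchanging the roles of the stable and unstable directions (equivalently, of the coordinates $\xi$ and $\eta$) throughout. The plan is first to develop in $\R^d$ a local theory of the space $\tilde H_p^{t_+,t}$, whose defining Fourier weight $(1+|\xi|^2)^{t_+/2}(1+|\xi|^2+|\eta|^2)^{t/2}$ differs from that of $H_p^{t,t_-}$ only by the swap $\xi\leftrightarrow\eta$ and the reordering of the two factors. Every Fourier-multiplier and complex-interpolation argument of Section \ref{sec:local} transfers verbatim under this swap. In particular this yields: (a) invariance of $\tilde H_p^{t_+,t}$ under $C^{1+\alpha}$ diffeomorphisms whose derivatives preserve the ``unstable leaves'' $\R^{d_u}\times\{0\}$, provided $|t|+t_+<\alpha$; and (b) the analogue of Lemma \ref{lem:multiplier}: stability of $\tilde H_p^{t_+,t}$ under multiplication by characteristic functions of sets with sufficiently regular boundary, provided $1/p-1<t<0<t_+<1/p$. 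These are exactly the numerical constraints in the statement.

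I would then globalize: define $\tilde\HH_p^{t_+,t}$ on the manifold by choosing $C^{1+\alpha}$ charts whose derivatives now send $E^u$ to $\R^{d_u}\times\{0\}$ (instead of sending $E^s$ to $\{0\}\times\R^{d_s}$), glue the local pieces via a partition of unity as in Definition \ref{def:spaceX}, and deduce chart independence from the invariance (a).

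The core of the argument consists in running the Lasota--Yorke machinery of Sections 4--5 for $\Lp_g$ on $\tilde\HH_p^{t_+,t}$. Decomposing $\Lp_g$ according to the inverse branches $T_i^{-1}$, one estimates three operations on $\tilde H_p^{t_+,t}$ in charts: multiplication by the pulled-back $C^\alpha$ weight $g\circ T_i^{-1}\cdot|\det DT_i^{-1}|$, composition with the $C^{1+\alpha}$ diffeomorphism $T_i^{-1}$ (which, after coordinate change, preserves the unstable leaves), and multiplication by the characteristic function of $T(O_i)$ (controlled by (b)). Each of these is the direct analogue of an estimate proved in Sections 4--5, with the weak transversality condition on $E^u$ providing exactly the same combinatorial control on the way the leaves $\R^{d_u}\times\{0\}$ are cut by the discontinuity hypersurfaces. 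Combining the resulting Lasota--Yorke inequality with the compact embedding $\tilde\HH_p^{t_+,t}\hookrightarrow\tilde\HH_p^{t'_+,t'}$ for $t'_+<t_+$, $t'<t$ and applying Hennion's theorem yields the stated bound on the essential spectral radius, with the expansion/contraction factor $\max(\lambda_{u,n}^{-(t+t_+)},\lambda_{s,n}^{-t})$ arising from the behaviour of the weight $(1+|\xi|^2)^{t_+/2}(1+|\xi|^2+|\eta|^2)^{t/2}$ under the derivative cocycle (the total exponent in the unstable direction is $t+t_+$, in the stable direction it is $t$).

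For the last statement, the hypothesis on $\LSRB$ brings the spectral radius on $\tilde\HH_p^{t_+,t}$ below $1$, and the physical-measures conclusion then follows from Theorem \ref{thm:SRBabstrait} in Appendix \ref{sec:SRB}, once one checks (as in Lemma \ref{deduce}) that the abstract hypotheses of that theorem are met for $\tilde\HH_p^{t_+,t}$; density of bounded functions in $\tilde\HH_p^{t_+,t}$ and the exclusion of measures supported on hypersurfaces (by the multiplier Lemma's analogue) are proved as in the original setting. The main obstacle is really bookkeeping rather than anything conceptual: one must verify that every ingredient of Sections 4--5 that exploited the alignment of charts with $E^s$ (the composition estimate, the zooming localization, the treatment of the characteristic functions) still functions when $E^s$ is replaced by $E^u$ and when the factor carrying the negative smoothness index is the \emph{global} weight rather than the transverse one; this is the point where the asymmetry between $\HH_p^{t,t_-}$ and $\tilde\HH_p^{t_+,t}$ has to be handled with care.
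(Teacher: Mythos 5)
Your proposal is correct and follows exactly the route the paper takes: the paper itself offers no further details, stating only that the result ``follows from the techniques used in the proof of Theorem~\ref{thm:MainSpectralThm}'', and your elaboration—rederiving the Section~4 estimates for the weight $(1+|\xi|^2)^{t_+/2}(1+|\xi|^2+|\eta|^2)^{t/2}$ with charts aligned to $E^u$, then running the zooming Lasota--Yorke argument of Section~5 and Theorem~\ref{thm:SRBabstrait}—is what that sentence asks for. The only small imprecision is the phrase ``differs \ldots only by the swap $\xi\leftrightarrow\eta$'': as Remark~\ref{remark7} notes, the passage from $\HH_p^{t,t_-}$ to $\tilde\HH_p^{t_+,t}$ also involves flipping the signs of the global and single-direction exponents (so it is a swap \emph{plus} a duality), but you implicitly account for this by writing down the correct parameter ranges and the correct factor $\max(\lambda_{u,n}^{-(t+t_+)},\lambda_{s,n}^{-t})$, so no harm is done.
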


We will not give further details on the proof of this theorem,
since it follows from the techniques used in the proof of
Theorem \ref{thm:MainSpectralThm}.

Finally, similar results hold for maps that have at the same
time smooth stable and unstable distributions (and satisfy the
weak transversality condition in both directions), as follows.
Let $\tilde{\tilde{\HH}}_p^{t_+,t_-}$ be the space of
distributions whose norm is given in charts by $\norm{\FF^{-1}
((1+|\xi|^2)^{t_+/2} (1+|\eta|^2)^{t_-/2} \FF u)}{L_p}$.

\begin{thm}[Spectral theorem when both distributions are smooth]\label{both}
Let $T$ be a piecewise $C^{1+\alpha}$
hyperbolic map with smooth stable and unstable distribution,
satisfying the weak transversality conditions for $E^s$ and $E^u$
for $\alpha\in (0,1]$. Let $1<p<\infty$
and let $t_+$, $t_-$ be so that $1/p-1<t_-<0<t_+<1/p$, and
$|t_-|+t_+<\alpha$.

Let $g:X_0\to \C$ be a function such that the restriction of
$g$ to any $O_i$ admits a $C^{\alpha}$ extension to
$\overline{O_i}$. Define an operator $\Lp_g$ acting on bounded
functions by $(\Lp_g u)(x)=\sum_{Ty=x} g(y)u(y)$. Then $\Lp_g$
acts continuously on $\tilde{\tilde{\HH}}_p^{t_+,t_-}$.
Moreover, its essential spectral radius is at most
  \begin{equation}\label{thebest}
  \lim_{n\to\infty}
  (D_n^b)^{1/(pn)} \cdot
  (D_n^e)^{(1/n)(1-1/p)}
  \cdot \norm{g^{(n)}|\det DT^n|^{1/p}\max(\lambda_{u,n}^{-t_+},
  \lambda_{s,n}^{-t_-})}{L_\infty}^{1/n}.
  \end{equation}
\end{thm}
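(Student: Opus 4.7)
The plan is to follow the proof of Theorem \ref{thm:MainSpectralThm}, replacing the anisotropic weight $(1+|\xi|^2+|\eta|^2)^{t/2}(1+|\eta|^2)^{t_-/2}$ by the fully decoupled weight $(1+|\xi|^2)^{t_+/2}(1+|\eta|^2)^{t_-/2}$, which is adapted to the entire hyperbolic splitting. This is made possible by the simultaneous $C^{1+\alpha}$ regularity of $E^s$ and $E^u$: since $C^u(x)+E^s(x)=\T_x X$ the two distributions are transverse, and one can find a finite $C^{1+\alpha}$ atlas in which $E^u$ is sent to $\R^{d_u}\times \{0\}$ and $E^s$ to $\{0\}\times \R^{d_s}$ simultaneously. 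Using such charts and a subordinate partition of unity, one defines $\tilde{\tilde{\HH}}_p^{t_+,t_-}$ on the manifold by the analog of Definition \ref{def:spaceX}.

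First, one verifies for this local space the three basic bounds of Section 4: boundedness of multiplication by a $C^\alpha$ function; boundedness of composition by a $C^{1+\alpha}$ diffeomorphism (after zooming, approximated by a linear map preserving the splitting $\R^{d_u}\oplus \R^{d_s}$); and boundedness of multiplication by the characteristic function of a set whose boundary satisfies the weak transversality condition. The last point, the direct analog of Lemma \ref{lem:multiplier}, requires precisely $1/p-1<t_-<0<t_+<1/p$ and $|t_-|+t_+<\alpha$. Since the decoupled weight factorizes as the tensor product of a weight in $\xi$ alone and a weight in $\eta$ alone, the complex interpolation argument of \cite{Str}, as used in Section 4, goes through in each factor independently, reducing each step to the standard Sobolev case $t_+=t_-=0$.

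The heart of the proof is the Lasota-Yorke inequality. Writing $\Lp_g^n u$ as a sum over cylinders $O_\ii$ and working in zoomed adapted charts, one approximates $DT_\ii^{-1}$ near a point by a diagonal map $\mathrm{diag}(A_u,A_s)$ with $\nor{A_u^{-1}}\leq \lambda_{u,n}^{-1}$ and $\nor{A_s}\leq \lambda_{s,n}$. On the decoupled norm, such a rescaling contributes exactly the factor $\max(\lambda_{u,n}^{-t_+},\lambda_{s,n}^{-t_-})$, since a dilation by $\mu$ in a coordinate weighted with Sobolev exponent $s$ produces $\mu^{-s}$. Combining with the Jacobian $|\det DT^n|^{1/p}$ from the change of variables, the multiplier bound for the weight $g^{(n)}$, the characteristic-function bound for $\mathbf{1}_{O_\ii}$ (whose boundary is controlled by the weak transversality condition for both $E^s$ and $E^u$), and the complexity factors $(D_n^b)^{1/p}(D_n^e)^{1-1/p}$ arising from controlled $L_p$ overlap of the cylinder indicators, one gets
\begin{equation*}
\norm{\Lp_g^n u}{\tilde{\tilde{\HH}}_p^{t_+,t_-}}\leq (A_n+\epsilon)\norm{u}{\tilde{\tilde{\HH}}_p^{t_+,t_-}} + C_n \norm{u}{\tilde{\tilde{\HH}}_p^{t_+',t_-'}},
\end{equation*}
where $A_n$ is the quantity in \eqref{thebest} and $t_+'<t_+$, $t_-'>t_-$ give a weaker norm with compact embedding. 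Hennion's theorem then yields the claimed bound on the essential spectral radius.

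The main technical obstacle is the same zooming argument as in the proof of Theorem \ref{thm:MainSpectralThm}: the adapted charts, the partition of unity, and the linearization of $T_\ii$ are only $C^{1+\alpha}$, so one must choose the zooming scale small enough that the linearization error of size $\mathrm{diam}(O_\ii)^\alpha$ does not spoil the sharp factor $\max(\lambda_{u,n}^{-t_+},\lambda_{s,n}^{-t_-})$, while keeping the complexity factors accumulated with the correct exponents $1/p$ and $1-1/p$. The gain compared with Theorem \ref{thm:MainSpectralThm} is conceptual: here the weight is truly adapted to both invariant directions, so the two contraction/expansion rates appear separately in the final bound instead of being coupled through the combined exponent $t+t_-$.
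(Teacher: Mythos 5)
Your proposal is correct and follows exactly the route the paper indicates (Theorem \ref{both} is stated to ``follow from the techniques used to prove Theorem \ref{thm:MainSpectralThm}''): rerun the three local bounds of Section~4 and the zooming/Lasota--Yorke argument of Section~5 with the fully decoupled weight $(1+|\xi|^2)^{t_+/2}(1+|\eta|^2)^{t_-/2}$, in charts that straighten \emph{both} invariant distributions, so that the first step of Lemma~\ref{lem:CompositionDure} yields $\max(\mu_u^{t_+},\mu_s^{t_-})$ with independent exponents while the Fubini-plus-interpolation proofs of Lemmas~\ref{Leib} and~\ref{lem:multiplier} apply symmetrically in $\xi$ and $\eta$. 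One small slip that does not affect the argument: your diagonal-block inequalities $\nor{A_u^{-1}}\leq \lambda_{u,n}^{-1}$ and $\nor{A_s}\leq\lambda_{s,n}$ have the inverses on the wrong side --- what is actually needed (and supplied by Lemma~\ref{lem:iterate}) is $\mu_u=\nor{A_u}\leq \Cs\lambda_{u,n}^{-1}$ and $\mu_s=\nor{A_s^{-1}}^{-1}\geq \Cs^{-1}\lambda_{s,n}^{-1}$, matching the hypotheses of Lemma~\ref{lem:CompositionDure} and producing exactly the factor $\max(\lambda_{u,n}^{-t_+},\lambda_{s,n}^{-t_-})$ in \eqref{thebest} (similarly, the constraint $|t_-|+t_+<\alpha$ enters in Lemma~\ref{Leib}, not in the multiplier Lemma~\ref{lem:multiplier}, which needs only $1/p-1<t_-\le 0\le t_+<1/p$).
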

The results on physical measures follow analogously. It should
be noted that the results of Theorem~ \ref{both} are stronger
than Theorems~ \ref{thm:MainSpectralThm} and~
\ref{thm:smooth_unstable}, since the exponents $t_+$ and $t_-$
appear independently in the estimate \eqref{thebest}.

Once again, this theorem follows from the techniques we will
use to prove Theorem \ref{thm:MainSpectralThm}.


\section{Examples}
\label{sec:examples}

Let us look at some applications of our results to $\LSRB$.

\subsection{General examples}

\begin{ex}
\label{ex2} On $[-1,1]\times \{0,1\}$, let $T(x,j)=(x/2,j)$ if
$x\not=0$, and $T(0,j)=(0,1-j)$. This fits in our framework.
Since the complexities $D^b_n$ and $D^e_n$ are always equal to
$2$, Theorem~ \ref{thm:MainSpectralThm} gives the following
bound for the essential spectral radius of $\LSRB$ on the
classical Sobolev space $\HH_p^{t_-}$:
  \begin{equation}
  \lim_{n \to \infty} \norm{\lambda_{s,n}^{-t_-}|\det DT^n|^{1/p-1}}{L_\infty}^{1/n}
  =2^{t_-+1-1/p}.
  \end{equation}
Since $t_-<0$ is restricted by $t_->1/p-1$, this bound is $>1$,
hence useless. This is not surprising since the physical
measure, the Dirac mass at $0$, does not belong to
$\HH_p^{t_-}$ if $1/p-1<t_-<0$ (see Remark \ref{nodirac}).

This was to be expected since the conclusion of
Theorem~\ref{thm:ExistSRB} is false: the map $T$ has two
physical measures, the Dirac masses at $(0,0)$ and $(0,1)$, but
these measures are not invariant!

It is nevertheless interesting to see where precisely our
arguments fail. Let $\tilde T(x,j)=(x/2,j)$, then the transfer
operators associated to $T$ and $\tilde T$ acting on
distributions coincide on $C^\infty$ functions (since the
difference at $0$ is not seen by the integration against smooth
functions). Since $\tilde T$ is continuous, there is no
truncation term in its transfer operator, hence the results of
Theorem \ref{thm:MainSpectralThm} hold for the full range
$t_-<0$, without the restriction $t_->1/p-1$ (with the same
proof). In particular, for $t_-=-1$ and $p=2$, we get a bound
$1/\sqrt{2}$ for the essential spectral radius of $\Lp_{1/\det
DT}(T)=\Lp_{1/\det D\tilde T}(\tilde T)$ acting on
$\HH_2^{-1}$, and Corollary \ref{cor:BoundSRB} holds. The
problem comes up in the deduction of the properties of physical
measures from this bound on the essential spectral radius of
$\LSRB$: we need to check that the physical measures do not
give weight to the discontinuities of the map, to apply Theorem
\ref{thm:SRBabstrait}. This is ensured by Lemma \ref{deduce}
when $t_->1/p-1$, but does not hold for $t_-=-1$ and $p=2$.
\end{ex}

\begin{ex}
Assume that $d_s=0$, i.e., $T$ is piecewise expanding. In this
case, we can take $\lambda_s=0$, and the value of $t_-$ is
irrelevant (in fact, the space $\HH_p^{t,t_-}$ does not depend on
$t_-$, and is the classical Sobolev space $\HH_p^t$).

\begin{prop*}
If $T$ is piecewise $C^{2}$, if $d_s=0$ and $\lim
\norm{\lambda_{u,n}^{-1}}{L_\infty}^{1/n} \cdot \lim
(D_n^b)^{1/n} <1$, then there exist $0<t<1/p<1$ such that the
spectral radius of $\LSRB$ acting on $\HH_p^t$ is $<1$. In
particular, Theorem \ref{thm:ExistSRB} applies.
\end{prop*}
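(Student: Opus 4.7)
My plan is to deduce the proposition from Corollary~\ref{cor:BoundSRB}, applied to $\LSRB=\Lp_g$ with $g=1/|\det DT|$, on the space $\HH_p^{t,t_-}$. Since $d_s=0$ this space is the classical Sobolev space $\HH_p^t$ and does not depend on $t_-$; the weak transversality condition is vacuously true (there are no vectors in $E^s$ to be $L$-generic), and the factor $\max(\lambda_{u,n}^{-t},\lambda_{s,n}^{-(t+t_-)})$ in~\eqref{eq:qlksdjfml} reduces to $\lambda_{u,n}^{-t}$, the stable factor being absent.

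\textbf{Reducing the bound.} First substitute $g^{(n)}|\det DT^n|^{1/p}=|\det DT^n|^{1/p-1}$. Since $d_s=0$, every singular value of $DT^n$ is at least $\lambda_{u,n}$, so $|\det DT^n|\geq\lambda_{u,n}^d$, and hence $|\det DT^n|^{1/p-1}\leq\lambda_{u,n}^{d(1/p-1)}$ (recall $1/p-1<0$). Writing $\Lambda=\lim_n\norm{\lambda_{u,n}^{-1}}{L_\infty}^{1/n}$, $\delta_b=\lim(D_n^b)^{1/n}$, and $\delta_e=\lim(D_n^e)^{1/n}$ (automatically finite, since trivially $\delta_e\leq I$), the quantity in~\eqref{eq:qlksdjfml} is then dominated by
\[
\phi(p,t):=\delta_b^{1/p}\,\delta_e^{\,1-1/p}\,\Lambda^{t+d(1-1/p)}=(\delta_b\Lambda)^{1/p}\,(\delta_e\Lambda^d)^{1-1/p}\,\Lambda^{t-1/p}.
\]

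\textbf{Choosing the parameters.} By hypothesis $\delta_b\Lambda<1$, whereas the base $\delta_e\Lambda^d$ is merely finite. Letting $p\to 1^+$ together with $t\nearrow 1/p$, the exponent $1-1/p$ on $\delta_e\Lambda^d$ tends to $0$, the exponent on $\delta_b\Lambda$ tends to $1$, and $\Lambda^{t-1/p}\to 1$; thus $\phi(p,t)\to\delta_b\Lambda<1$. By continuity, for $(p,t)$ chosen close enough to $(1,1)$ with $0<t<1/p<1$, the condition~\eqref{eq:qlksdjfml} is met, and then Theorem~\ref{thm:ExistSRB} applies, giving the conclusion.

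\textbf{Main obstacle.} The delicate point is that the statement of Theorem~\ref{thm:MainSpectralThm} carries the extra restrictions $1/p-1<t_-<0$ and $t+t_-<0$, which together force $t<1-1/p$; for $p$ close to $1$, this would preclude $t$ from being close to $1$ and wreck the scheme above. However, when $d_s=0$ the constraint $t+t_-<0$ is introduced in the proof of Theorem~\ref{thm:MainSpectralThm} only to make the (absent) stable factor $\lambda_{s,n}^{-(t+t_-)}=0^{-(t+t_-)}$ equal to zero; in the absence of a stable direction this factor is simply not present, so a direct specialization of the proof shows that for $d_s=0$ only the restrictions $0<t<\min(1/p,\alpha)$ survive, which is exactly the range quoted in the proposition. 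This liberation of $t$ from the $1-1/p$ ceiling is what permits the $p\to 1^+$, $t\nearrow 1/p$ limit to be carried out.
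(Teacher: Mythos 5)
Correct, and essentially the paper's argument: both apply Corollary~\ref{cor:BoundSRB} with $d_s=0$ (so $\HH_p^{t,t_-}=\HH_p^t$) and send $(p,t)\to(1,1)$ with $0<t<1/p$, using $|\det DT^n|\geq\lambda_{u,n}^d$, so that the bound tends to $\lim_n\norm{\lambda_{u,n}^{-1}}{L_\infty}^{1/n}\cdot\lim_n(D_n^b)^{1/n}<1$. Your explicit discussion of why the constraint $t+t_-<0$ (which would otherwise force $t<1-1/p$ and block the limit) drops out when $d_s=0$ is a useful clarification: the paper's choice $p=(1-\epsilon)^{-1}$, $t=1-2\epsilon$ relies on the same observation but leaves it implicit in the remark that ``the value of $t_-$ is irrelevant.''
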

\begin{proof}
When $\epsilon$ tends to $0$, the bound on the essential
spectral radius of $\LSRB$ acting on
$\HH_{(1-\epsilon)^{-1}}^{1-2\epsilon}$, given by Corollary
\ref{cor:BoundSRB}, converges at most to $\lim_{n \to \infty}
\norm{\lambda_{u,n}^{-1}}{L_\infty}^{1/n} \cdot \lim_{n \to \infty}
(D_n^b)^{1/n}$. Hence, it is $<1$ for small enough $\epsilon$.
\end{proof}
\end{ex}
In the proof of the above proposition, we use parameters $t$ and $p$ very close to $1$,
but
we are ``morally" working with $\HH_1^1$. This is not surprising
since this space is essentially a space of functions with one
derivative in $L_1$, i.e., a space of functions of bounded
variation. It is well known that functions of bounded variation
are useful to study piecewise expanding maps, see \cite{Co}.
This proposition is analogous to results proved in
\cite{Sau,Co} for different Banach spaces.

\begin{ex}
\label{ex5} When $\det DT=1$ and $D^e_n$, $D^b_n$ grow
subexponentially fast, then it is clear from Corollary
\ref{cor:BoundSRB} that the essential spectral radius of
$\LSRB$ is $<1$ on any space $\HH_p^{t,t_-}$ (as soon as $t>0$
and $t+t_-<0$). In some situations, it is possible to weaken
(or even remove) the assumption that $\det DT=1$. We get more
precise results using Theorem \ref{thm:smooth_unstable}, i.e.,
assuming that the unstable direction is smooth.

\begin{prop*}
Let $T$ be a piecewise $C^{2}$ hyperbolic map with smooth
unstable distribution satisfying the weak transversality
condition, and such that $D^e_n$ and $D^b_n$ grow
subexponentially. Assume that there exist $N>0$ and $\gamma<1$
such that $\lambda_{s,N}\leq \gamma |\det DT^N|$. Then there
exist $p\in (1,\infty)$ and $1/p-1<t<0<t_+<1/p$ such that the
essential spectral radius of $\LSRB$ acting on
$\tilde\HH_p^{t_+,t}$ is $<1$. In particular, $T$ has finitely
many physical measures whose basins contain  Lebesgue almost
every point.
\end{prop*}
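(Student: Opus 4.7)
The plan is to apply Theorem~\ref{thm:smooth_unstable} and pick $p,t,t_+$ making its bound on the essential spectral radius strictly less than $1$. Write $J_n = |\det DT^n|$. Subexponential growth of $D_n^b,D_n^e$ sends the complexity factors to $1$. Since $\norm{\max(A,B)}{L_\infty}=\max(\norm{A}{L_\infty},\norm{B}{L_\infty})$ and $(a_n+b_n)^{1/n}\to\max(\lim a_n^{1/n},\lim b_n^{1/n})$, it suffices to verify
\[
\limsup_{n\to\infty} \norm{\lambda_{s,n}^{-t}\,J_n^{1/p-1}}{L_\infty}^{1/n} < 1
\quad\text{and}\quad
\limsup_{n\to\infty} \norm{\lambda_{u,n}^{-(t+t_+)}\,J_n^{1/p-1}}{L_\infty}^{1/n} < 1.
\]

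For the stable factor, iterating the hypothesis $\lambda_{s,N}\le\gamma J_N$ along the orbit (using submultiplicativity of $\lambda_{s,\cdot}$ and multiplicativity of $J_\cdot$) gives $\lambda_{s,kN}(x)\le\gamma^k J_{kN}(x)$ pointwise. Setting $|t|=-t>0$ and $\delta:=1-1/p-|t|>0$, this yields
\[
\lambda_{s,kN}^{|t|}\,J_{kN}^{1/p-1} \le \gamma^{k|t|}\,J_{kN}^{-\delta}.
\]
Because $|\det DT|$ is continuous and nonvanishing on each compact $\overline{O_i}$, the constant $m_0:=\inf_{X_0}|\det DT|$ is positive, so $J_{kN}\ge m_0^{kN}$, and the $(kN)$-th root of the $L_\infty$ norm is bounded by $\gamma^{|t|/N}m_0^{-\delta}$. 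This quantity is $<1$ once $\delta$ is small (vacuously if $m_0\ge 1$, otherwise for $\delta<|t||\log\gamma|/(N|\log m_0|)$). Submultiplicativity of $n\mapsto\norm{\lambda_{s,n}^{|t|}J_n^{1/p-1}}{L_\infty}$ propagates the bound to all $n$. The visible obstacle is that this argument wants $|t|$ near $1-1/p$ (so that $\delta$ is small), while the unstable factor will push in the opposite direction.

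For the unstable factor, uniform hyperbolicity provides $\Lambda>1$ with $\lambda_{u,n}\ge\Lambda^n$, so the $n$-th root of the $L_\infty$ norm is at most $\Lambda^{-(t+t_+)}m_0^{-(1-1/p)}$, which is $<1$ iff $(t+t_+)\log\Lambda>(1-1/p)|\log m_0|$ (again automatic if $m_0\ge 1$). To reconcile the two demands, take $p\in(1,2)$ close to $1$, $|t|=1-1/p-\delta$ and $t_+=1/p-\delta'$ with $0<\delta'\le\delta$ small. Then $t+t_+ = 2/p-1+\delta-\delta'\to 1$ while $1-1/p\to 0$ as $p\to 1^+$, so the unstable inequality holds for $p$ sufficiently close to $1$; simultaneously the stable condition on $\delta$ is satisfied and all the parameter constraints of Theorem~\ref{thm:smooth_unstable} ($1/p-1<t<0<t_+<1/p$, $t+t_+>0$, $|t|+t_+<\alpha=1$) are verified. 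The statement on physical measures is then a direct consequence of the corresponding conclusion in Theorem~\ref{thm:smooth_unstable}.
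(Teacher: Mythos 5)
Your proof is correct and follows essentially the same route as the paper's: apply Theorem~\ref{thm:smooth_unstable} with $p$ close to $1$, $t$ close to $1/p-1$, and $t_+$ close to $1/p$, using the pinching hypothesis for the stable factor and letting $p\to 1^+$ for the unstable factor. The only cosmetic differences are that you introduce explicit constants $m_0=\inf|\det DT|$ and $\Lambda$ and iterate the pinching over $k$ blocks of length $N$, whereas the paper works directly at scale $N$ and invokes submultiplicativity of the full expression (and uses a single small parameter $\epsilon$ rather than your $\delta,\delta'$); these do not change the substance of the argument.
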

The assumption $\lambda_{s,N}\leq \gamma |\det DT^N|$ is a kind
of pinching condition. It is satisfied whenever $d_s=1$ and
$d_u>0$.

\begin{proof}
We will take $p$ very close to $1$, $t=1/p-1+\epsilon$ and
$t_+=1/p-\epsilon$ for $\epsilon>0$ very small.

We have
  \begin{equation}
  |\det DT^N|^{1/p-1} \lambda_{s,N}^{-t}
  \leq (\gamma^{-1}\lambda_{s,N})^{1/p-1} \lambda_{s,N}^{-(1/p-1)-\epsilon}
  = \gamma^{1-1/p} \lambda_{s,N}^{-\epsilon}.
  \end{equation}
Since $\gamma<1$, this quantity is $<1$ if $\epsilon$ is small
enough (in terms of $p$).

Moreover,
  \begin{equation}
  |\det DT^N|^{1/p-1} \lambda_{u,N}^{-(t_++t)}
  = |\det DT^N|^{1/p-1} \lambda_{u,N}^{1-2/p}.
  \end{equation}
When $p\to 1$, this quantity converges to
$\lambda_{u,N}^{-1}<1$.

Hence, it is possible to choose $p$ and $\epsilon$ such that
  \begin{equation}
  \norm{ |\det DT^N|^{1/p-1} \max( \lambda_{s,N}^{-t},
  \lambda_{u,N}^{-(t+t_+)})}{L_\infty}<1.
  \end{equation}
This concludes the proof.
\end{proof}
\end{ex}


\subsection{Piecewise linear maps}
\label{pwaff}

In this paragraph, we describe an explicit class of maps for
which the assumptions of the previous theorems are satisfied.
Let $A$ be a $d\times d$ matrix with no eigenvalue of modulus
$1$. It acts on $\R^d$ in a hyperbolic way, with best
expansion/contraction constants $\lambda_u>1$ and
$\lambda_s<1$. Let $X_0$ be a polyhedral region of $\R^d$, and
define a map $T$ on $X_0$ by cutting it into finitely many
polyhedral subregions $O_1,\dots,O_N$, applying $A$ to each of
them, and then mapping $AO_1,\dots, AO_N$ back into $X_0$ by
translations.

Let $J(n)$ be the covering multiplicity of $T^n$, i.e., the
maximal number of preimages of a point under $T^n$. It is
submultiplicative, hence the limit $J=\lim_{n \to \infty} J(n)^{1/n}$ exists.

\begin{prop}
\label{prop:Affine} The map $T$ is a piecewise hyperbolic map
with smooth stable and unstable distributions (given by the
eigenspaces of $A$ corresponding to eigenvalues of modulus
$<1$, resp.~$>1$). It satisfies the weak transversality
conditions for both stable and unstable distributions.
Moreover, if $J\lambda_s <|\det A|$, there exist $1<p<\infty$,
and $t_+$, $t_-$ so that $1/p-1 <t_- <0 < t_+ < 1/p$ and such
that the essential spectral radius of $\Lp_{1/\det|DT|}$ acting
on $\tilde{\tilde{\HH}}_p^{t_+,t_-}$ is $<1$. Therefore, $T$
satisfies the conclusions of Theorem \ref{thm:ExistSRB}.
\end{prop}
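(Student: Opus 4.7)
The plan is to verify the hypotheses of Theorem~\ref{both} and optimise the bound~\eqref{thebest} under the hypothesis $J\lambda_s<|\det A|$. On each $O_i$ the map $T$ coincides with $y\mapsto Ay+c_i$, which extends smoothly to a neighbourhood of $\overline{O_i}$; the eigenspace decomposition of $A$ supplies constant (hence $C^\infty$) stable and unstable distributions $E^s,E^u$ and constant invariant cones around them, with extremal eigenvalue moduli $\lambda_s<1<\lambda_u$ as contraction/expansion rates. Thus $T$ is a piecewise $C^{1+\alpha}$ hyperbolic map with smooth stable \emph{and} unstable distributions for any $\alpha<1$.

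For weak transversality of $E^s$ (the argument for $E^u$ is identical), each $K\subset\partial O_i$ lies in an affine hyperplane $H$, and so does any mild enlargement $K'\subset H$. Fix $x\in K'\setminus\partial K'$ and $a\in E^s(x)=E^s$. Any vector field $v$ valued in $E^s$ with $v(x)=a$ equals the constant field $a$ (since $E^s$ is constant), so its integral curves are straight lines parallel to $a$. If $a\notin \T_x K'$, each such line meets the hyperplane $H$ in at most one point, and $a$ is $1$-generic. If $a\in \T_x K'$, integral lines through points off $H$ are parallel to $H$ and miss it entirely, so a.e.\ integral line intersects $K'$ in $0$ points and $a$ is $L$-generic with $L=0$. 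In either case the set of non-generic $a\in E^s$ lies in the proper subspace $E^s\cap \T_x K'$, which has Lebesgue measure zero in $E^s$.

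Now apply Theorem~\ref{both} with $g=1/|\det DT|$. Because $DT\equiv A$, we have $|\det DT^n|=|\det A|^n$, $\lambda_{u,n}=\lambda_u^n$, $\lambda_{s,n}=\lambda_s^n$, and $g^{(n)}|\det DT^n|^{1/p}=|\det A|^{n(1/p-1)}$. By definition $D_n^e=J(n)$, so $(D_n^e)^{1/n}\to J$. The crucial subexponential estimate is for $D_n^b$: the boundary of the level-$n$ partition sits inside $O(n)$ affine hyperplanes (the original boundary hyperplanes together with their preimages under the affine iterates), and the number of cells of an arrangement of $k$ affine hyperplanes meeting at a single point is polynomial in $k$; hence $D_n^b=O(n^{d-1})$ and $(D_n^b)^{1/n}\to 1$. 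The bound~\eqref{thebest} then reduces to
\[
\rho(p,t_+,t_-)\;:=\;J^{\,1-1/p}\,|\det A|^{1/p-1}\,\max\bigl(\lambda_u^{-t_+},\lambda_s^{-t_-}\bigr).
\]

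To make $\rho<1$, I would take $t_+=1/p-\epsilon$ and $t_-=1/p-1+\epsilon$ for small $\epsilon>0$. The stable contribution has logarithm $(1-1/p)\log(J\lambda_s/|\det A|)+O(\epsilon)$, strictly negative for every $p>1$ by the hypothesis $J\lambda_s<|\det A|$. The unstable contribution has logarithm $(1-1/p)\log(J/|\det A|)-(1/p)\log\lambda_u+O(\epsilon)$; this is negative automatically when $J\leq|\det A|$, and when $|\det A|<J<|\det A|/\lambda_s$ it is negative as soon as $1/p$ exceeds the threshold $\log(J/|\det A|)/\log(J\lambda_u/|\det A|)$, which lies in $(0,1)$ because $\lambda_u>1$. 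An admissible triple $(p,t_+,t_-)$ therefore exists, the essential spectral radius of $\Lp_{1/|\det DT|}$ on $\tilde{\tilde{\HH}}_p^{t_+,t_-}$ is $<1$, and the physical-measure statement follows via the analogue of Theorem~\ref{thm:ExistSRB} delivered by Theorem~\ref{both}. I expect the subexponential bound $(D_n^b)^{1/n}\to 1$ to be the main technical point: the naive bound $(D_n^b)^{1/n}\leq J$ would only yield the stronger hypothesis $J<|\det A|$ rather than the assumed $J\lambda_s<|\det A|$.
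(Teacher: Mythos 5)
Your proof follows the same route as the paper's: verify the hypotheses of Theorem~\ref{both} (smooth constant distributions, weak transversality, polynomial complexity bounds), then optimise the bound \eqref{thebest} by taking $t_+=1/p-\epsilon$, $t_-=1/p-1+\epsilon$ and $p$ close to $1$. You give more explicit detail on the optimisation (including the intermediate regime $|\det A|<J<|\det A|/\lambda_s$ and its threshold for $1/p$) and a self-contained hyperplane-counting argument for $D_n^b$, whereas the paper cites a claim from Buzzi; both yield polynomial, hence subexponentially negligible, $D_n^b$. Two small inaccuracies are worth flagging, though neither affects the conclusion. First, it is not true that a vector field $v$ valued in a constant subspace $E^s$ with $v(x)=a$ is \emph{identically} equal to $a$: when $d_s\ge2$, $v$ can rotate within $E^s$, and its integral curves need not be straight. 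What is true, and what suffices, is that the integral curves stay inside the stable leaves $y_0+E^s$. When $a\notin \T_x K'=H_0$, the transversal component of $v$ is nonzero near $x$, so integral curves cross $H$ at most once and $a$ is $1$-generic, as you claim. When $a\in H_0\cap E^s$, the curves can in general cross $H$ more than once, but this set of exceptional $a$ is a proper subspace of $E^s$ (hence null) \emph{provided} $E^s\not\subset H_0$. Your sentence ``In either case the set of non-generic $a\in E^s$ lies in the proper subspace $E^s\cap\T_xK'$'' silently assumes $E^s\not\subset H_0$; in the degenerate case $E^s\subset H_0$ one needs the separate observation that a.e.\ stable leaf is then entirely disjoint from $H$, so a.e.\ integral curve misses $K'$ and \emph{every} $a$ is $0$-generic. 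Second, the identity $D_n^e=J(n)$ is not quite ``by definition'': $D_n^e$ counts multiplicities of the \emph{closures} $\overline{T^n(O_\ii)}$, which can exceed the number of genuine preimages at boundary points. The paper correspondingly proves only $D_n^e\le 2J(n)(nK)^d$, which is all one needs since the polynomial correction disappears in the limit $(D_n^e)^{1/n}\to J$.
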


As an example of such a map, one can take
$A=\left(\begin{matrix} 2&1\\1&1
\end{matrix}\right)$. Cutting the torus $\mathbb{T}^2$ into
finitely many squares, applying $A$ to each of these squares,
and then permuting the images of the squares, one obtains a
bijection of the torus (for which $J=1$). Hence,
Proposition~ \ref{prop:Affine} applies. The novelty with respect to previous works
such as \cite{Yo, Ch, DL}
is that the sides of the squares can be taken parallel to the
stable or unstable directions.

\begin{proof}[Proof of Proposition \ref{prop:Affine}]
The weak transversality conditions are  direct consequences of
the definitions.

Let $K$ be the total number of the sides of the polyhedra
$O_i$. Around any point $x$, the boundaries of the sets
$O_{(i_0,\dots,i_{n-1})}$ are preimages of theses sides by one
of the maps $A,\dots,A^{n-1}$, which gives at most $nK$
possible directions. Hence, the claim p.~ 105 in \cite{Bu}
gives $D_n^b \leq 2(nK)^d$. This quantity grows
subexponentially. In the same way, $D_n^e \leq 2 J(n) (nK)^d$.

By Theorem \ref{both}, the essential spectral
radius of $\Lp_{1/\det A}$ acting on
$\tilde{\tilde{\HH}}_p^{t_+,t_-}$ (for suitable values of
$p,t_+,t_-$) is bounded by $J^{1-1/p}|\det A|^{1/p-1}
\max(\lambda_u^{-t_+}, \lambda_s^{-t_-})$. Let us take
$t_+=1/p-\epsilon$, $t_-=1/p-1+\epsilon$ and $p$ close to $1$.
Then $1/p-1<t_-<0<t_+<1/p$, hence Theorem~
\ref{both} applies and yields the following
bound for the essential spectral radius:
  \begin{equation}
  |\det A|^{1/p-1} J^{1-1/p} \max(\lambda_u^{-1/p+\epsilon}, \lambda_s^{1-1/p-\epsilon}).
  \end{equation}
If $p$ is close to $1$ and $\epsilon$ is small enough, this
quantity is $<1$ under the assumptions of the proposition.
(Note that if $\det A=J=1$, choosing $p=1/2$ and
$t_+=1/2-\epsilon$, $t_-=-1/2+\epsilon$ gives better bounds.)
\end{proof}

The standard conservative (piecewise affine) baker's map on the
unit square is given by $T(x,y)= (2x, y/2)$ for $0\le x < 1/2$
and $T(x,y)=(2x-1, (y+1)/2)$ for $1/2\le  x \le 1$.  It fits in
the model of this subsection, for a diagonal matrix $A$ with
eigenvalues $2$ and $1/2$. The baker has an obvious Markov
partition with two pieces, and can thus be analyzed by a
(Lipschitz) symbolic model, which gives an essential
decorrelation rate of $2^{-1/2}$ for Lipschitz observables.
(The physical measure is just Lebesgue measure.) The proof of
the previous proposition gives a bound $2^{-1/2+\epsilon}$  for
the essential spectral radius of $\Lp_{1/\det A}$ on $\tilde
{\tilde \HH}_2^{1/2- \epsilon,-1/2 +\epsilon}$ for arbitrarily
small $\epsilon >0$ (here $J=1$, $\det A=1$, $\lambda_u=2$ and
$\lambda_s=1/2$). For a dissipative baker $T(x,y)= (2x, y/3)$
for $0\le x < 1/2$ and $T(x,y)=(2x-1, (y+2)/3)$ for $1/2\le x
\le 1$ ($\lambda_u=2$ and $\lambda_s=1/3$, $\det A=2/3$ and
$J=1$), the proof of the above proposition  gives a bound $
2^{-1+\epsilon+ (\log 3/\log 6)}$ for the essential spectral
radius on $\tilde {\tilde \HH}_p^{1/p- \epsilon,1/p-1
+\epsilon}$ for $p=\log 6/\log 3$. (Note that the dimension of
the attractor is strictly between $1$ and $2$ in this case.)
The above two examples are piecewise affine hyperbolic maps
with a finite Markov partition. But the following variant, that
we  shall call a ``sloppy baker," does not have a finite Markov
partition:  let $(a,b)$ be a point in the interior of the unit
square and put $T(x,y)= (2x+a, y/2+b) \mod 1$ for $0\le x <
1/2$ and $T(x,y)=(2x-1+a, (y+1)/2+b) \mod 1$ for $0\le x <1$.
For almost all $(a,b)$, the sloppy baker does not have a finite
Markov partition. However, our estimate gives the same bound
$2^{-1/2+\epsilon}$  for the essential spectral radius on
$\tilde {\tilde \HH}_2^{1/2- \epsilon,-1/2 +\epsilon}$.
Similarly, one may consider a dissipative sloppy baker, and we
recover the same estimates.


\section{Tools of functional analysis}

In this section, we  recall some classical notions of functional
analysis (interpolation theory and properties
of Triebel spaces), that  will be useful in the next sections to study
the space $H_p^{t,t_-}$ and to prove our main result.

\subsection{Complex interpolation}

We first recall some notations and definitions from the
classical complex interpolation theory of Lions, Calder\'{o}n and
Krejn (see e.g. \cite{TrB}). A pair $(\BB_0, \BB_1)$ of Banach
spaces is called an interpolation couple if they are both
continuously embedded in a linear Hausdorff space $\BB$. For
any interpolation couple $(\BB_0, \BB_1)$, we let $L(\BB_0,
\BB_1)$ be the space of all linear operators $\LL$ mapping
$\BB_0+\BB_1$ to itself so that $\LL|_{\BB_j}$ is continuous
from $\BB_j$ to itself for $j=0,1$. For an interpolation couple
$(\BB_0, \BB_1)$ and $0 < \theta < 1$, we denote by $[\BB_0,
\BB_1]_\theta$ the complex interpolation space of parameter
$\theta$. We recall the definition: set $S= \{ z\in \complex
\st 0 < \Re z < 1\}$, and introduce the normed vector space
\begin{align*}
F(\BB_0, \BB_1)= \{& f : S \to \BB_0 + \BB_1, \mbox{ analytic,
extending continuously to } \overline S ,\\
\nonumber & \mbox{  with }\sup_{z \in \overline S}
\norm{f(z)}{\BB_0+\BB_1} < \infty,
\mbox{ and }\\
\nonumber &t \mapsto f(j+it)\mbox{ is continuous
from } (-\infty, \infty)\mbox { to }\BB_j , j=0, 1 ,\\
\nonumber &\mbox{and }
 \norm{f}{F(\BB_0,\BB_1)}:=\max_{j=0, 1} ( \sup_t \norm {f(j+it)}{\BB_j}) < \infty \}.
\end{align*}
Then the complex interpolation space is defined for $\theta \in (0,1)$ by
  \begin{equation}
  [\BB_0, \BB_1]_\theta := \{ u \in \BB_0+\BB_1 \st \exists f \in
  F(\BB_0, \BB_1) \mbox{ with } f(\theta)=u\},
  \end{equation}
normed by
  \begin{equation}
  \label{InterpolationNorm}
  \norm{u}{[\BB_0, \BB_1]_\theta}=
  \inf_{f(\theta)=u} \norm{f} {F(\BB_0, \BB_1)}.
  \end{equation}

It is well-known (see e.g. \cite[\S 1.9]{TrB}) that $(\BB_0,
\BB_1)\mapsto [\BB_0, \BB_1]_\theta$ is an exact interpolation
functor of type $\theta$, in the following sense: for any
interpolation couple $(\BB_0, \BB_1)$ and every $\LL \in
L(\BB_0, \BB_1)$ we have
  \begin{equation}\label{interpp}
  \norm{\LL}{[\BB_0, \BB_1]_\theta \to [\BB_0,\BB_1]_\theta} \le
  \norm{\LL}{\BB_0 \to \BB_0}^{1-\theta} \norm{\LL}{\BB_1\to \BB_1}^{\theta} \,
  \quad \forall \, \theta \in (0,1).
  \end{equation}
The above bound will be used several times throughout this
work.

\subsection{A class of Sobolev-like spaces containing the local spaces $H^{t,t_-}_p$}

Let $S$ be the Schwartz space of $C^\infty$ rapidly decaying
functions. Its dual $S'$ is the space of tempered
distributions.

Let $M$ be the set of functions $a$ from $\R^d$ to $\R_+$ such
that there exists $C>0$ such that, for all multi-indices
$\gamma=(\gamma_1,\dots,\gamma_d)$ with $\gamma_j \in \{0,1\}$,
and all $\zeta\in \R^d$,
  \begin{equation}
  \left| \prod_{j=1}^d (1+\zeta_j^2)^{\gamma_j/2} D^\gamma a(\zeta)
  \right| \leq C a(\zeta).
  \end{equation}
For $a\in M$ and $p\in (1,\infty)$, let us define a space
$H_p^a$ as the space of all tempered distributions $u$ such
that $\FF^{-1}( a\FF u)$ belongs to $L_p$, with its canonical
norm
  \begin{equation}
  \label{def_norm_Triebel}
  \norm{u}{H_p^a}=\norm{\FF^{-1}( a\FF u)}{L_p(\R^d)}.
  \end{equation}
These spaces were introduced and studied by Triebel in
\cite{Tr}, in a slightly more general
setting involving another parameter
$q$ (under a different form \cite[Def. 2.3/4]{Tr}, but Theorem 5.1/2 and Remark 5.1 there
shows that it is equivalent to the previous description for $q=2$).

Among other things, Triebel proved the following  results
concerning these spaces:

\begin{lem}\label{14}
For any $a\in M$ and $1<p<\infty$, the space $S$ is contained in $H_p^a$, and
dense.
\end{lem}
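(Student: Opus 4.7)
The overall approach is to exploit the natural linear isometry $J_a : u \mapsto \FF^{-1}(a \FF u)$ from $H_p^a$ into $L_p$, which is an isometry by the very definition of the $H_p^a$-norm. The inclusion $S \subset H_p^a$ thus amounts to $J_a(S) \subset L_p$, which I would prove directly by integration by parts. For the density of $S$ in $H_p^a$, I would combine a Fourier-side regularization of $u$ with a physical-space cutoff, the latter step being the main technical obstacle.

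For $u \in S$, set $v := J_a u$. The $M$-condition applied with multi-indices $\gamma$ having a single $\gamma_j = 1$ yields $|\partial_j \log a(\zeta)| \le C(1+\zeta_j^2)^{-1/2}$, which integrates via a Gronwall-type argument to polynomial upper and lower bounds for $a$. In particular $a \FF u \in L_1$, so $v$ is bounded and continuous. To obtain $L_p$-decay of $v$, integrate by parts: for every multi-index $\gamma \in \{0,1\}^d$,
\[
z^\gamma v(z) = (-i)^{|\gamma|} \FF^{-1}\bigl(D^\gamma(a \FF u)\bigr)(z).
\]
Expanding $D^\gamma(a \FF u)$ by the Leibniz rule and bounding each factor $D^\beta a$ via the $M$-condition and each remaining derivative of $\FF u$ by a Schwartz seminorm of $u$ yields an $L_1$-bound on $D^\gamma(a \FF u)$. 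Hence $|z^\gamma v(z)|$ is uniformly bounded, and summing over the $2^d$ multi-indices $\gamma \in \{0,1\}^d$ gives $\prod_j (1+z_j^2)^{1/2} |v(z)| \le C$. Since $p>1$, the function $\prod_j (1+z_j^2)^{-1/2}$ lies in $L_p(\R^d)$ by Fubini, and therefore $v \in L_p$, i.e., $u \in H_p^a$.

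For density, take $u \in H_p^a$ with $v := J_a u \in L_p$. Fix $\psi \in C_c^\infty(\R^d)$ with $\psi(0)=1$ and define $u_k := \FF^{-1}(\psi(\cdot/k)\FF u)$: this is a smooth function of polynomial growth whose Fourier transform sits in a fixed compact set. Since $J_a u_k = v \ast \bigl(k^d(\FF^{-1}\psi)(k\,\cdot)\bigr)$ amounts to convolving $v$ with an approximate identity, we have $u_k \to u$ in $H_p^a$. Next fix $k$, choose $\eta \in C_c^\infty(\R^d)$ equal to $1$ near the origin, and set $u_{k,m} := \eta(\cdot/m) u_k$, which belongs to $C_c^\infty(\R^d) \subset S$; a diagonal extraction then finishes the proof, provided one shows $u_{k,m} \to u_k$ in $H_p^a$ as $m \to \infty$ for each fixed $k$. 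This cutoff step is the main obstacle, since $J_a$ mixes frequencies and positions. The key is that the band-limitation of $u_k$ allows one to represent $J_a u_k = K_k \ast u_k$ for a fixed convolution kernel $K_k := \FF^{-1}(a \tilde\psi_k)$ with $\tilde\psi_k \in C_c^\infty$ chosen equal to $1$ on $\supp \FF u_k$; estimating the resulting convolutions together with a Leibniz-type expansion for $J_a\bigl((1-\eta(\cdot/m))u_k\bigr)$ and applying dominated convergence then yields the required $\|(1-\eta(\cdot/m))u_k\|_{H_p^a} \to 0$.
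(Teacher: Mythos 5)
The paper disposes of this lemma by a citation to Triebel (Theorem~3.2/2 and Remark~3.2/2 in [Tr]), so your from-scratch argument is genuinely a different route. Your first half — the inclusion $S \subset H_p^a$ via integration by parts, the Leibniz rule, and the observation that the $M$-condition forces polynomial upper and lower bounds on $a$ (so that $a\FF u \in L_1$ and each $D^\gamma(a \FF u) \in L_1$ for $\gamma \in \{0,1\}^d$, giving $|v(z)| \lesssim \prod_j(1+z_j^2)^{-1/2} \in L_p$ for $p>1$) — is correct and complete. The Fourier-side regularization $u_k = \FF^{-1}(\psi(\cdot/k)\FF u)$ is also fine: since $J_a u_k = v * (k^d(\FF^{-1}\psi)(k\cdot))$ is convolution of $v\in L_p$ with an approximate identity, $u_k \to u$ in $H_p^a$.

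The gap is in the final cutoff step. You invoke the representation $J_a u_k = K_k * u_k$ with $K_k = \FF^{-1}(a\tilde\psi_k)$, valid because $\tilde\psi_k \equiv 1$ on $\supp\FF u_k$, and then wave at a ``Leibniz-type expansion'' to conclude $\|(1-\eta(\cdot/m))u_k\|_{H_p^a}\to 0$. But the function $(1-\eta(\cdot/m))u_k$ is \emph{not} band-limited: multiplying by $1-\eta(\cdot/m)$ spreads the Fourier support beyond $\supp\tilde\psi_k$, so $J_a\bigl((1-\eta(\cdot/m))u_k\bigr) \ne K_k * \bigl((1-\eta(\cdot/m))u_k\bigr)$, and there is no Leibniz rule for the nonlocal operator $J_a$. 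One can still push the argument through by splitting $a = a\tilde\psi_k + a(1-\tilde\psi_k)$: the first piece gives $K_k*(\theta_m u_k)\to 0$ by Young's inequality and dominated convergence, while the second piece requires a quantitative tail estimate on $(1-\tilde\psi_k)\,[\hat\eta_m * \FF u_k]$ (using that $\hat\eta_m$ concentrates and that $\FF u_k$ is a compactly supported distribution of finite order), and this real work is what the sketch omits. The cleanest fix is to avoid the issue entirely: take any $\varphi_m\in C_c^\infty$ with $\varphi_m\to u_k$ in $L_p$ (possible since $u_k\in L_p$), and \emph{re-band-limit} by setting $\tilde\varphi_m := \FF^{-1}(\tilde\psi_k\FF\varphi_m)\in S$. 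Then $J_a(u_k-\tilde\varphi_m)=K_k*(u_k-\varphi_m)$ exactly, so $\|u_k - \tilde\varphi_m\|_{H_p^a}\le\|K_k\|_{L_1}\|u_k-\varphi_m\|_{L_p}\to 0$, and $S$ is dense.
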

\begin{proof}
This is proved in Theorem 3.2/2 and Remark 3.2/2 in \cite{Tr}.
\end{proof}

For $t$, $t_-\in \R$, the function
$a_{t,t_-}(\xi,\eta)=(1+|\xi|^2+|\eta|^2)^{t/2}
(1+|\eta|^2)^{t_-/2}$ belongs to $M$. Then $H_p^{t,t_-}$ from
Definition~ \ref{def:space} is just $H_p^{a_{t,t_-}}$, and the
previous lemma says that $S$ is dense in $H_p^{t,t_-}$.

\begin{prop}[Interpolation]
\label{Triebinterpol} For any $a_0$, $a_1 \in M$, $p_0$, $p_1\in
(1,\infty)$ and $\theta \in (0,1)$, the interpolation space
$[H_{p_0}^{a_0}, H_{p_1}^{a_1}]_\theta$ is equal to $H_p^a$ for
$a=a_0^{1-\theta} a_1^\theta$ and
$1/p=(1-\theta)/p_0+\theta/p_1$.
\end{prop}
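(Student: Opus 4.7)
The plan is to reduce the assertion to the classical complex interpolation $[L_{p_0}, L_{p_1}]_\theta = L_p$ (with $1/p = (1-\theta)/p_0 + \theta/p_1$) by transporting everything via the Fourier multiplier isomorphism. The fundamental observation is that for any $b\in M$ and any $q\in(1,\infty)$, the map
\[
J_b: H_q^b \longrightarrow L_q, \qquad J_b u = \FF^{-1}(b\,\FF u),
\]
is an isometric isomorphism by the very definition \eqref{def_norm_Triebel}. So the proposition is equivalent to saying that $J_{a_0}$ and $J_{a_1}$, viewed as operators on the interpolation couple, intertwine the scales $(H_{p_j}^{a_j})_{j=0,1}$ and $(L_{p_j})_{j=0,1}$ in a compatible way at the parameter $\theta$.

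To prove the continuous inclusion $H_p^a \hookrightarrow [H_{p_0}^{a_0},H_{p_1}^{a_1}]_\theta$, I would proceed by construction. Given $u\in S \cap H_p^a$ (dense by Lemma~\ref{14}), set $g = J_a u \in L_p$. Since $[L_{p_0},L_{p_1}]_\theta = L_p$, there exists $h\in F(L_{p_0},L_{p_1})$ with $h(\theta)=g$ and $\|h\|_{F(L_{p_0},L_{p_1})}\leq 2\|g\|_{L_p}$. Then define the candidate analytic family
\[
f(z) = \FF^{-1}\bigl( a_0^{-(1-z)} a_1^{-z}\, e^{\delta(z^2-\theta^2)}\, \FF h(z)\bigr),
\]
where the Gaussian factor $e^{\delta(z^2-\theta^2)}$ (for small $\delta>0$) is inserted to guarantee integrability on the vertical strips. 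Clearly $f(\theta)=u$. At the boundary $z=j+it$ one computes
\[
J_{a_j} f(j+it) = \FF^{-1}\!\bigl( (a_0/a_1)^{\pm it} e^{\delta((j+it)^2-\theta^2)} \FF h(j+it)\bigr),
\]
so membership in $H_{p_j}^{a_j}$ reduces to the statement that $m_t := (a_0/a_1)^{it}$ is a Fourier multiplier on $L_{p_j}$ with operator norm growing at most polynomially in $|t|$.

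The main obstacle is precisely this multiplier bound. I would verify it by checking that $m_t$ satisfies the Marcinkiewicz-type condition encoded in the definition of $M$: from $a_0,a_1\in M$ and the Leibniz rule one gets by induction, for multi-indices $\gamma$ with $\gamma_k\in\{0,1\}$, an estimate
\[
\Bigl| \prod_k (1+\zeta_k^2)^{\gamma_k/2} D^\gamma m_t(\zeta)\Bigr| \leq C(1+|t|)^{|\gamma|},
\]
since each differentiation either lands on the exponent (producing a factor $it$ and $D^{\gamma'}\!\log(a_0/a_1)$, which is bounded by the defining inequality of $M$) or on a previously differentiated logarithmic factor (again controlled by $M$). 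The Marcinkiewicz multiplier theorem then yields $\|m_t\|_{L_{p_j}\to L_{p_j}} \leq C(1+|t|)^{d}$; combined with the Gaussian decay this makes $f\in F(H_{p_0}^{a_0},H_{p_1}^{a_1})$ with norm $\lesssim \|u\|_{H_p^a}$, so by \eqref{InterpolationNorm} we obtain the desired embedding. The reverse inclusion is symmetric: given $u$ in the interpolation space realized by some $f\in F(H_{p_0}^{a_0},H_{p_1}^{a_1})$, apply $J_{a_0^{1-z}a_1^z}$ (again using that $(a_0/a_1)^{it}$ is a bounded $L_{p_j}$ multiplier) to produce an analytic family in $F(L_{p_0},L_{p_1})$ realizing $J_a u$ at $z=\theta$, whence $u\in H_p^a$ with controlled norm. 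As noted after Definition~\ref{def:space}, this is essentially the content of Theorem~5.1/2 in \cite{Tr}, so one may alternatively just quote it.
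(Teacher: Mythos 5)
Your proof is correct in substance, but it takes a different route from the paper: the paper's proof is a one-line citation of \cite[Theorem 4.2/2]{Tr}, whereas you reconstruct the complex-interpolation argument from scratch, Stein-style, by transporting to the Lebesgue scale via the isometries $J_b$ and building analytic families from the holomorphic multiplier $z\mapsto a_0^{-(1-z)}a_1^{-z}$. The crux, which you identify correctly, is the Marcinkiewicz bound on the imaginary-power multiplier: since $M$ is closed under products and under $a\mapsto 1/a$, the function $a_0/a_1$ lies in $M$, and your Leibniz-rule induction shows that the mixed partials $D^\gamma (a_0/a_1)^{it}$ with $\gamma_j\in\{0,1\}$ grow at most like $(1+|t|)^{|\gamma|}$ after the $(1+\zeta_j^2)^{\gamma_j/2}$ weighting; Theorem~\ref{thm:Marc} then turns this into an $L_{p_j}$-operator bound of order $(1+|t|)^{d}$, which the Gaussian factor $e^{\delta(z^2-\theta^2)}$ absorbs. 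What your approach buys is self-containment at the price of length; what the paper's buys is brevity at the price of outsourcing. Two small slips: the boundary multiplier is $(a_0/a_1)^{it}$ on both lines $\Re z=0$ and $\Re z=1$ (the $\pm$ in your exponent is spurious, though harmless), and the Gaussian factor is equally indispensable in the reverse inclusion, where the boundary multiplier norm again grows polynomially in $|t|$ and would otherwise prevent membership in $F(L_{p_0},L_{p_1})$. Finally, the alternative reference at the end should be \cite[Theorem 4.2/2]{Tr}; Theorem 5.1/2 of that paper, cited elsewhere in the present article, is the equivalence of Triebel's two descriptions of the scale for $q=2$, not the interpolation theorem.
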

\begin{proof}
This is \cite[Theorem 4.2/2]{Tr}.
\end{proof}

We will also use the following straightforward lemma.
(Note that if $a \in M$ then $1/a \in M$, see e.g.
\cite[Lemma 2.1/1]{Tr}).
\begin{lem}[Duality]
\label{lem:duality} For any $a\in M$ and $1<p<\infty$, the dual
of the space $H_p^a$ is $H_{p'}^{1/a}$ for $1/p+1/{p'}=1$.
\end{lem}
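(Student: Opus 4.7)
The plan is to exploit the Fourier multiplier $A_a\colon u\mapsto \FF^{-1}(a\FF u)$, which is tautologically an isometric isomorphism $H_p^a\to L_p$ by the definition \eqref{def_norm_Triebel}; the fact that $1/a\in M$ (noted just before the statement of the lemma) means $A_{1/a}$ is a bounded two-sided inverse and, applied in the dual range of exponents, furnishes an analogous isometric isomorphism $A_{1/a}\colon H_{p'}^{1/a}\to L_{p'}$. Via these two identifications, the classical duality $L_p^*\cong L_{p'}$ will be transported to the desired duality between $H_p^a$ and $H_{p'}^{1/a}$.

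To carry this out, I would introduce on Schwartz functions (dense in both spaces by Lemma~\ref{14}) the sesquilinear pairing $\langle u,v\rangle := \int u\,\overline{v}\,dx$. Two applications of Plancherel, inserting the trivial factor $a\cdot a^{-1}=1$, yield
$$\langle u,v\rangle \;=\; \int \FF u\cdot\overline{\FF v}\,d\xi \;=\; \int (a\FF u)\cdot\overline{(a^{-1}\FF v)}\,d\xi \;=\; \int (A_a u)\,\overline{(A_{1/a}v)}\,dx,$$
so H\"older's inequality in $L_p\times L_{p'}$ gives $|\langle u,v\rangle|\le \norm{u}{H_p^a}\norm{v}{H_{p'}^{1/a}}$. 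The pairing therefore extends by density to a continuous sesquilinear form on $H_p^a\times H_{p'}^{1/a}$, and each $v\in H_{p'}^{1/a}$ induces a continuous antilinear functional on $H_p^a$ of norm at most $\norm{v}{H_{p'}^{1/a}}$.

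To see that every continuous antilinear functional on $H_p^a$ arises from this pairing with a unique element of $H_{p'}^{1/a}$, I would take $\phi\in (H_p^a)^*$, precompose with the isomorphism $A_{1/a}\colon L_p\to H_p^a$ to get a continuous functional on $L_p$, and invoke the classical $L_p$--duality to represent it by a unique $g\in L_{p'}$ with $\norm{g}{L_{p'}}=\nor{\phi}$. Setting $v:=A_a g\in H_{p'}^{1/a}$ (characterized by $A_{1/a}v=g$), the identity above gives $\phi(u)=\langle u,v\rangle$ for every $u\in H_p^a$, and $\norm{v}{H_{p'}^{1/a}}=\norm{g}{L_{p'}}=\nor{\phi}$, so the map $v\mapsto \langle \cdot,v\rangle$ is a surjective isometry onto $(H_p^a)^*$.

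The argument is essentially bookkeeping; the only substantive inputs are Plancherel and the stability of the class $M$ under $a\mapsto 1/a$, the latter being exactly what keeps the dual in the same scale. I do not anticipate any genuine obstacle beyond tracking the Fourier convention through the two Plancherel steps.
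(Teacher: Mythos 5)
Your argument is correct and is precisely the ``straightforward'' argument the paper has in mind (it states Lemma~\ref{lem:duality} without proof, remarking only that $1/a\in M$). Conjugating the classical $L_p$--$L_{p'}$ duality by the isometric multiplier isomorphisms $A_a$ and $A_{1/a}$ is the canonical route, and your treatment of the density and surjectivity steps is complete.
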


\subsection{Multiplier theorems}

In order to understand the spaces $H_p^a$, an essential tool is
provided by  Fourier multiplier theorems. The
following Marcinkiewicz multiplier theorem (see
e.g.~\cite[Theorem 2.4/2]{Tr}) will be sufficient for our
purposes.
\begin{thm}
\label{thm:Marc} Let $b\in C^d(\R^d)$ satisfy $|\zeta^\gamma
D^\gamma b(\zeta)| \leq B$ for all multi-indices
$\gamma=(\gamma_1,\dots,\gamma_d)$ with $\gamma_j\in \{0,1\}$,
and all $\zeta\in \R^d$. Then, for all $p\in (1,\infty)$, there
exists a constant $C(p,d)$ such that, for any $u\in L_p$,
  \begin{equation}
  \norm{ \FF^{-1}( b\FF u)}{L_p} \leq CB \norm{u}{L_p}.
  \end{equation}
\end{thm}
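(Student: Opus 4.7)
The plan is to reduce the statement to the dyadic Littlewood--Paley decomposition in product form together with the vector-valued square-function characterization of the $L_p$ norm for $1<p<\infty$. First, I would split $\R^d$ into its $2^d$ open octants and, on each one, choose a smooth partition of unity $\{\varphi_k\}_{k\in\Z^d}$ whose elements are supported in slightly enlarged dyadic rectangles $R_k=\prod_{j=1}^{d}\{2^{k_j-1}<|\zeta_j|<2^{k_j+1}\}$ and which behaves nicely (is constant of the form $\varphi(2^{-k}\zeta)$ up to the octant structure). Setting $\Delta_k u=\FF^{-1}(\varphi_k\,\FF u)$, the product Littlewood--Paley theorem, which is a $d$-fold iteration of the one-dimensional square function inequality for the Hilbert transform, yields
\[
\norm{u}{L_p}\asymp\norm{\Bigl(\sum_{k\in\Z^d}|\Delta_k u|^2\Bigr)^{1/2}}{L_p}
\]
uniformly in $u$, for each $1<p<\infty$.

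Next, I would exploit the assumption $|\zeta^\gamma D^\gamma b(\zeta)|\leq B$ on each fixed rectangle $R_k$. Working in the positive octant and writing $b$ as an iterated integral of its mixed partial $\partial_1\cdots\partial_d b$ anchored at the corner of $R_k$ closest to the origin, the bound $|\zeta_1\cdots\zeta_d\,\partial_1\cdots\partial_d b|\leq B$ shows that $b$ is pointwise bounded by $CB$ on $R_k$; more generally, every mixed derivative of $b$ of order at most $d$ restricted to $R_k$, after normalization by the corresponding side-lengths of $R_k$, is controlled by $CB$. Translated back via scaling by $2^k$, this means that the Fourier multiplier with symbol $\varphi_k b$ can be realized as a convolution with a kernel whose Calder\'on--Zygmund-type norm (and in particular whose $L_p\to L_p$ operator norm) is $\leq CB$ uniformly in $k$.

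The final step is to reassemble: write $\FF^{-1}(b\FF u)=\sum_k\FF^{-1}(\varphi_k b\,\FF u)$, apply the square function on the left, and pass the uniform-in-$k$ $L_p$ bound on each block through a Fefferman--Stein vector-valued inequality adapted to the product structure. One clean route is a randomization via Khintchine: replace the square function by the $L_p$-norm of a random sum $\sum_k \epsilon_k \Delta_k u$, move the multiplier inside using the uniform-in-$k$ scalar bound from the previous paragraph, then randomize back. The main obstacle is precisely this last step: the scalar Fefferman--Stein bound must be upgraded to the product Littlewood--Paley setting, which forces an iteration of the one-dimensional square function inequality in each coordinate with careful control of constants. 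This iteration is standard but technically delicate, and it is the only genuinely analytic step beyond the setup; all the remaining ingredients (dyadic decomposition, Marcinkiewicz hypothesis, reconstruction) are essentially bookkeeping.
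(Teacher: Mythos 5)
The paper does not prove this theorem at all: it cites Triebel (\cite[Theorem 2.4/2]{Tr}), so there is no ``paper's proof'' to compare against. Your outline is the classical Stein--Grafakos route, but the crucial reassembly step, which you correctly flag as the only genuine analytic content, is not correctly set up and as written it has a gap.

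The issue is in how you propose to get the vector-valued bound
\begin{equation*}
\norm{\Bigl(\sum_{k}|T_k f_k|^2\Bigr)^{1/2}}{L_p}\;\lesssim\; B\,\norm{\Bigl(\sum_{k}|f_k|^2\Bigr)^{1/2}}{L_p},
\end{equation*}
where $T_k$ is the multiplier with symbol $\varphi_k b$. You argue that each $T_k$ has a convolution kernel with uniform Calder\'on--Zygmund-type norm after rescaling, and then invoke a Fefferman--Stein-type inequality. Two problems. First, the Fefferman--Stein / Benedek--Calder\'on--Panzone vector-valued theorem promotes a \emph{single} operator to an $\ell^2$-valued one; here you have a \emph{family} $\{T_k\}_k$, so you would have to view $\mathrm{diag}(T_k)$ as one operator-valued kernel and verify the (isotropic) H\"ormander condition for it. Second, and this is the real obstruction, the kernels $K_k$ arise from $K_0$ by an \emph{anisotropic} rescaling $x\mapsto(2^{k_1}x_1,\dots,2^{k_d}x_d)$, and the isotropic H\"ormander integral $\sup_y\int_{|x|>2|y|}|K_k(x-y)-K_k(x)|\,dx$ is \emph{not} uniformly bounded over $k\in\Z^d$ when the $k_j$ are wildly different. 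So the ``uniform CZO'' picture does not survive the product structure. Your fallback, the Khintchine randomization, is also circular: $\sum_k\epsilon_k T_k$ is again a Marcinkiewicz-type multiplier, so bounding it is precisely the theorem you want.

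The classical argument bypasses this by representing $b$ on each $R_k$ not as a rescaled smooth bump but as a superposition of characteristic functions of axis-parallel rectangles, via the iterated fundamental theorem of calculus with weight $\partial_1\cdots\partial_d b$ (plus boundary terms), the hypothesis controlling $\int_{R_k}|\partial_1\cdots\partial_d b|\le CB$. Characteristic functions of axis-parallel rectangles are compositions of translated one-dimensional Hilbert transforms, whose $\ell^2$-vector-valued boundedness on $L_p$ is elementary and, crucially, \emph{anisotropy-free}: the bound is independent of the rectangle. This is the mechanism you are missing. An equivalent route is to prove the $d=1$ case first and then iterate the \emph{whole} multiplier theorem coordinate by coordinate, treating the remaining variables as parameters (a genuinely vector-valued one-dimensional statement), rather than iterating only the square-function inequality as you propose. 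Either repair would complete the argument; as it stands, the CZO + Fefferman--Stein reassembly does not close.
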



\section{Towards Lasota-Yorke bounds on the local space $H_p^{t,t_-}$}
\label{sec:local}

Aiming at the proof of Theorem \ref{thm:MainSpectralThm} on
transfer operators, we describe in Subsections
\ref{subsec:mult} and \ref{subsec:comp}  how the local spaces
$H_p ^{t,t_-}$, which are  the building blocks of our spaces of
distributions, behave under  multiplication by a smooth
function or by the characteristic function of a nice set, as
well as under composition with a smooth map preserving the
stable leaves. Then, in Subsection~ \ref{locall}, we state and
prove a localization principle on $H_p^{t,t_-}$ that we were
not able to find in the literature and which plays a key part
in the ``zooming" procedure in the proof of Theorem
\ref{thm:MainSpectralThm}. Note for further use that since
$X_0$ is compact, \cite[Lemma 2.2]{baladi:Cinfty} (e.g.) gives
that the inclusion $\HH_p^{t,t_-}\subset \HH_p^{t',t'_-}$ for
$t' \leq t$ and $t'_- \leq t_-$ is compact if $t' < t$.

To study $H_p^{t,t_-}$, we will mainly study $H_p^{t,0}$ and
$H_p^{0,t_-}$ and use interpolation (via Proposition
\ref{Triebinterpol}). It is therefore useful to recall some
classical properties of these spaces.

When $t\geq 0$, the space $H_p^{t}$ is the classical Sobolev
space. By \cite[Theorem I.4.1]{Str}, it satisfies a Fubini
property: if $u$ is a function on $\R^d$, define a function
$u_j$ on $\R^{d-1}$ as follows:
$u_j(x_1,\dots,x_{j-1},x_{j+1},\dots,x_d)$ is the
$H_p^t(\R)$-norm of the restriction of $u$ to the line
$\{(x_1,\dots,x_{j-1},x,x_{j+1},\dots,x_d) \st x\in \R\}$. Then
$u$ belongs to $H_p^{t}(\R^d)$ if and only if each $u_j$
belongs to $L_p(\R^{d-1})$, and the norms $\norm{u}{H_p^{t}}$
and $\sum_{j=1}^d \norm{u_j}{L_p}$ are equivalent. (This is
true for any set of coordinates, but for simplicity we shall
use a fixed system of coordinates.) This makes it often
possible to study only the one-dimensional situation, and
extend it readily to $d$ dimensions.

For $t_->0$, the space $H_p^{0,t_-}$ also has a Fubini-type
property: the norm $\norm{u}{H_p^{0,t_-}}$ is equivalent to
$\sum_{j=d_u+1}^d \norm{u_j}{L_p}$ where $u_j$ is the
$H_p^{t_-}(\real)$-norm of a restriction of $u$ as above (the
proof of \cite[Theorem I.4.1]{Str} directly applies,  we may
take any coordinates on $\real^d$ which preserve the stable
leaves of the original coordinate system used to define
$H_p^{0,t_-}$, for simplicity we shall fix this original
coordinate system). In particular, the study of $H_p^{0,t_-}$
reduces to the study of the usual Sobolev space in one
dimension.

Finally, for $t_-\in \R$, the space $H_p^{0,t_-}$ also has a
slightly different Fubini-type property. Let $u$ be a function
on $\R^d$, and define a function $v$ on $\R^{d_u}$ as follows:
$v(x)$ is the $H_p^{t_-}(\R^{d_s})$-norm of the restriction of
$u$ to $\{x\}\times \R^{d_s}$. Then
$\norm{u}{H_p^{0,t_-}(\R^d)}=\norm{v}{L_p(\R^{d_u})}$: this
follows from the fact that the function $(1+|\eta|^2)^{t_-/2}$
does not depend on the variable $\xi$, which makes it possible
to integrate away the variable $x$ using the Fourier inversion
formula (see \cite[p.~1045]{Str} for details).

We will refer to these properties respectively as the
one-dimensional and the $d_s$-dimensional Fubini properties of
$H_p^{0,t_-}$.

\subsection{Multiplication by functions}
\label{subsec:mult}

\begin{lem}
\label{Leib} Let $t>0$, $t_-<0$ and $\alpha>0$ be real numbers
with $t+|t_-|<\alpha$. For any $p \in (1,\infty)$, there
exists a constant $\Cs$ such that for any $C^{\alpha}$ function
$g : \R^d \to \C$, for any distribution $u\in H_p^{t,t_-}$, the
distribution $gu$ also belongs to $H_p^{t,t_-}$ and satisfies
  \begin{equation*}
  \norm{ g \cdot u}{H_p^{t,t_-}}\le \Cs
  \|g\|_{C^{\alpha}} \norm{u}{H_p^{t,t_-}}.
  \end{equation*}
\end{lem}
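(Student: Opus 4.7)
The plan is to use complex interpolation (Proposition~\ref{Triebinterpol}) to reduce the claim to multiplication estimates at two endpoint spaces in which one of the two indices vanishes. Specifically, given $t>0$, $t_-<0$ with $t+|t_-|<\alpha$, I would choose $\theta \in (|t_-|/\alpha,\, 1-t/\alpha)$ (which is non-empty by hypothesis) and set $s = t/(1-\theta) \in (0,\alpha)$ and $s' = t_-/\theta \in (-\alpha, 0)$. With these choices, $a_{s,0}^{1-\theta} a_{0,s'}^{\theta} = a_{t,t_-}$, so Proposition~\ref{Triebinterpol} yields
\[
 [H_p^{s,0},\, H_p^{0,s'}]_\theta = H_p^{t,t_-}.
\]
The operator $M_g$ of multiplication by $g$ is well defined on tempered distributions (hence on $H_p^{s,0}+H_p^{0,s'}$), so by \eqref{interpp} it suffices to bound $M_g$ on each endpoint by $C\|g\|_{C^\alpha}$.

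The bound on $H_p^{s,0}$ (i.e., on the classical Sobolev space $H_p^s$ with $0<s<\alpha<1$) is a well-known fact from the theory of Bessel potential spaces; it can be established, for instance, via the difference-quotient characterization of $H_p^s$ combined with the Hölder continuity of $g$, or by paraproduct decomposition. I would quote this from \cite{Str} (or \cite[\S 4.6.3]{RS}).

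For the bound on $H_p^{0,s'}$ with $-\alpha < s' < 0$, I would use the $d_s$-dimensional Fubini property of $H_p^{0,s'}$: for $u \in H_p^{0,s'}$, one has $\|u\|_{H_p^{0,s'}(\R^d)} = \|x \mapsto \|u(x,\cdot)\|_{H_p^{s'}(\R^{d_s})}\|_{L_p(\R^{d_u})}$. Applied to $gu$, this reduces the problem to bounding multiplication by $g(x,\cdot)$ on $H_p^{s'}(\R^{d_s})$ uniformly in $x$. Since $\|g(x,\cdot)\|_{C^\alpha(\R^{d_s})} \le \|g\|_{C^\alpha(\R^d)}$ and multiplication by a $C^\alpha$ function is bounded on $H_p^{s'}(\R^{d_s})$ for $|s'|<\alpha$ (by the classical result plus duality via Lemma~\ref{lem:duality}, noting that the dual of a Bessel potential space is again a Bessel potential space with reflected exponent), this gives the desired uniform bound. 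Integrating in $x$ then yields $\|gu\|_{H_p^{0,s'}} \le C \|g\|_{C^\alpha} \|u\|_{H_p^{0,s'}}$.

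The main technical obstacle is the classical multiplication theorem on $H_p^s$ for non-integer $s < \alpha$, but since this is standard and referenced in the paper, the remaining work is bookkeeping: verifying the interpolation identity $a_{s,0}^{1-\theta} a_{0,s'}^\theta = a_{t,t_-}$ with a common value $1/p = (1-\theta)/p + \theta/p$, and then invoking \eqref{interpp}. A minor subtlety is the need for the two endpoint spaces to form a genuine interpolation couple inside the space of tempered distributions — but both embed continuously into $S'$, so this is automatic.
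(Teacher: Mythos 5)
Your proof is correct and mirrors the paper's argument: it reduces the claim by complex interpolation (Proposition~\ref{Triebinterpol}) to the classical Sobolev multiplication estimate on $H_p^{s}$ and a Fubini-in-$x$ argument on $H_p^{0,s'}$. Apart from your freedom in the choice of $\theta$ (the paper fixes $\theta = t/(t+|t_-|)$, $s=-s'=t+|t_-|$) and your route to the negative-exponent endpoint via duality rather than citing \cite[Corollary 4.2.2]{Trie} directly, the two proofs coincide.
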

The assertion $g u\in H_p^{t,t_-}$ should be interpreted as
explained after Theorem~\ref{thm:MainSpectralThm}.

\begin{proof}
Let $t^0=t+|t_-|$, $t^0_-=-t^0$ and $\theta=t/t^0$, so that
$(t,t_-)=(\theta t^0, (1-\theta) t^0_-)$ and $\max(t^0,
|t^0_-|)<\alpha$. We will write $H_p^{t,t_-}$ as an
interpolation space with parameter $\theta$ between $H_p^{t^0}$
and $H_p^{0,t^0_-}$, thereby reducing the proof to the study of
$H_p^{t^0}$ and $H_p^{0,t^0_-}$.

First, since $H_p^{t^0}$ is the classical Sobolev space,
\cite[Corollary 4.2.2]{Trie} shows that
  \begin{equation}
  \label{eq:BorneHpt}
  \norm{gu}{H_p^{t^0}} \leq \Cs \norm{g}{C^\alpha} \norm{u}{H_p^{t^0}},
  \end{equation}
where $\Cs$ depends only on $t^0$ and $\alpha$, whenever
$|t^0|<\alpha$.

Together with the $d_s$-dimensional Fubini-type property of
$H_p^{0,t^0_-}$, this readily implies
  \begin{equation}
  \label{eq:Mult0t}
  \norm{gu}{H_p^{0,t^0_-}} \leq \Cs\norm{g}{C^\alpha} \norm{u}{H_p^{0,t^0_-}}
  \end{equation}
whenever $|t^0_-|<\alpha$.

Interpolating between \eqref{eq:BorneHpt} and \eqref{eq:Mult0t}
via Proposition~\ref{Triebinterpol}, we get the conclusion of
the lemma.
\end{proof}

The following extension of a classical result of Strichartz is the key to our
results:

\begin{lem}
\label{lem:multiplier} Let $1<p<\infty$ and $1/p-1<t_-\leq 0
\leq t <1/p$. There exists a constant $\Cs$ satisfying the
following property. Let $O$ be a set in $\R^d$ whose
intersection with almost every line parallel to a coordinate
axis has at most $N$ connected components. Then, for any  $u\in
H_p^{t,t_-}$, the distribution $1_O u$ also belongs to
$H_p^{t,t_-}$, and satisfies
  \begin{equation}
  \norm{1_O u}{H_p^{t,t_-}} \leq \Cs N
  \norm{u}{H_p^{t,t_-}}.
  \end{equation}
\end{lem}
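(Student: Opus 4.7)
\medskip

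\emph{Plan.} The strategy is complex interpolation. I will establish the desired multiplier bound on two families of endpoint spaces, and then use Proposition~\ref{Triebinterpol} to deduce it on $H_p^{t,t_-}$. The two endpoints are: (i) the classical Sobolev space $H_q^{s,0}=H_q^s$ with $0\le s<1/q$, where the bound is a standard theorem of Strichartz \cite{Str}; and (ii) the ``pure stable'' anisotropic space $H_q^{0,s}$ with $1/q-1<s\le 0$, which I will reduce to (i) via Fubini and duality.

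For (i), one uses Strichartz's one-dimensional Fubini property: $\norm{u}{H_q^s(\R^d)}$ is equivalent to $\sum_{k=1}^d \norm{u_k}{L_q(\R^{d-1})}$, where $u_k$ is the $H_q^s(\R)$-norm along the $k$-th coordinate lines. Along almost every such line, $O$ is a union of at most $N$ open intervals, so $1_O$ restricted to that line is a signed sum of at most $2N$ indicators of half-lines; each of these is bounded on $H_q^s(\R)$ with uniform norm by Strichartz's one-dimensional multiplier theorem, for $0\le s<1/q$. Summing in $k$ and integrating transversally gives $\norm{1_O u}{H_q^s}\le \Cs N\norm{u}{H_q^s}$. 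For (ii), with $1/q-1<s\le 0$, the $d_s$-dimensional Fubini property gives $\norm{u}{H_q^{0,s}(\R^d)} = \norm{x\mapsto \norm{u(x,\cdot)}{H_q^s(\R^{d_s})}}{L_q(\R^{d_u})}$. For a.e.\ $x\in\R^{d_u}$, the slice $O_x\subset\R^{d_s}$ inherits the $N$-component property along each stable axis, so it suffices to bound $1_{O_x}\cdot$ uniformly on $H_q^s(\R^{d_s})$. For $s=0$ this is immediate; for $s<0$, dualize via Lemma~\ref{lem:duality}: the adjoint acts on $H_{q'}^{-s}(\R^{d_s})$ with $0<-s<1/q'$, which falls into case (i) applied in $\R^{d_s}$.

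For the interpolation step, given $(p,t,t_-)$ with $1/p-1<t_-\le 0\le t<1/p$, I seek $\theta\in(0,1)$ and parameters $p_0,t_0,p_1,t_{-,1}$ so that $H_p^{t,t_-}=[H_{p_0}^{t_0},\,H_{p_1}^{0,t_{-,1}}]_\theta$ with both endpoints admissible for the bounds above. Matching the Fourier weights in Proposition~\ref{Triebinterpol} forces $t=(1-\theta)t_0$, $t_-=\theta t_{-,1}$, and $1/p=(1-\theta)/p_0+\theta/p_1$. A short arithmetic check shows that, for any $\theta$ in the open interval $(|t_-|,\,1-t)$ (which is non-empty since $t+|t_-|<1/p+1/p'=1$), one can then choose $p_0,p_1>1$ satisfying the endpoint admissibility conditions $t_0<1/p_0$ and $t_{-,1}>1/p_1-1$: the attainable values of $(1-\theta)/p_0+\theta/p_1$ form precisely the open interval $(t,1-|t_-|)$, which contains $1/p$ under our hypotheses. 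Applying the exact interpolation bound \eqref{interpp} with the endpoint norms $\le \Cs N$ yields $\norm{1_O u}{H_p^{t,t_-}}\le \Cs N\norm{u}{H_p^{t,t_-}}$.

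The main obstacle I foresee is a technical one: the interpolation theorem \eqref{interpp} requires $1_O\cdot$ to be a single element of $L(\BB_0,\BB_1)$, not two unrelated operators. I will resolve this by first defining $1_O\cdot$ on the common dense subspace of bounded measurable functions (dense in each endpoint space by Lemma~\ref{14}), where it is unambiguously pointwise multiplication, and extending by continuity from the two endpoint bounds, exactly as the paper prescribes in its convention following Theorem~\ref{thm:MainSpectralThm}. A minor bookkeeping point is that the strict inequalities $t<1/p$ and $t_->1/p-1$ (rather than weak ones) are crucial to keep the interval $(|t_-|,1-t)$ open and to apply Strichartz in the strict range $0\le s<1/q$ at each endpoint.
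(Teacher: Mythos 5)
Your proof is correct and follows essentially the same route as the paper: Strichartz's multiplier theorem \cite[Cor II.4.2]{Str} at the $t_-=0$ endpoint, a Fubini-plus-duality reduction for the $t=0$ endpoint, and then complex interpolation via Proposition~\ref{Triebinterpol} to fill in the convex region $\{(1/p,t,t_-)\,:\,1/p-1<t_-\le 0\le t<1/p\}$. The only difference is cosmetic: you spell out the arithmetic of the interpolation (choosing $\theta\in(|t_-|,1-t)$ and then $p_0,p_1$ so that $1/p$ falls in $(t,1-|t_-|)$), whereas the paper simply asserts convexity of the admissible parameter set and identifies the convex hull; your explicit check confirms the same convex-hull computation.
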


\begin{proof}
If $t_-=0$ and $t \in [0, 1/p)$ then our claim is just
Strichartz' result \cite[Cor II.4.2]{Str} on generalized
Sobolev spaces (noting that \cite[Cor II.3.7]{Str} gives the
estimate $\Cs N$). (See also \cite[\S 4.6.3]{RS} for
alternative sufficient conditions on $O$ and $p$, $t$ ensuring
that $1_O$ is a multiplier of $H_p^{t,0}$.)

Assume now that $t=0$ and $t_- \in (0, 1/p) $. Then the
one-dimensional Fubini-type argument of Strichartz \cite[Thm
I.4.1]{Str} applies, and allows us to generalize \cite[Cor
II.4.2]{Str} to give the claim. If $t=0$ and $t_- \in (1/p-1,
0)$, the result follows by duality.

Interpolating via Proposition \ref{Triebinterpol}, the set of parameters
$(1/p,t,t_-)$ for which the conclusion of the lemma holds is
convex. It therefore contains the convex hull of $\{(1/p,t,0)
\st 0\leq t<1/p\}$ and $\{ (1/p,0,t_-)\st 1/p-1 < t_- \leq
0\}$, which coincides with the set $\{ (1/p,t,t_-) \st
1/p-1<t_-\leq 0 \leq t <1/p\}$.
\end{proof}

\subsection{Composition with smooth maps preserving the stable leaves}
\label{subsec:comp}

In this paragraph, we study the behavior of $H_p^{t,t_-}$ under
the composition with smooth maps preserving the stable leaves.

Let us start with a very rough and easy to prove lemma.

\begin{lem}
\label{lem:CompositionFacile} Let $1<p<\infty$,  and $t$, $t_-$
be real numbers with $|t|+|t_-|\le 1$. There exists a constant
$\Cs$ such that, for any invertible matrix $A$ on $\R^d$,
sending $\{0\}\times \R^{d_s}$ to itself, and for any $u\in
H_p^{t,t_-}$,
  \begin{equation}
  \norm{ u\circ A}{H_p^{t,t_-}} \leq \Cs |\det A|^{-1/p}
  \max(\nor{A},\nor{A^{-1}}) \norm{u}{H_p^{t,t_-}}.
  \end{equation}
\end{lem}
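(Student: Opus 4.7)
The plan is to establish the bound at the four endpoints $(t,t_-)\in\{(1,0),(-1,0),(0,1),(0,-1)\}$, and then to pass to arbitrary $(t,t_-)$ with $|t|+|t_-|\le 1$ via complex interpolation (Proposition~\ref{Triebinterpol}). In all four endpoint estimates the prefactor will be the same, namely $\Cs|\det A|^{-1/p}\max(\nor{A},\nor{A^{-1}})$, so that it is preserved by the geometric mean in the interpolation inequality \eqref{interpp}.

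At $(1,0)$, where $H_p^{1,0}$ is the classical Sobolev space $H_p^1$, the chain rule $\partial_j(u\circ A)=\sum_k A_{kj}(\partial_k u)\circ A$ combined with the change of variables identity $\norm{v\circ A}{L_p}=|\det A|^{-1/p}\norm{v}{L_p}$ gives the bound with factor $C|\det A|^{-1/p}\max(1,\nor{A})$. The case $(-1,0)$ then follows by duality: by Lemma~\ref{lem:duality} the dual of $H_p^{-1,0}$ is $H_{p'}^{1,0}$, and the adjoint of $u\mapsto u\circ A$ is $v\mapsto |\det A|^{-1}v\circ A^{-1}$, so applying the $(1,0)$ bound to $A^{-1}$ in $H_{p'}^{1,0}$ yields the $(-1,0)$ estimate with $\max(1,\nor{A^{-1}})$.

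The hypothesis that $A$ sends $\{0\}\times\R^{d_s}$ to itself is essential for the $(0,1)$ case: it forces $A$ to be block lower-triangular, with a $d_u\times d_u$ block $A_{11}$, a $d_s\times d_s$ block $A_{22}$, and no upper-right block, so that $(u\circ A)(x,y)=u(A_{11}x,A_{21}x+A_{22}y)$. Using the $d_s$-dimensional Fubini-type description of $H_p^{0,t_-}$ recalled at the start of Section~\ref{sec:local}, $\norm{u\circ A}{H_p^{0,1}}^p$ becomes an $L_p(\R^{d_u})$-integral of $H_p^1(\R^{d_s})$-norms along the stable leaves $\{x\}\times\R^{d_s}$. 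Since translations are isometries on $H_p^1(\R^{d_s})$, the contribution of $A_{21}x$ disappears leaf-wise, and the classical Sobolev estimate in $\R^{d_s}$ for composition with the linear map $A_{22}$ contributes a factor $C|\det A_{22}|^{-1/p}\max(1,\nor{A_{22}})$ on each leaf. Changing variables $x\mapsto A_{11}x$ in the outer integral and using $|\det A|=|\det A_{11}|\cdot|\det A_{22}|$ produces the desired bound. The case $(0,-1)$ then follows from $(0,1)$ by duality as in the previous paragraph.

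In every endpoint estimate, each of $\max(1,\nor{A})$, $\max(1,\nor{A^{-1}})$, $\max(1,\nor{A_{22}})$, $\max(1,\nor{A_{22}^{-1}})$ is bounded by $\max(\nor{A},\nor{A^{-1}})$, using $\nor{A_{22}}\le\nor{A}$, $\nor{A_{22}^{-1}}\le\nor{A^{-1}}$ and the inequality $\nor{A}\cdot\nor{A^{-1}}\ge 1$. Iterating Proposition~\ref{Triebinterpol} with $p_0=p_1=p$ (first between two adjacent vertices to cover each edge of the diamond, then between an edge point and either the opposite vertex or the origin, where $H_p^{0,0}=L_p$ gives the trivial change-of-variables bound, to fill the interior) then extends the estimate, with the same prefactor, to every $(t,t_-)$ with $|t|+|t_-|\le 1$. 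The main subtle point is really the $(0,1)$ estimate: without the block structure forced by the assumption that $A$ preserves $\{0\}\times\R^{d_s}$, the Fubini reduction along stable leaves would not be available and the off-diagonal part $A_{21}$ could not be absorbed as a harmless translation.
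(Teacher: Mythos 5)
Your proof is correct and follows the same route as the paper's: direct verification at the four endpoints $(\pm1,0)$ and $(0,\pm1)$ (via the equivalent $L_p$-plus-gradient norm for $H_p^1$, the $d_s$-dimensional Fubini structure of $H_p^{0,\pm1}$, and duality through Lemma~\ref{lem:duality}), followed by complex interpolation via Proposition~\ref{Triebinterpol} to fill the diamond $|t|+|t_-|\le 1$. The only cosmetic difference is that the paper bounds the adjoint $u\mapsto|\det A|^{-1}u\circ A^{-1}$ on $H_{p'}^{0,1}$ and dualizes to obtain $(0,-1)$, whereas you bound the $(0,1)$ case directly for $u\mapsto u\circ A$ and then dualize; these are equivalent since the block-triangularity forced by the hypothesis passes to $A^{-1}$.
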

\begin{proof}
By \cite[Proposition 2.1.2 (iv)+(vii)]{RS}, the $H_p^1$-norm is
equivalent to the norm $\norm{u}{L_p}+\norm{Du}{L_p}$. Hence,
$\norm{ u\circ A}{H_p^{1,0}} \leq \Cs |\det A|^{-1/p}
\max(\nor{A},\nor{A^{-1}}) \norm{u}{\HH_p^{1,0}}$. Similarly,
$\norm{ |\det A |^{-1} u\circ A^{-1}}{H_{p'}^{0,1}} \leq \Cs
|\det A|^{-1+1/{p'}} \max(\nor{A},\nor{A^{-1}})
\norm{u}{H_{p'}^{0,1}}$, by a $d_s$-dimensional Fubini-type
argument. Since the adjoint of $u \mapsto \det A ^{-1} u\circ
A^{-1}$ is $u \mapsto u \circ A$, the general case follows by
duality (Lemma~ \ref{lem:duality}) and interpolation
(Proposition~\ref{Triebinterpol}).
\end{proof}

\begin{lem}
\label{lem:CompositionDure} Let $\alpha\in (0,1)$, let $F:\R^d
\to \R^d$ be a $C^{1+\alpha}$ diffeomorphism sending stables
leaves to stable leaves, and let $A$ be a matrix such that, for
all $z\in \R^d$, $\nor{A^{-1}\circ DF(z)}\leq 2$ and
$\nor{DF(z)^{-1}\circ A}\leq 2$.

Assume moreover that $A$ can be written as
$M_0^{-1}\left(\begin{array}{cc} A^u & 0 \\ 0 & A^s
\end{array}\right)M_1$, where $M_0$ and $M_1$ are matrices sending stable leaves to
stable leaves, and $\mu_u:=\nor{A^u}\leq 1$,
$\mu_s:=\nor{(A^s)^{-1}}^{-1}\geq 1$.\footnote{The matrix norms
are the operator norms with respect to the usual euclidean
metric on $\R^d$, so that the norm of a matrix equals the norm
of its transpose.}

Then, for all $t>0$ and $t_-<0$ with $t+|t_-|<\alpha$ and
$t+t_-<0$, for all $p\in (1,\infty)$, there exists a constant
$\Cs$ depending only on
$\max(\nor{M_0},\nor{M_0^{-1}},\nor{M_1},\nor{M_1^{-1}})$ and
$t$, $t_-$, $p$, and a constant $C(A,F)$ such that, for all
$u\in H_p^{t,t_-}$,
  \begin{multline*}
  \norm{u\circ F}{H_p^{t,t_-}}\leq \Cs
  \norm{\det A / \det DF}{C^\alpha} |\det A|^{-1/p}
  \max(\mu_u^t, \mu_s^{t+t_-})\norm{u}{H_p^{t,t_-}}
  \\ + C \norm{u}{H_p^{0,t_-}}.
  \end{multline*}
\end{lem}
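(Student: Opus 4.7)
\medskip

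\noindent\textbf{Overall strategy.} The plan is to separate the problem into a \emph{linear} part (replacing $F$ by $A$) which I treat by Fourier analysis, and a \emph{nonlinear correction}, which is absorbed into the weaker norm $\|u\|_{H_p^{0,t_-}}$ using the $C^{1+\alpha}$ control on $F-A$. The block decomposition $A=M_0^{-1}\mathrm{diag}(A^u,A^s)M_1$ is used to reduce the linear estimate to a diagonal matrix acting on Fourier symbols, and the whole lemma is then obtained by a Lasota--Yorke style inequality, with the sharp factor $\max(\mu_u^t,\mu_s^{t+t_-})$ entering through a Marcinkiewicz multiplier computation.

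\medskip

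\noindent\textbf{Step 1: The linear case $F=A$.} Since $M_0,M_1$ send stable leaves to stable leaves and have norms controlled by the constants built into $\Cs$, Lemma~\ref{lem:CompositionFacile} lets me absorb them into the constant and reduce to the case $A=\mathrm{diag}(A^u,A^s)$. In this case a change of variables in Fourier space yields
\[
\|u\circ A\|_{H_p^{t,t_-}} = |\det A|^{-1/p}\,\|\tilde m(D)\,u\|_{L_p},
\qquad
\tilde m(\xi,\eta)=(1+|(A^u)^{T}\xi|^2+|(A^s)^{T}\eta|^2)^{t/2}(1+|(A^s)^{T}\eta|^2)^{t_-/2}.
\]
I then write $\tilde m(\xi,\eta) = \max(\mu_u^t,\mu_s^{t+t_-})\cdot m(\xi,\eta)\cdot b(\xi,\eta)$, where $m=a_{t,t_-}$ is the defining symbol of $H_p^{t,t_-}$. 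The crucial point, using $\mu_u\le 1\le\mu_s$, $t>0$ and $t+t_-<0$, is that in the ``$\eta$-dominant'' region the total stable weight has sign $t+t_-<0$ so that $(1+|(A^s)^T\eta|^2)^{(t+t_-)/2}\le \mu_s^{t+t_-}(1+|\eta|^2)^{(t+t_-)/2}$, while in the ``$\xi$-dominant'' region the factor $\mu_u\le 1$ gives $(1+|(A^u)^T\xi|^2)^{t/2}\le \mu_u^t(1+|\xi|^2)^{t/2}$ up to a harmless constant; cross terms are bounded by interpolation between the two regimes. A direct computation of mixed derivatives then shows that $b\in M$ with uniform constant, and Theorem~\ref{thm:Marc} yields
\[
\|u\circ A\|_{H_p^{t,t_-}} \le \Cs\,|\det A|^{-1/p}\max(\mu_u^t,\mu_s^{t+t_-})\,\|u\|_{H_p^{t,t_-}}.
\]

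\medskip

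\noindent\textbf{Step 2: Reducing the nonlinear case to the linear case.} Set $G=A^{-1}\circ F$, so $F=A\circ G$, with $\|DG\|,\|DG^{-1}\|\le 2$ and $G$ preserving stable leaves. I use a partition of unity $(\chi_j)$ subordinate to balls $B_j$ of radius comparable to $1$ in the metric $A^{-T}A^{-1}$, and linearize $G$ on each $B_j$ by an affine map $G_j$, writing $u\circ F=\sum_j \chi_j \cdot (u\circ A)\circ G_j + \sum_j \chi_j\bigl[(u\circ A)\circ G - (u\circ A)\circ G_j\bigr]$. The first sum is handled by Step~1 (using Lemma~\ref{Leib} to bring $\chi_j$ past the norm and the $C^\alpha$ control of $\det A/\det DF$ to replace $|\det A|^{-1/p}$ by the correct Jacobian). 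The second sum is an error of order $\|G-G_j\|_{C^0}\lesssim\mathrm{radius}^{1+\alpha}$ times one derivative of $u\circ A$, and this derivative can be taken in any direction, in particular only in stable directions; this is exactly what the $d_s$-dimensional Fubini property of $H_p^{0,t_-}$ is made for, and yields the additive term $C(A,F)\|u\|_{H_p^{0,t_-}}$.

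\medskip

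\noindent\textbf{Main obstacle.} The genuinely delicate step is Step~2, not Step~1: one must arrange the linearization so that the error term genuinely lands in $H_p^{0,t_-}$ with a constant independent of $t,t_-$ (only dependent on $A,F$), despite the fact that the linearization only holds in the ``hyperbolic'' scale determined by $A$. The combination of a partition of unity on the right scale, the Hölder regularity controlling the remainder, and the $d_s$-dimensional Fubini property of $H_p^{0,t_-}$ are the three ingredients that make this fit together cleanly; any of them being slightly off would produce either the wrong $\max(\mu_u^t,\mu_s^{t+t_-})$ factor in the main term or an uncontrolled dependence of $C(A,F)$ on $t,t_-$.
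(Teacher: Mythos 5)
Your overall two-step skeleton (treat the linear part $A$, then treat $\tilde F=A^{-1}\circ F$ separately) is the same as the paper's, but both of your steps deviate from the paper and your Step~1, as stated, is false.

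\textbf{Step~1 is wrong without an additive error term.} You claim
\[
\|u\circ A\|_{H_p^{t,t_-}} \le \Cs\,|\det A|^{-1/p}\max(\mu_u^t,\mu_s^{t+t_-})\,\|u\|_{H_p^{t,t_-}},
\]
by asserting that $b(\xi,\eta):=\tilde m(\xi,\eta)/\bigl(\max(\mu_u^t,\mu_s^{t+t_-})\,a_{t,t_-}(\xi,\eta)\bigr)$ belongs to $M$ with a uniform constant. This fails at low frequencies: for $|\xi|,|\eta|\lesssim 1$ one has $\tilde m\approx 1$ and $a_{t,t_-}\approx 1$, so $b\approx 1/\max(\mu_u^t,\mu_s^{t+t_-})$, which blows up as $\mu_u\to 0$ or $\mu_s\to\infty$. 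The correct pointwise symbol bound, which is what the paper proves, is
\[
\tilde m \;\le\; \Cs\,\max(\mu_u^t,\mu_s^{t+t_-})\,a_{t,t_-} \;+\; C\,a_{0,t_-},
\]
and it is precisely this extra $C\,a_{0,t_-}$, coming from the bounded-frequency region, that produces the additive term $C\|u\|_{H_p^{0,t_-}}$ in the conclusion. In the paper the compact remainder therefore already appears in the linear Step~1; the nonlinear Step~2 is a \emph{clean} (purely multiplicative) bound
$\|u\circ\tilde F\|_{H_p^{t,t_-}}\le\Cs\|\det A/\det DF\|_{C^\alpha}\|u\|_{H_p^{t,t_-}}$,
which is the opposite of what you propose.

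\textbf{Step~2 is unnecessarily complicated and the error control is not justified.} The paper's second step is essentially soft: since $\|D\tilde F\|,\|D\tilde F^{-1}\|\le 2$ everywhere and $\tilde F$ preserves stable leaves, one bounds composition by $\tilde F$ on $H_p^1$ via the equivalent norm $\|u\|_{L_p}+\|Du\|_{L_p}$, interpolates with $L_p$ to reach $H_p^{t^0}$, handles $H_p^{0,t^0_-}$ by the $d_s$-dimensional Fubini property plus duality, and interpolates once more. No partition of unity and no Taylor linearization of $\tilde F$ is needed. In your version, the Taylor remainder $(u\circ A)\circ G-(u\circ A)\circ G_j$ is controlled in the $C^0$ metric by $\|G-G_j\|_{C^0}\lesssim r^{1+\alpha}$, but this does not give a bound in the negative-regularity space $H_p^{0,t_-}$: negative Sobolev norms are insensitive to $C^0$ closeness of the arguments, and it is not clear why the remainder would pair only with stable derivatives. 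Moreover, to end up with $C(A,F)\|u\|_{H_p^{0,t_-}}$ you would still need composition by $A$ to be bounded on $H_p^{0,t_-}$ and the constant to be absorbed, which you do not address. You identify Step~2 as the delicate step; in the actual proof it is the routine one, whereas the hyperbolicity gain $\max(\mu_u^t,\mu_s^{t+t_-})$ is what requires care, and that gain is obtained in Step~1 via the case analysis of the pulled-back symbol (the $|U\xi|$-dominant regime, the $|S\eta|$-dominant regime, and the bounded-frequency remainder), followed by Marcinkiewicz.

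In short: put the additive $\|u\|_{H_p^{0,t_-}}$ term in Step~1 where the symbol estimate forces it, and replace your Step~2 by the straightforward interpolation argument using $\|D\tilde F^{\pm 1}\|\le 2$.
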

In the applications to transfer operators, $F$
will be the local {\it inverse} of some iterate $T^n$ of a piecewise
hyperbolic map. Since $T^n$ is contracting along $E^s$ and
expanding along $E^u$, the map $F$ will
therefore satisfy the assumptions of the lemma regarding
$\mu_s$ and $\mu_u$.

\begin{proof}[Proof of Lemma \ref{lem:CompositionDure}]
We will write $u\circ F=u\circ A\circ A^{-1}\circ F$. Hence, we
need to study the composition with $A$ and $A^{-1}\circ F$. We
claim that
  \begin{equation}
  \label{ComposeA}
  \norm{u\circ A}{H_p^{t,t_-}} \leq |\det A|^{-1/p}
  \Cs \max(\mu_u^t, \mu_s^{t+t_-})\norm{u}{H_p^{t,t_-}}+ C
  \norm{u}{H_p^{0,t_-}}
  \end{equation}
and
  \begin{equation}
  \label{ComposeTA}
  \norm{u\circ A^{-1}\circ F}{H_p^{t,t_-}} \leq \Cs
  \norm{\det A / \det DF}{C^\alpha}
  \norm{u}{H_p^{t,t_-}}.
  \end{equation}
Together, these equations prove the lemma.

\emph{First step.} Let us prove \eqref{ComposeA}. This is
a special case of \cite[Lemma 2.10]{baladi:Cinfty}
(replacing $(0,t_-)$ by $(t-1/2, t_-)$). We will
give the proof for the convenience of the reader,
since it is at the same time very simple and at the heart of
our argument. Lemma \ref{lem:CompositionFacile} deals with the
composition with $M_0^{-1}$ and $M_1$, hence we can assume that
$M_0=M_1=\Id$.

We want to estimate $\norm{u\circ
A}{H_p^{t,t_-}}=\norm{\FF^{-1}(a_{t,t_-} \FF (u\circ
A))}{L_p}$. A change of variables readily gives
$\FF^{-1}(a_{t,t_-} \FF (u\circ A))=\FF^{-1}(a_{t,t_-}\circ
\transposee{A}\cdot  \FF u) \circ A$. Hence, we have to show
that
  \begin{equation}
  \label{eq:klmjwxvop}
  \norm{\FF^{-1}(a_{t,t_-}\circ
  \transposee{A} \cdot \FF u)}{L_p} \leq
  \Cs \max(\mu_u^t, \mu_s^{t+t_-})\norm{u}{H_p^{t,t_-}}+ C
  \norm{u}{H_p^{0,t_-}}.
  \end{equation}
Write $\transposee{A}=\left(\begin{array}{cc} U&0\\0 & S
\end{array}\right)$ with $|U \xi|\leq
\mu_u|\xi|$ and $|S \eta|\geq \mu_s |\eta|$ by definition of
$\mu_u,\mu_s$. Let
  \begin{equation}
  b(\xi,\eta)=a_{t,t_-}\circ
  \transposee{A}(\xi,\eta)=(1+|U \xi|^2+|S\eta|^2)^{t/2}
  (1+|S\eta|^2)^{t_-/2}.
  \end{equation}
Let us prove that, if $C$ is large enough, we have
  \begin{equation}
  \label{eq:opuispoif}
  b \leq \Cs \max(\mu_u^t, \mu_s^{t+t_-})
  a_{t,t_-}+C a_{0,t_-}.
  \end{equation}
If we can prove this equation together with the corresponding
estimates for the successive derivatives of $b$, then Theorem
\ref{thm:Marc}  applied to
$$b/(\Cs \max(\mu_u^t, \mu_s^{t+t_-})
  a_{t,t_-}+C a_{0,t_-})$$
gives
  \begin{equation}
  \norm{\FF^{-1}(b \FF u)}{L_p} \leq
  \Cs \norm{ \FF^{-1}((\Cs \max(\mu_u^{t}, \mu_s^{t+t_-})
  a_{t,t_-}+C a_{0,t_-}) \FF u)}{L_p},
  \end{equation}
which yields \eqref{eq:klmjwxvop}.

Let us now prove \eqref{eq:opuispoif} (the proof for the
derivatives of $b$ is similar). We will freely use the
following trivial inequalities: for $x\geq 1$ and $\lambda\geq
1$,
  \begin{equation}
  \frac{1}{\lambda}(1+\lambda x)\leq 1+x \leq
  \frac{2}{\lambda}(1+\lambda x).
  \end{equation}
Assume first $|U\xi|^2 \leq |S\eta|^2$ and $|S\eta|^2 \geq 1$.
Then, since $t>0$ and $t+t_-<0$,
  \begin{align*}
  b(\xi,\eta) &\leq (1+2 |S\eta|^2)^{t/2} (1+|S\eta|^2)^{t_-/2}
  \leq 2^{t/2} (1+|S\eta|^2)^{t/2}(1+|S\eta|^2)^{t_-/2}
  \\&\leq 2^{t/2} (1+\mu_s^2 |\eta|^2)^{(t+t_-)/2}
  \leq 2^{t/2} (\mu_s^2/2)^{(t+t_-)/2} (1+|\eta|^2)^{(t+t_-)/2}
  \\& \leq 2^{-t_-/2} \mu_s^{(t+t_-)} a_{t,t_-}(\xi,\eta).
  \end{align*}
If $|U\xi|^2 \geq |S\eta|^2$ and $|U\xi|^2 \geq 1$, then
  \begin{align*}
  b(\xi,\eta)& \leq (1+2 |U\xi|^2)^{t/2} (1+|S\eta|^2)^{t_-/2}
  \leq 2^{t/2} (1+|U\xi|^2)^{t/2} (1+\mu_s^2 |\eta|^2)^{t_-/2}
  \\&\leq 2^{t/2} (1+ \mu_u^2 |\xi|^2)^{t/2} (1+|\eta|^2)^{t_-/2}
  \leq 2^{t/2} (2\mu_u^2)^{t/2} (1+|\xi|^2)^{t/2} (1+|\eta|^2)^{t_-/2}
  \\&\leq 2^{t} \mu_u^t a_{t,t_-}(\xi,\eta).
  \end{align*}
In the remaining case, $\xi$ and $\eta$ are uniformly bounded,
and \eqref{eq:opuispoif} follows by choosing $C$ large enough.
This concludes the proof of \eqref{ComposeA}.

\emph{Second step.} Let us now prove \eqref{ComposeTA}. We will
write $\tilde F=A^{-1}\circ F$. As in the proof of Lemmas~
\ref{Leib}, \ref{lem:multiplier}, and ~ \ref{lem:CompositionFacile},
we will  study simpler spaces before
concluding by interpolation. We thus write $(t,t_-)=(\theta
t^0, (1-\theta) t^0_-)$ for some $0<\theta<1$ and $t^0, -t^0_-
\in (0,\alpha)$.

By \cite[Proposition 2.1.2 (iv)+(vii)]{RS},
the $H_p^1$-norm is
equivalent to the norm $\norm{u}{L_p}+\norm{D u}{L_p}$.
Since the derivative of $\tilde F$ has norm everywhere
bounded by $2$ and $|\det D\tilde F|\le 2^d$ by assumption, we get after a change of
variables $\norm{u\circ \tilde F}{H_p^1} \leq \Cs
\norm{u}{H_p^1}$. Since $\norm{u\circ \tilde F}{L_p} \leq \Cs
\norm{u}{L_p}$, the interpolation inequality \eqref{interpp}
gives
  \begin{equation}
  \label{eq:Interp0}
  \norm{u\circ \tilde F}{H_p^{t^0}} \leq \Cs \norm{u}{H_p^{t^0}}.
  \end{equation}

Applying the same argument via Fubini to $\tilde F^{-1}$ on each leaf of the vertical
direction, we also have
$\norm{ u \circ \tilde F^{-1}}{H_{p'}^{0,1}}
\leq \Cs \norm{u}{H_{p'}^{0,1}}$.  The adjoint of the composition
by $\tilde F^{-1}$ is given by $\Pp(u)= \det D\tilde F
\cdot u\circ \tilde F$. Hence, duality yields $\norm{\Pp
u}{H_p^{0,-1}} \leq \Cs \norm{u}{H_p^{0,-1}}$. Since $\Pp$ is
bounded by $\Cs$ on $L_p$, we  get by interpolation
  \begin{equation}
  \label{qoisufmkljqsdf}
  \norm{\det D\tilde F \cdot u\circ \tilde F}{H_p^{0,t^0_-}}\leq \Cs \norm{u}{H_p^{0,t^0_-}}.
  \end{equation}
Together with \eqref{eq:Mult0t}, we obtain
  \begin{equation}
  \label{eq:Interp1}
  \begin{split}
  \norm{u\circ \tilde F}{H_p^{0,t^0_-}}&\leq \Cs\norm{1/\det D\tilde F}{C^\alpha}
  \norm{\det D\tilde F \cdot u\circ \tilde F}{H_p^{0,t^0_-}}
  \\&
  \leq \Cs\norm{1/\det D\tilde F}{C^\alpha} \norm{u}{H_p^{0,t^0_-}}.
  \end{split}
  \end{equation}

Interpolating between \eqref{eq:Interp0} and
\eqref{eq:Interp1}, we get
  \begin{equation}
  \norm{u\circ \tilde F}{H_p^{t,t-}}\leq
  \Cs \norm{1/\det D\tilde F}{C^\alpha}^{1-\theta} \norm{u}{H_p^{t,t-}}.
  \end{equation}
Finally, $1/\det D\tilde F=\det A / \det DF$ is bounded from
below, and \eqref{ComposeTA} follows.
\end{proof}

\begin{rmk}[Invariance]
\label{rmk:Invariance} The arguments in the second step of the
proof of Lemma \ref{lem:CompositionDure} (with $A=\Id$) also
imply that, whenever $t>0$ and $t_-<0$ satisfy
$t+|t_-|<\alpha$, then the space $H_p^{t,t_-}$ is invariant
under the composition with $C^{1+\alpha}$ diffeomorphisms of
$\R^d$ sending stable leaves to stable leaves.
\end{rmk}

\begin{rmk}[Extending  \cite{baladi:Cinfty}
to $C^{1+\alpha}$ Anosov diffeomorphisms]
\label{extend}
If $0<\alpha <1$ we can apply  Lemma~
\ref{lem:CompositionDure} .
If $\alpha \ge 1$ and $t >0$, $t+t_- <0$
satisfy $t+|t_-|<\alpha$,  letting $m$ be the smallest integer
$\ge t+|t_-|$, \cite[Proposition 2.1.2 (iv)+(vii)]{RS},
implies that
the $H_p^m$-norm is
equivalent to the norm $\sum_{|\gamma|\le m}\norm{\partial^\gamma u}{L_p}$.
Thus, replacing the matrix $A$ in Lemma~
\ref{lem:CompositionDure} by a $C^\infty$ diffeomorphism $A$ preserving stable
leaves, with least expansion $\mu_s\ge 1$
on the verticals, and whose inverse preserves horizontal cones with
least expansion $\mu_s^{-1}\ge 1$,
and such that $\norm{DA^{-1}\circ DF}{C^{m-1}}\le 2$ and $\norm{DF^{-1} \circ DA}{C^{m-1}}\le 2$,
we get,
by applying \cite[Lemma 2.10]{baladi:Cinfty} to
prove the analogue of  (\ref{ComposeA}),
that
\begin{multline*}
  \norm{u\circ F}{H_p^{t,t_-}}\leq \Cs
  \norm{\det DA / \det DF}{C^\alpha} |\det DA|^{-1/p}
  \max(\mu_u^t, \mu_s^{t+t_-})\norm{u}{H_p^{t,t_-}}
  \\ + C \norm{u}{H_p^{t-1/2,t_-}}.
  \end{multline*}
The proof of Theorem~ \ref{thm:MainSpectralThm}
then applies to any  $C^{1+\alpha}$ Anosov diffeomorphism $T$
with $C^{1+\alpha}$ stable distribution, and to any $C^\alpha$ weight $g$,
with $\alpha >0$.
\end{rmk}

\subsection{Localization}\label{locall}

\begin{lem}[Localization principle]
\label{lem:localization}
Let $\eta:\R^d\to [0,1]$ be a $C^\infty$ function with compact
support and write $\eta_m(x)=\eta(x+m)$. For any $p\in
(1,\infty)$ and $t$, $t_-\in \R$,  there exists $\Cs >0$ so
that for each  $u \in H_p^{t,t_-}$
  \begin{equation}
  \left(\sum_{m\in \Z^d} \norm{\eta_m u }{H_p^{t,t_-}}^p\right)^{1/p}
  \le \Cs  \norm{u}{H_p^{t,t_-}}.
  \end{equation}
\end{lem}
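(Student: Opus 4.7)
The plan is to prove the bound first at ``corner'' values of $(t,t_-)$ where both indices are integers with one equal to zero, and then fill in all $(t,t_-)\in\R^2$ by complex interpolation applied to the single operator $T\colon u\mapsto(\eta_m u)_{m\in\Z^d}$.

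At $(t,t_-)=(N,0)$ with $N\in\Z_{\ge0}$, I would use the equivalence $\|v\|_{H_p^N}^p\sim\sum_{|\gamma|\le N}\|\partial^\gamma v\|_{L_p}^p$ from \cite[Proposition 2.1.2]{RS}. The Leibniz rule
\begin{equation*}
\partial^\gamma(\eta_m u)=\sum_{\beta\le\gamma}\binom{\gamma}{\beta}\partial^\beta\eta_m\cdot\partial^{\gamma-\beta}u
\end{equation*}
together with the uniform pointwise estimate $\sup_x\sum_{m\in\Z^d}|\partial^\beta\eta_m(x)|^p<\infty$ (which holds because $\eta$ has compact support, so only boundedly many $\eta_m$ are nonzero at each $x$) yields, after Fubini, $\sum_m\|\eta_m u\|_{H_p^N}^p\le C\|u\|_{H_p^N}^p$. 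The same argument carried out in stable-leaf coordinates, via the $d_s$-dimensional Fubini property of $H_p^{0,N}$ recalled at the start of this section, handles $(0,N)$.

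For $(-N,0)$ and $(0,-N)$ I would argue by duality. The adjoint of $T$ viewed as an operator $H_{p'}^{t,t_-}\to\ell^{p'}(H_{p'}^{t,t_-})$ is $T^*\colon(v_m)\mapsto\sum_m\eta_m v_m$. A near-identical Leibniz computation establishes the ``backward'' bound $\|\sum_m\eta_m v_m\|_{H_{p'}^N}\le C(\sum_m\|v_m\|_{H_{p'}^N}^{p'})^{1/p'}$, and dualising via Lemma~\ref{lem:duality} transfers this to the desired forward bound at $(-N,0)$, and analogously at $(0,-N)$.

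Finally, I combine these four corner estimates by complex interpolation. The standard vector-valued identity $[\ell^p(A_0),\ell^p(A_1)]_\theta=\ell^p([A_0,A_1]_\theta)$ (see e.g.~\cite{TrB}), coupled with Proposition~\ref{Triebinterpol} and the interpolation inequality \eqref{interpp} applied to the single operator $T$, gives the lemma first at every $(t,0)$ (by interpolating between $(N,0)$ and $(-N,0)$ for large $N$), then at every $(0,t_-)$, and finally at every $(t,t_-)$ by interpolating between a $(t',0)$ and a $(0,t_-')$ for suitably chosen $t',t_-'$. The main obstacle I expect is technical bookkeeping for the vector-valued interpolation — in particular, verifying that $T$ is bounded on the sum $H_p^{t_0,r_0}+H_p^{t_1,r_1}$ (which follows from $\eta_m$ being a multiplier on $S'$) so that the abstract interpolation theorem applies, and that the interpolation couple $(\ell^p(H_p^{t_0,r_0}),\ell^p(H_p^{t_1,r_1}))$ is indeed compatible.
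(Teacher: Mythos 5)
Your proof is correct, but it takes a genuinely different route from the one in the paper. At $(t,t_-)=(t,0)$ for arbitrary real $t$ the paper invokes Triebel's localization theorem for the classical Sobolev scale, which is based on a Paley--Littlewood decomposition; the case $(0,t_-)$ is then handled by the $d_s$-dimensional Fubini reduction to the one-dimensional Sobolev case, and a single interpolation step (using $[\ell_p(A_0),\ell_p(A_1)]_\theta=\ell_p([A_0,A_1]_\theta)$ and Proposition~\ref{Triebinterpol}) fills in the rest. You instead anchor the argument at the four corners $(\pm N,0)$, $(0,\pm N)$ with $N$ a nonnegative integer, proving the positive cases by hand via the Leibniz rule, the equivalence $\|v\|_{H_p^N}^p\sim\sum_{|\gamma|\le N}\|\partial^\gamma v\|_{L_p}^p$, and the uniform bound $\sup_x\sum_m|\partial^\beta\eta_m(x)|^p<\infty$; you then pass to the negative corners by the $\ell_p$--$\ell_{p'}$ duality argument (which does require, and you correctly identify, the backward bound $\|\sum_m\eta_m v_m\|_{H_{p'}^N}\le C(\sum_m\|v_m\|_{H_{p'}^N}^{p'})^{1/p'}$), and finally fill in with three rounds of complex interpolation. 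The trade-off is clear: your route avoids citing Triebel's Paley--Littlewood-based theorem and is thus more self-contained at the corner level, but it costs an extra duality step plus two more interpolation layers, and you must check the compatibility of the $\ell_p$-valued interpolation couples, which you correctly flag. The paper's route is shorter once the Triebel citation is accepted. Both are sound; your version would be the natural one if one wished to keep the exposition elementary and independent of the Paley--Littlewood machinery.
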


\begin{rmk}
If, in addition to the assumptions of
Lemma~\ref{lem:localization}, one supposes that $\sum_{m \in
\integer^d} \eta_m(x) =1$ for all $x$, then one can show that
there is $\Cs$ so that for each $u$ such that ${ \eta_m u }\in
{H_p^{t,t_-}}$ for all $m$ we have
$$ \norm{u}{H_p^{t,t_-}}\le
\Cs \left(\sum_{m\in \Z^d} \norm{\eta_m u }{H_p^{t,t_-}}^p\right)^{1/p}.
$$
(We shall not need the above bound.)
\end{rmk}

\begin{proof}[Proof of Lemma \ref{lem:localization}]
For $t_-=0$ and arbitrary $t$, Lemma~\ref{lem:localization} is
a result of Triebel \cite[Theorem 2.4.7]{Trie} based on a
Paley-Littlewood-type decomposition. Moreover, the constant
$\Cs$ depends only on the size of the support of $\eta$, and
its $C^k$-norm for some large enough $k$.

To handle $t_-\in \real$, we will (again) start from the result
for the classical Sobolev space and use Fubini and
interpolation, as follows.

Let us prove the lemma for $t=0$ and $t_-\in \R$, using a
$d_s$-dimensional Fubini argument. We have
  \begin{equation}
  \sum_{m\in \Z^d} \norm{\eta_m u}{H_p^{0,t_-}(\R^d)}^p
  = \sum_{m\in \Z^d} \int_{x\in \R^{d_u}} \norm{ \eta_m u}{H_p^{t_-}(\{x\}\times \R^{d_s})}^p \dd x.
  \end{equation}
For each $x\in \R^{d_u}$, the values of $m\in \Z^d$ for which
the restriction of $\eta_m u$ to $\{x\}\times \R^{d_s}$ is
nonzero are contained in a set $M(x) \times \Z^{d_s}$, where
$\Card M(x)$ is bounded independently of $x$. Using the result
of Triebel for the Sobolev space $H_p^{t_-}(\R^{d_s})$, we get
  \begin{equation}
  \sum_{m\in \Z^d}\norm{ \eta_m u}{H_p^{t_-}(\{x\}\times \R^{d_s})}^p
  \leq \Cs \norm{u}{H_p^{t_-}(\{x\}\times \R^{d_s})}^p.
  \end{equation}
Integrating over $x\in \R^{d_u}$ and using the Fubini equality
  \begin{equation}
  \int_{x\in \R^{d_u}} \norm{u}{H_p^{t_-}(\{x\}\times \R^{d_s})}^p \dd x=\norm{u}{H_p^{0,t_-}}^p,
  \end{equation}
we obtain the lemma for $t=0$ and $t_-\in \R$.

Consider the map $u\mapsto (\eta_m u)_{m\in \Z^d}$. We have
shown that it sends continuously $H_p^t$ to $\ell_p(H_p^t)$ and
$H_p^{0,t_-}$ to $\ell_p(H_p^{0,t_-})$. By interpolation, for
any $\theta\in (0,1)$, it sends $[H_p^t, H_p^{0,t_-}]_\theta$
to $[\ell_p(H_p^t), \ell_p(H_p^{0,t_-})]_\theta$. By
Proposition \ref{Triebinterpol}, the first space is
$H_p^{(1-\theta)t, \theta t_-}$ while, by \cite[Theorem
1.18.1]{TrB} and again Proposition \ref{Triebinterpol}, the
second space is $\ell_p(H_p^{(1-\theta)t, \theta t_-})$. This
proves the lemma.
\end{proof}



\section{Proof of the main theorem}
\label{mainsec}

In this section, we prove Theorem \ref{thm:MainSpectralThm}.
Let us fix once and for all a piecewise $C^{1+\alpha}$
hyperbolic map $T$ and  a $C^\alpha$ function $g$, satisfying the
assumptions of this theorem. We will denote by $\Cs$ constants
that depend only on $p$, $t$, $t_-$ and $T$.

We recall that the norm on $\HH_p^{t,t_-}$ has been defined in
\eqref{defnorm} using a partition of unity
$\rho_1,\dots,\rho_J$ and charts $\kappa_1,\dots,\kappa_J$
subordinated to this partition of unity.

In the following arguments, when working on a set
$\overline{O_\ii}$ or in a neighborhood of this set (with $\ii$
of length $n$), then $T^n$ will implicitly mean $T_\ii$. In the
same way, $g^{(n)}$ will rather be a smooth extension of
$g^{(n)}\big|_{O_\ii}$ to a neighborhood of $\overline{O_\ii}$.
This should not cause any confusion.

To study $\Lp_g^n$, we will need, in addition to the
estimates from Section~ \ref{sec:local}, to iterate the inverse
branches $T_\ii^{-1}$, to truncate the functions and to use
partitions of unity. To do this, we will use the three
following lemmas.

\begin{lem}
\label{lem:iterate} There exists a constant $\Cs$ such that,
for any $n$ and $\ii=(i_0,\dots,i_{n-1})$, for any $x\in
\overline{O_\ii}$, for any $j,k\in [1,J]$ such that $x\in \supp
\rho_j$ and $y=T_\ii x\in\supp \rho_k$, there exists a
neighborhood $O$ of $y$ and a $C^{1+\alpha}$ diffeomorphism $F$
of $\R^d$, coinciding with $\kappa_j \circ T_\ii^{-1}\circ
\kappa_k^{-1}$ on $\kappa_k(O)$, and satisfying the assumptions
of Lemma \ref{lem:CompositionDure} with $\mu_u \leq \Cs
\lambda_{u,n}^{-1}(x)$ and $\mu_s \geq
\Cs^{-1}\lambda_{s,n}^{-1}(x)$, and
$$\max(\nor{M_0},\nor{M_0^{-1}},\nor{M_1},\nor{M_1^{-1}})\leq
\Cs.
$$
\end{lem}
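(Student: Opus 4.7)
The plan is to construct $F$ as a global $C^{1+\alpha}$ extension of the locally defined map $F_0:=\kappa_j\circ T_\ii^{-1}\circ \kappa_k^{-1}$ (defined in a neighborhood of $y':=\kappa_k(y)$) and to take $A:=DF_0(y')$ together with a specific factorization. Since the charts $\kappa_j,\kappa_k$ send $E^s$ to $\{0\}\times\R^{d_s}$ and $T_\ii$ preserves $E^s$, the local map $F_0$ sends stable leaves to stable leaves, and hence has the form $F_0(x,y)=(F_0^u(x),F_0^s(x,y))$ near $y'$. I would extend $F_0$ to a global diffeomorphism $F$ preserving stable leaves by a standard smooth-cutoff argument applied componentwise, gluing $F_0$ on a ball $B(y',\delta/2)$ to the affine map $z\mapsto A(z-y')+F_0(y')$ outside $B(y',\delta)$. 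Choosing $\delta$ small enough relative to the modulus of continuity of $DF_0$ (the constant $\Cs$ does not constrain this choice, since $\delta$ may depend on $n,\ii,x,j,k$) guarantees $\|A^{-1}DF(z)\|,\|DF(z)^{-1}A\|\leq 2$ everywhere, and the required neighborhood is $O:=\kappa_k^{-1}(B(y',\delta/2))$.

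Next, because $F_0$ preserves stable leaves, $A$ is block lower-triangular in the decomposition $\R^d=\R^{d_u}\oplus\R^{d_s}$: $A=\begin{pmatrix}U&0\\B&S\end{pmatrix}$. I would use the factorization
\begin{equation*}
A=\begin{pmatrix}U&0\\0&S\end{pmatrix}\begin{pmatrix}\Id&0\\S^{-1}B&\Id\end{pmatrix},
\end{equation*}
so that $A^u:=U$, $A^s:=S$, $M_0:=\Id$, and $M_1:=\begin{pmatrix}\Id&0\\S^{-1}B&\Id\end{pmatrix}$; both $M_0$ and $M_1$ are block lower-triangular, hence preserve stable leaves. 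The estimates on $\mu_u$ and $\mu_s$ follow from Definition \ref{def:PiecHyp}: since $F_0$ is the composition of $T_\ii^{-1}$ with charts whose derivatives and inverses are uniformly bounded, the inverse dynamics expands $E^s$ by $\lambda_{s,n}(x)^{-1}$, giving $\mu_s=\|S^{-1}\|^{-1}\geq\Cs^{-1}\lambda_{s,n}(x)^{-1}$, and contracts the quotient unstable direction by $\lambda_{u,n}(x)^{-1}$; the uniform transversality of $C^u$ and $E^s$ then yields $\mu_u=\|U\|\leq\Cs\lambda_{u,n}(x)^{-1}$.

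The main obstacle is the uniform bound $\|M_1\|,\|M_1^{-1}\|\leq\Cs$, equivalent to $\|S^{-1}B\|\leq\Cs$ independently of $n$. Writing the forward derivative $DT^n=\begin{pmatrix}U_n&0\\B_n&S_n\end{pmatrix}$ in chart, one has $U=U_n^{-1}$, $S=S_n^{-1}$, and a direct computation from $DT^{-n}=(DT^n)^{-1}$ gives $S^{-1}B=-B_nU_n^{-1}$, so the task reduces to $\|B_nU_n^{-1}\|\leq\Cs$. For a chart-horizontal vector $(v,0)$, decompose $(v,0)=c+s$ with $c\in C^u(x)$ and $s\in E^s(x)$; the uniform transversality gives $|c|,|s|\leq\Cs|v|$. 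By cone invariance $DT^n c\in C^u$, so its stable chart-component is bounded by $\Cs$ times its unstable chart-component, while $|DT^n s|\leq\lambda_{s,n}|s|\leq\Cs\lambda_{s,n}|v|$. Combining, I obtain $|B_nv|\leq\Cs|U_nv|+\Cs\lambda_{s,n}|v|$; substituting $v=U_n^{-1}w$ and using $\|U_n^{-1}\|\leq\Cs\lambda_{u,n}^{-1}$ yields
\begin{equation*}
\|B_nU_n^{-1}\|\leq\Cs\bigl(1+\lambda_{s,n}/\lambda_{u,n}\bigr)\leq\Cs,
\end{equation*}
since $\lambda_{s,n}<1<\lambda_{u,n}$. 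This bounds $\|M_1\|$ and $\|M_1^{-1}\|$ uniformly in $n$, as required.
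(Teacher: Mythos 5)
Your proof is correct and reaches the same conclusion, but it takes a genuinely different route to the factorization of $A$. The paper fixes a $d_u$-dimensional subspace $P\subset C^u(x)$ and lets $M_0$, $M_1$ be bounded straightening matrices sending $D\kappa_j(x)P$ and $D\kappa_k(y)DT_\ii(x)P$ respectively to $\R^{d_u}\times\{0\}$; the cone invariance of $C^u$ then forces $M_0AM_1^{-1}$ to be block-diagonal, so no estimate on any shear term is needed, and the boundedness of $M_0,M_1$ is immediate from the uniform transversality of $C^u$ with $E^s$. You instead fix $M_0=\Id$, read off the block lower-triangular form of $A$ in the stable-straightened chart, and put all the off-diagonal content into the shear $M_1$; you must then prove directly that the shear $S^{-1}B=-B_nU_n^{-1}$ is uniformly bounded, which you do via a cone argument. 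Your computation is correct: decomposing $(v,0)=c+s$ (where, to be precise, one should take $c$ in a fixed $d_u$-dimensional subspace inside the cone to make the decomposition well-defined, but this is minor), you get $|B_nv|\leq\Cs|U_nv|+\Cs\lambda_{s,n}|v|$ from cone invariance and stable contraction, and $\|U_n^{-1}\|\leq\Cs\lambda_{u,n}^{-1}$ because the horizontal vector has a cone component of size at least $|v|$ (noting $|c|^2=|v|^2+|s|^2\geq|v|^2$ since $s$ is vertical and $(v,0)$ horizontal). The paper's choice avoids this estimate entirely and is shorter, at the cost of being less explicit; your version shows exactly which geometric bounds are used. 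Both leave implicit the small normalization needed to ensure $\mu_u\leq 1$, $\mu_s\geq 1$ (e.g.~when $\Cs\lambda_{u,n}^{-1}(x)>1$ for small $n$), which can be fixed by rescaling $M_0,M_1$, but this is a shared technicality.
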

\begin{proof}
Let $F_0=\kappa_j \circ T_\ii^{-1}\circ \kappa_k^{-1}$, it is
defined on a neighborhood of $\kappa_k(y)$. Moreover, let $P$
be a $d_u$-dimensional subspace of the unstable cone at $x$,
and let $M_0$, $M_1$ be invertible matrices (with bounded
norms) sending respectively $D\kappa_j(x)P$ and $D\kappa_k(y)
DT_\ii(x)P$ to $\R^{d_u}\times \{0\}$, and stable leaves to
stable leaves. Such matrices exist since the unstable cone is
uniformly bounded away from the stable direction.

Let $A=DF_0(\kappa_k (y))$, then $M_0 AM_1^{-1}$ sends
$\R^{d_u}\times \{0\}$ to itself, and $\{0\}\times \R^{d_s}$ to
itself, i.e., it is block-diagonal. Hence, the matrix $A$
satisfies the assumptions of Lemma \ref{lem:CompositionDure}.
Let $F$ be a $C^{1+\alpha}$ diffeomorphism of $\R^d$ coinciding
with $F_0$ on a neighborhood of $\kappa_k(y)$ and such that
$DF(z)$ is everywhere close to $A$. Up to taking a smaller
neighborhood $O$ of $y$ (depending on $n$), the claims of
Lemma~ \ref{lem:iterate} hold for $F$.
\end{proof}

\begin{lem}
\label{lem:truncate} There exists  $\Cs$ such that,
for any $n$, for any $\ii=(i_0,\dots,i_{n-1})$, for any $x\in
\overline{O_\ii}$, for any $j$ such that $x\in \supp \rho_j$,
there exists a neighborhood $O'$ of $x$ and a matrix $M$
sending stable leaves to stable leaves, with
$$\max(\nor{M},\nor{M^{-1}})\leq \Cs,$$
such that the intersection of $M \kappa_j(O'\cap O_\ii)$
intersects almost any line parallel to a coordinate axis along
at most $\Cs n$ connected components.
\end{lem}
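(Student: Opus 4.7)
The strategy is to decompose $\partial O_\ii$ locally near $x$ into at most $Kn$ smooth pieces (where $K$ bounds the number of smooth $C^1$ components in any single $\partial O_i$, depending only on $T$) and to choose the matrix $M$ so that lines parallel to the coordinate axes of $M \kappa_j$ cross few of those pieces. The inductive definition $O_\ii = \{y \in O_{i_0} \st T_{i_0}(y) \in O_{(i_1,\dots,i_{n-1})}\}$ expresses $\partial O_\ii$, near $x$, as the union over $k = 0,\dots,n-1$ of the pullbacks by $T^k = T_{(i_0,\dots,i_{k-1})}$ of (at most $K$) smooth pieces of $\partial O_{i_k}$. Each pullback is a $C^1$ hypersurface near $x$ with a well-defined tangent plane $T_x H_\ell$, for $\ell = 1, \dots, m$ and $m \leq Kn$.

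I would then restrict $M$ to the compact set of matrices preserving stable leaves with $\nor{M}, \nor{M^{-1}} \leq \Cs$, for a fixed $\Cs$ depending only on the orientation of $E^s$ in $\kappa_j$. Within this compact set, the subset of matrices for which one of the first $d_u$ coordinate axes of $M \kappa_j$ falls inside some $T_x H_\ell$ is a finite union of proper algebraic subvarieties, hence of Lebesgue measure zero; pick $M$ outside it. By the implicit function theorem, one can then shrink $O'$ (with dependence on $n, \ii, x$, which is permitted by the statement) so that in the $M \kappa_j$-coordinates each line parallel to one of the first $d_u$ coordinate axes meets each $H_\ell \cap O'$ in at most one point, giving at most $Kn + 1 \leq \Cs n$ connected components per such line.

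For lines parallel to the last $d_s$ coordinate axes, these correspond to pieces of stable leaves, since $M \kappa_j$ sends $E^s$ to $\{0\} \times \R^{d_s}$. Using the forward invariance of the stable foliation under $T$, the intersection of a stable leaf through $y \in O' \cap O_\ii$ with a pullback $H_\ell$ is in bijection, via $T^k$, with the intersection of the stable leaf through $T^k(y)$ with the corresponding smooth piece of $\partial O_{i_k}$. The weak transversality condition (Definition~\ref{def:WTC}) bounds this latter intersection by $L$ points locally, for almost every stable vector at $T^k(y)$; taking the intersection of these finitely many full-measure subsets of $E^s$ remains of full measure, so for a.e.~stable leaf near $x$ the total crossing count with $\partial O_\ii$ is at most $KLn$, again yielding at most $\Cs n$ connected components.

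The main obstacle is the $n$-dependence of the ``bad'' set of matrices: as $n$ grows, the forbidden tangent hyperplanes accumulate and may approach every coordinate axis arbitrarily closely. This is accommodated because ``lying inside a hyperplane'' is a measure-zero condition (not an open one) within the fixed compact matrix set, so the avoidance requirement can be met without enlarging $\nor{M}$; the price of small angles is a smaller $O'$ (needed so the implicit function theorem still yields at most one intersection per hypersurface), which the statement permits.
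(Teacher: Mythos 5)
Your proof is essentially the paper's: decompose $\partial O_\ii$ near $x$ into $O(n)$ smooth pieces (pullbacks of the $\partial O_{i_\ell}$ under iterates of $T$), choose the unstable coordinate directions to avoid a measure-zero set of tangent directions, invoke the $L$-genericity from the weak transversality condition for the stable coordinate directions, then shrink $O'$ to convert tangent-space information into an intersection bound. The paper phrases the construction as first perturbing a near-orthonormal basis $a_1,\dots,a_d$ of $\T_x X$ (with the last $d_s$ vectors in $E^s(x)$) so that each $DT^\ell(x)a_k$ is $L$-generic or transversal as appropriate, and then taking $M$ to send the normalized vectors $D\kappa_j(x)a_k$ to the canonical basis, which is an equivalent, more compact, presentation of your measure-zero avoidance argument and in particular makes the uniform bound on $\nor{M},\nor{M^{-1}}$ immediate.
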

\begin{proof}
Let $L$ be as in Definition \ref{def:WTC}. Fix
$\ii=(i_0,\dots,i_{n-1})$ and $x\in \overline{O_\ii}$. Let
$a_1,\dots,a_d$ be a basis of $\T_x X$, which is close to an
orthonormal basis, such that its last $d_s$ vectors form a
basis of $E^s(x)$. We can ensure that, for any $\ell<n$,
$DT^\ell(x)a_k$ is $L$-generic with respect to $\partial
O_{i_j}$, for $d_u<k\leq d$. This is indeed a consequence of
the definition of weak transversality. Moving slightly the
vectors $a_k$ for $1\leq k\leq d_u$, we can also ensure that
$DT^\ell(x)a_k$ is transversal to the hypersurfaces defining
$\partial O_{i_j}$ at $T^\ell x$ for any $\ell < n$.

Let $b_k =D\kappa_j(x) \cdot a_k$, so that $b_1,\dots,b_d$ is a
basis of $\R^d$. Multiplying $a_k$ by a scalar, we can ensure
that $b_k$ has norm $1$. If $O'$ is a small enough neighborhood
of $x$, then $\kappa_\ell(O'\cap O_\ii)$ intersects almost any
line oriented by one of the vectors $b_k$, $d_u<k\leq d$, along
at most $nL$ connected components, by definition of
$L$-genericity. Moreover, it intersects any line oriented by
one of the vectors $b_k$, $1\leq k\leq d_u$, along at most one
connected component by construction.

Let $M$ be the matrix sending $b_1,\dots,b_d$ to the canonical
basis of $\R^d$, it satisfies the requirements of the lemma.
\end{proof}

The following lemma on partitions of unity is similar to \cite[Lemma 7.1]{BT1}.
\begin{lem}
\label{lem:sum} Let $t$ and $t_-$ be
arbitrary real numbers. There exists a constant $\Cs$ such that, for
any distributions $v_1,\dots, v_l$ with compact support in
$\R^d$, belonging to $H_p^{t,t_-}$, there exists a constant $C$
with
  \begin{equation}
  \norm{ \sum_{i=1}^l v_i}{H_p^{t,t_-}}^p \leq \Cs m^{p-1} \sum_{i=1}^l
  \norm{v_i}{H_p^{t,t_-}}^p + C \sum_{i=1}^l
  \norm{v_i}{H_p^{t-1,t_-}}^p,
  \end{equation}
where $m$ is the intersection multiplicity of the supports of
the $v_i$'s, i.e., $m=\sup_{x\in \R^d} \Card\{i \st x\in
\supp(v_i)\}$.
\end{lem}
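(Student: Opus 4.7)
My plan is to reduce the estimate to the pure $L_p$ case, where it is an immediate consequence of Hölder's inequality applied pointwise, and then lift it to $H_p^{t,t_-}$ via the localization principle of Lemma~\ref{lem:localization}, paying a price in the form of a lower-order error term (which is permitted to depend on the $v_i$'s through the constant $C$).

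First, I would settle the case $t=t_-=0$: at every $x$, the pointwise multiplicity assumption yields
$$\Bigl|\sum_{i=1}^l v_i(x)\Bigr| \le \sum_{i:\, x\in\supp v_i} |v_i(x)| \le m^{1-1/p}\Bigl(\sum_{i=1}^l |v_i(x)|^p\Bigr)^{1/p}$$
by Hölder, and integrating gives the $L_p$ bound without any error term. Next, for general $(t,t_-)$ in the stated range, I would pick a smooth rescaled partition of unity $\{\eta^\delta_k\}_{k\in\Z^d}$ with $\eta^\delta_k(x)=\eta((x-\delta k)/\delta)$ and $\sum_k\eta^\delta_k\equiv 1$, where $\delta>0$ is chosen small enough (depending on the given $v_i$'s) that every cube of side $\delta$ meets the supports of at most $\Cs m$ of the $v_i$. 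This is possible because the multiplicity-$m$ hypothesis implies that, at the fine scale, the number of supports meeting a cube approaches the pointwise multiplicity; the excess from cubes that straddle boundaries of supports is the reason for the lower-order error term.

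The core of the argument is the converse inequality stated in the remark after Lemma~\ref{lem:localization}: using that $\sum_k \eta^\delta_k\equiv 1$, one has
$$\Bigl\|\sum_i v_i\Bigr\|_{H_p^{t,t_-}}^p \le \Cs\sum_k \Bigl\|\eta^\delta_k\sum_i v_i\Bigr\|_{H_p^{t,t_-}}^p = \Cs\sum_k \Bigl\|\sum_i \eta^\delta_k v_i\Bigr\|_{H_p^{t,t_-}}^p.$$
For each $k$ the inner sum has at most $\Cs m$ nonzero terms, so by the triangle inequality and Hölder it is bounded by $\Cs m^{p-1}\sum_i\|\eta^\delta_k v_i\|_{H_p^{t,t_-}}^p$; summing over $k$, interchanging the sums, and applying the direct localization estimate together with Lemma~\ref{Leib} (multiplication by the smooth $\eta^\delta_k$ is bounded on $H_p^{t,t_-}$) recovers the main term $\Cs m^{p-1}\sum_i\|v_i\|_{H_p^{t,t_-}}^p$.

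The main obstacle is that the constants in Lemmas~\ref{Leib} and~\ref{lem:localization} degenerate as $\delta\to 0$, because the $C^\alpha$-norm of $\eta^\delta_k$ grows like $\delta^{-\alpha}$ and the support overlap structure tightens. I would handle this by separating the scales: writing $\eta^\delta_k v_i= P_\delta(\eta^\delta_k v_i)+Q_\delta(\eta^\delta_k v_i)$ with $P_\delta$ a smooth low-pass at frequency $\delta^{-1}$, the low-frequency part is under full control uniformly in $\delta$, whereas the high-frequency residue—which is where the $\delta^{-1}$ factor would appear—loses a derivative and can therefore be absorbed into $\|v_i\|_{H_p^{t-1,t_-}}$ with a $\delta$-dependent but $v_i$-specific constant $C$. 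This is the precise mechanism by which the error term $C\sum_i\|v_i\|_{H_p^{t-1,t_-}}^p$ compensates for the non-scale-invariance of the partition-of-unity argument, while the universal constant $\Cs$ multiplies the main $m^{p-1}$-controlled term.
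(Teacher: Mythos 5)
Your overall strategy is close in spirit to the paper's proof (both localize $u\mapsto \FF^{-1}(a_{t,t_-}\FF u)$ near the supports of the $v_i$'s and then exploit the finite multiplicity $m$ via a pointwise Hölder inequality, with the error of the localization step going into the lower-order term). However, there are two genuine problems with your write-up.

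First, the low-pass/high-pass mechanism you describe is backwards. If $P_\delta$ is a smooth low-pass at frequency $\delta^{-1}$ and $Q_\delta=\Id-P_\delta$, then it is the \emph{low}-frequency part that is absorbed by the weaker norm: one has $\norm{P_\delta w}{H_p^{t,t_-}}\le C\delta^{-1}\norm{w}{H_p^{t-1,t_-}}$, which is exactly a $\delta$-dependent bound against $\norm{\cdot}{H_p^{t-1,t_-}}$, while for the high-frequency part the relation $\norm{Q_\delta w}{H_p^{t-1,t_-}}\le C\delta\norm{w}{H_p^{t,t_-}}$ goes the wrong way and does not let you trade a $\delta^{-\alpha}$ from Lemma~\ref{Leib} for the $H_p^{t-1,t_-}$ norm. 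So the sentence ``the high-frequency residue\dots loses a derivative and can therefore be absorbed into $\norm{v_i}{H_p^{t-1,t_-}}$'' is incorrect, and the high-frequency part — which is where you actually need a $\delta$-uniform bound — is left unaddressed. What you want is really a commutator estimate: if $A$ denotes the Fourier multiplier giving the $H_p^{t,t_-}$-norm, one needs $\norm{[A,\Psi]v}{L_p}\le C(\Psi)\norm{v}{H_p^{t-1,t_-}}$ for a suitable cutoff $\Psi$, with no $\delta$ left in front of the main term. Without that precise statement your argument has a gap.

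Second, your scheme also uses the \emph{converse} localization inequality from the remark after Lemma~\ref{lem:localization}, which the paper states but explicitly says it will not prove or need; and the fine-scale partition of unity $\{\eta^\delta_k\}$ forces the $\delta$-dependence problem into both the direct and the converse localization steps at once, with no clear way to separate a universal constant from a $\delta$-dependent error in both places. The paper avoids all of this: it invokes precisely the commutator-type bound (\cite[Lemma~2.7]{baladi:Cinfty}, i.e.\ $\norm{\Psi Av-Av}{L_p}\le C\norm{v}{H_p^{t-1,t_-}}$ for a single cutoff $\Psi$ around $\supp v$), chooses for each $v_i$ a neighbourhood $K'_i\supset\supp v_i$ with the \emph{same} intersection multiplicity $m$ and a cutoff $\Psi_i$ supported there, and then applies the pointwise convexity bound $|\sum\Psi_i A v_i|^p\le m^{p-1}\sum|Av_i|^p$ before integrating. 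This is shorter, avoids any rescaled partition of unity, and produces the $v_i$-dependent constant $C$ exactly from the choice of $\Psi_i$. If you want to rescue your route, the key step you must state and prove is the commutator bound $\norm{[A,\eta^\delta_k]v}{L_p}\le C(\delta)\norm{v}{H_p^{t-1,t_-}}$ with the main term $\eta^\delta_k Av$ handled by the pointwise $L_p$ argument; but at that point you are essentially reconstructing the paper's argument with an extra sum over $k$.
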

\begin{proof}
Let $A$ be the operator acting on distributions by $A
v=\FF^{-1}((1+|\xi|^2+|\eta|^2)^{t/2}(1+|\eta|^2)^{t_-/2} \FF
v)$, so that $\norm{v}{\HH_p^{t,t_-}}=\norm{Av}{L_p}$.

\cite[Lemma 2.7]{baladi:Cinfty} shows that, for any
distribution $v$ with compact support $K$ and any neighborhood
$K'$ of this support, there exist $C>0$ and a function
$\Psi:\R^d\to [0,1]$ equal to $1$ on $K$ and vanishing on the
complement of $K'$, with
  \begin{equation}
  \norm{\Psi Av - Av}{L_p} \leq C \norm{v}{H_p^{t-1,t_-}}.
  \end{equation}

Let $v_1,\dots,v_l$ be distributions with compact supports
whose intersection multiplicity is $m$. Choose neighborhoods
$K'_1,\dots,K'_l$ of the supports of the $v_i$s
whose intersection multiplicity is also $m$, and functions
$\Psi_1,\dots,\Psi_l$ as above. Then
  \begin{equation}\label{previous}
  \norm{\sum_i v_i}{H_p^{t,t_-}}^p= \norm{\sum_i Av_i}{L_p}^p
  \leq \norm{\sum_i \Psi_i Av_i}{L_p}^p +
  C\sum_i\norm{v_i}{H_p^{t-1,t_-}}^p.
  \end{equation}
By convexity, the inequality $(x_1+\dots+x_m)^p \leq
m^{p-1}\sum x_i^p$ holds for any nonnegative numbers
$x_1,\dots,x_m$. Since the multiplicity of the $K'_i$s is at
most $m$, this yields
  \begin{equation}
  \left|\sum_i \Psi_i Av_i\right|^p \leq m^{p-1} \sum_i |Av_i|^p.
  \end{equation}
Integrating this inequality and using (\ref{previous}), we
get the lemma.
\end{proof}

\begin{proof}[Proof of Theorem \ref{thm:MainSpectralThm}]
Let $p$, $t$ and $t_-$ be as in the assumptions of the theorem.
Let $n>0$, and let $r_n>1$ (the precise value of $r_n$ will be
chosen later). We define a dilation $R_n$ on $\R^d$ by $R_n(z)=r_n z$.
Let $\norm{u}{n}$ be another norm on $\HH_p^{t,t_-}$,
given by
  \begin{equation}\label{zoom}
  \norm{u}{n}=\sum \norm{(\rho_j u)\circ \kappa_j^{-1} \circ R_n^{-1}}{H_p^{t,t_-}}.
  \end{equation}
The norm $\norm{u}{n}$ is of course equivalent to the usual norm on
$\HH_p^{t,t_-}$, but  we
look at the space $X_0$ at a smaller scale. Functions are
much more flatter at this new scale, so that estimates involving their $C^\alpha$ norm, such as
Lemma \ref{Leib} or Lemma~\ref{lem:CompositionDure}, will not cause
problems. This will also enable us to use partitions of
unity with very small supports without spoiling the estimates.
The use of this ``zooming" norm is similar to the good choice of
$\epsilon_0$ in \cite{Sau}, or the use of weighted norms in
\cite{DL}.

We will prove that, if $n$ is fixed and $r_n$ is large enough,
then
  \begin{multline}
  \label{eq:main}
  \norm{\Lp_g^n u}{n}^p\leq C \norm{u}{\HH_p^{0,t_-}}^p\\+ \Cs n^p
  D_n^b (D_n^e)^{p-1}
  \norm{ |\det DT^n| \max(\lambda_{u,n}^{-(t+t_-)},
  \lambda_{s,n}^{-t_-})^p
  |g^{(n)}|^p}{L_\infty}\norm{u}{n}^p.
  \end{multline}
The injection of $\HH_p^{t,t_-}$ into $\HH_p^{0,t_-}$ is
compact. Hence, by Hennion's theorem \cite{He}, the essential
spectral radius of $\Lp_g^n$ acting on $\HH_p^{t,t_-}$ (for
either $\norm{u}{\HH_p^{t,t_-}}$ or $\norm{u}{n}$, since these
norms are equivalent) is at most
  \begin{equation}
  \Bigl[\Cs n^p
  D_n^b (D_n^e)^{p-1}
  \norm{ |\det DT^n| \max(\lambda_{u,n}^{-(t+t_-)},
  \lambda_{s,n}^{-t_-})^p
  |g^{(n)}|^p}{L_\infty}\Bigr]^{1/p}.
  \end{equation}
Taking the power $1/n$ and letting $n$ tend to $\infty$, we
obtain Theorem \ref{thm:MainSpectralThm} since the quantity
$(\Cs n^p)^{1/pn}$ converges to $1$ (here, it is essential that
$\Cs$ does not depend on $n$).

\smallskip

It remains to prove \eqref{eq:main}, for large enough $r_n$.
The estimate will be subdivided into three steps:
\begin{enumerate}
\item Decomposing $u$ into a sum of distributions $v_{j,m}$ with small supports and
well controlled $\norm{\cdot}{n}$ norms.
\item Estimating each term $(1_{O_{\ii}} g^{(n)} v_{j,m})\circ
T_{\ii}^{-1}$, for $\ii$ of length $n$.
\item Adding all  terms to obtain $\Lp_g^n u$.
\end{enumerate}

\emph{First step.} For $1\leq j\leq J$ and $m\in \Z^d$, let
$\tilde v_{j,m}=\eta_m \cdot(\rho_j u)\circ \kappa_j^{-1}\circ
R_n^{-1}$, where $\eta_m(x)=\eta(x+m)$, with $\eta: \real^d \to [0,1]$  a
compactly supported $C^\infty$ function so that
$\sum_{m \in \Z^d} \eta_m=1$.
Since the intersection multiplicity of the supports of the functions $\eta_m$
is bounded, this is also the case for the $\tilde v_{j,m}$.
Moreover, if $j$ is fixed, we get using Lemma~\ref{lem:localization}
  \begin{equation}
  \label{eq:decompose}
  \begin{split}
  \sum_{m\in \Z^d} \norm{\tilde v_{j,m}}{H_p^{t,t_-}}^p&
  =\sum_{m\in \Z^d} \norm{\eta_m \cdot (\rho_j u)\circ \kappa_j^{-1}\circ R_n^{-1}}{H_p^{t,t_-}}^p
  \\&
  \leq \Cs \norm{(\rho_j u)\circ \kappa_j^{-1}\circ R_n^{-1}}{H_p^{t,t_-}}^p
 \leq \Cs \norm{u}{n}^p.
  \end{split}
  \end{equation}
Since $R_n$ expands the distances by a factor $r_n$ while the
size of the supports of the functions $\eta_m$ is uniformly
bounded, the supports of the distributions
$$v_{j,m}= \tilde v_{j,m}\circ R_n \circ \kappa_j=\eta_m \circ R_n \circ \kappa_j\cdot(\rho_j u)
$$
are arbitrarily small if $r_n$ is large enough.
Finally
  \begin{equation}
  u=\sum_j \rho_j u = \sum_{j,m}   v_{j,m}.
  \end{equation}

\emph{Second step.} Fix $j,k\in \{1,\dots,J\}$, $m\in \Z^d$ and
$\ii=(i_0,\dots,i_{n-1})$.
We will prove that
  \begin{multline}
  \label{eq:secondstep}
  \norm{ (\rho_k (g^{(n)}1_{O_\ii} v_{j,m})\circ T_\ii^{-1})\circ
  \kappa_k^{-1}\circ R_n^{-1}}{H_p^{t,t_-}} \leq C \norm{u}{\HH_p^{0,t_-}}\\
   + \Cs n \norm{ |\det DT^n|^{1/p} g^{(n)}\max(\lambda_{u,n}^{-t}, \lambda_{s,n}^{-(t+t_-)})}{L_\infty}\norm{\tilde v_{j,m}}{H_p^{t,t_-}}.
  \end{multline}

First, if the support of $v_{j,m}$ is small enough (which can
be ensured by taking $r_n$ large enough), there exists a
neighborhood $O$ of this support and a matrix $M$ satisfying
the conclusion of Lemma~ \ref{lem:truncate}: this follows from
Lemma~ \ref{lem:truncate} and the compactness  of $X_0$. Therefore, the
intersection of $R_n (M (\kappa_j(O\cap O_\ii)))$ with almost
any line parallel to a coordinate axis contains at most $\Cs n$
connected components. Hence, Lemma~\ref{lem:multiplier} implies
that the multiplication by $1_{O\cap O_\ii}\circ \kappa_j^{-1}
\circ M^{-1}\circ R_n^{-1}$ sends $H_p^{t,t_-}$ into itself,
with a norm bounded by $\Cs n$. Using the fact that $M$ and
$R_n$ commute, the properties of $M$, and Lemma~
\ref{lem:CompositionFacile}, we get
  \begin{equation}\label{v'}
  \norm{ 1_{O_\ii}\circ \kappa_j^{-1}\circ R^{-1}_n \cdot \tilde v_{j,m}}{H_p^{t,t_-}}
  \leq \Cs n \norm{\tilde v_{j,m}}{H_p^{t,t_-}}.
  \end{equation}
(Recall that $v_{j,m}$ is supported inside $O$.)
Next, let
$$\tilde v_{j,k,m}=((\rho_k \circ T_{\ii}) 1_{O_\ii})\circ
\kappa_j^{-1}\circ R^{-1}_n \cdot \tilde v_{j,m}$$ (we suppress
$\ii$ from the notation for simplicity). Let  also $\chi$ be a
$C^\infty$ function supported in the neighborhood $O$ of the
support of $v_{j,m}$ with $\chi\equiv 1$ on this support. Up to
taking larger $r_n$ we may ensure that  $\norm{(\chi (\rho_k
\circ T_\ii ))\circ \kappa_j^{-1}\circ R_n^{-1}}{C^\alpha}\le
\Cs$. Then Lemma~\ref{Leib} and \eqref{v'} imply
\begin{equation}\label{v''}
\norm{\tilde v_{j,k,m}}{H_p^{t,t_-}}
 \leq \Cs n \norm{\tilde v_{j,m}}{H_p^{t,t_-}}
 \end{equation}
 In addition, we have
  \begin{align}\label{glu}
  ((\rho_k \circ T_{\ii})1_{O_\ii} v_{j,m})\circ T_\ii^{-1}\circ \kappa_k^{-1}\circ R_n^{-1}
  &=\tilde v_{j,k,m} \circ R_n\circ \kappa_j \circ T_\ii^{-1}\circ \kappa_k^{-1}\circ R_n^{-1}\\
 \nonumber &= \tilde v_{j,k,m} \circ R_n \circ F \circ R_n^{-1},
  \end{align}
where $F$ is given by Lemma~  \ref{lem:iterate} (we use the
fact that the support of $v_{j,m}\circ T_\ii^{-1}$ is contained
in a very small neighborhood $O'$  if $r_n$ is large enough,
and again the compactness of $X_0$). The diffeomorphism $F$ satisfies the
assumptions of Lemma~ \ref{lem:CompositionDure}. Since the
dilations $R_n$ commute with any matrix, this is also the case
of the diffeomorphism $G=R_n\circ F\circ R_n^{-1}$. Applying
Lemma \ref{lem:CompositionDure} to $G$, we get (for some point
$x$ in the support of $v_{j,m}$, and some matrix $A$ of the
form $DF(R_n^{-1}(z))$ for some $z$)
  \begin{multline}\label{gla}
  \norm{\tilde v_{j,k,m} \circ R_n \circ F \circ R_n^{-1}}{H_p^{t,t_-}}
  \leq C \norm{u}{\HH_p^{0,t_-}}
  \\+\Cs \norm{\frac{\det A}{\det D G}}{C^\alpha} |\det A|^{-1/p}
  \max(\lambda_{u,n}(x)^{-t}, \lambda_{s,n}(x)^{-(t+t_-)})\norm{\tilde v_{j,k,m}}{H_p^{t,t_-}}.
  \end{multline}
The factor $\det A$ is close to $\det DT_\ii(x)^{-1}$.
Moreover, $\det DG=(\det DF)\circ R_n^{-1}$. By choosing
$r_n$ large enough, we can make sure that the $C^\alpha$ norm
of $\det DG$ is controlled by its sup norm, to ensure
that $\norm{\det A/\det DG}{C^\alpha}$ is uniformly
bounded.

Let $\chi'$ be a $C^\infty$ function supported in $O'$ with
$\chi'\equiv 1$ on the support of $v_{j,m}\circ T_\ii^{-1}$.
For $\delta>0$, we can ensure by increasing $r_n$ that  the
$C^\alpha$ norm of $(\chi' g^{(n)} )\circ T_\ii^{-1}
\circ\kappa_k^{-1}\circ R_n^{-1}$ is bounded by
$|g^{(n)}(x)|+\delta$ for some $x$ in the support of $v_{j,m}$.
Choosing $\delta>0$ small enough, we deduce from \eqref{gla},
Lemma~  \ref{Leib} and \eqref{v''}
  \begin{multline*}
  \norm{ (\rho_k(g^{(n)}1_{O_\ii} v_{j,m})\circ T_\ii^{-1})\circ \kappa_k^{-1}\circ R_n^{-1}}{H_p^{t,t_-}}
  \leq C \norm{u}{\HH_p^{0,t_-}}
  \\ + \Cs n \norm{ |\det DT^n|^{1/p} g^{(n)}\max(\lambda_{u,n}^{-t}, \lambda_{s,n}^{-(t+t_-)})}{L_\infty}\norm{\tilde v_{j,m}}{H_p^{t,t_-}}.
  \end{multline*}
This proves \eqref{eq:secondstep}.

\emph{Third step.} We have $\Lp_g^n u =\sum_{j,m} \sum_{\ii}
(1_{O_\ii} g^{(n)} v_{j,m})\circ T_\ii^{-1}$. (Note that only
finitely many terms in this sum are nonzero by compactness of
the support of each $\rho_j$.) We claim that the intersection
multiplicity of the supports of the functions $(1_{O_\ii}
g^{(n)} v_{j,m})\circ T_\ii^{-1}$ is bounded by $\Cs D_n^e$.
Indeed, this follows from the fact that any point $x\in X_0$
belongs to at most $D_n^e$ sets $\overline{T_\ii(O_\ii)}$, and
that the intersection multiplicity of the supports of the
functions $v_{j,m}$ is bounded.

To estimate $\norm{\Lp_g^n u}{n}$, we have to bound each
term $\norm{ (\rho_k \Lp_g^n u)\circ \kappa_k^{-1}\circ
R_n^{-1}}{H_p^{t,t_-}}$, for $1\leq k\leq J$. Let us fix such a
$k$. By Lemma \ref{lem:sum}, we have
  \begin{multline*}
  \norm{ (\rho_k \Lp_g^n u)\circ \kappa_k^{-1}\circ R_n^{-1}}{H_p^{t,t_-}}^p
  \leq C \norm{u}{\HH_p^{0,t_-}}^p
  \\ +\Cs (\Cs D_n^e)^{p-1}\sum_{j,m,\ii}
  \norm{(\rho_k(1_{O_\ii} g^{(n)} v_{j,m})\circ T_\ii^{-1})\circ \kappa_k^{-1}\circ
  R_n^{-1}}{H_p^{t,t_-}}^p.
  \end{multline*}
We can bound each term in the sum using \eqref{eq:secondstep}
and the convexity inequality $(a+b)^p \leq 2^{p-1}(a^p+b^p)$.
Moreover, for any $(j,m)$, the number of parameters $\ii$ for
which the corresponding term is nonzero is bounded by the
number of sets $\overline{O_\ii}$ intersecting the support of
$v_{j,m}$. Choosing $r_n$ large enough, we can ensure that the
supports of the $v_{j,m}$ are small enough so that this number is
bounded by $D_n^b$. Together with \eqref{eq:decompose}, this
concludes the proof of \eqref{eq:main}, and of Theorem
\ref{thm:MainSpectralThm}.
\end{proof}

\appendix

\section{Corrigendum to \cite[Lemma 2.8]{baladi:Cinfty} -- About interpolation}
\label{whynotbaladi}

The statement of \cite[Lemma 2.8]{baladi:Cinfty} should be
replaced by\footnote{This has no consequences on the other
claims in  \cite{baladi:Cinfty}.}: letting $n=[|t|]
+[|t+t_-|]+d+4$, if $g$ is $C^{n}$, then
  \begin{equation}
  \norm{g u}{\HH_p^{t,t_-}} \leq
  \Cs\norm{g}{C^{n-1}(C^1_s)}
  \norm{u}{\HH_p^{t,t_-}} + C \norm{u}{\HH_p^{t-1,t_-}},
  \end{equation}
where $\norm{g}{C^{n-1}(C^1_s)}$ is the maximum between
$\norm{g}{L_\infty}$ and the  $C^{n-1}$ norm of the first
derivatives of $g$ along $E^s$. It was mistakenly claimed in
\cite[Lemma 2.8]{baladi:Cinfty} that it is enough to take
$n=3$. The sentence ``This can be shown by a
straightforward...oscillatory integral argument" in the proof
there should be replaced by ``This can be shown by integrating
by parts $[|p|]+[|q]|+d+1$ times in total with respect to
$(u,v)$, noting that
\begin{align*}
(1+|\eta-s\theta|^2+|\xi-s\omega|^2)^{p/2}&
(1+|\xi-s\omega|^2)^{q/2}(1+|\eta|^2+|\xi|^2)^{-p/2}(1+|\xi|^2)^{-q/2}
\\&\le
16  (1+|s\omega|^2)^{|q|/2} (1+ |s \theta|^2+ |s\omega|^2)^{|p|/2} \, .
\end{align*}
Since $\partial ^{\gamma''+\gamma'} h$ has been differentiated
up to $3$ times including $|\gamma'|\in \{ 1,2\}$
times along  $x$-directions, we get at most $[|p|]+[|q]|+d+4$ derivatives in total."
In particular \cite[Lemma 2.8]{baladi:Cinfty} only holds if $g$ is sufficiently
differentiable.

We derive  via interpolation in Lemma~ \ref{Leib} a simpler
Leibniz-type bound which takes the place of  \cite[Lemma
2.8]{baladi:Cinfty} and is valid for $g\in C^\alpha$ for any
$\alpha >0$. The ``zooming" norm (\ref{zoom}) then allows us to
replace $\norm{g}{C^\alpha}$ by a sup-norm type estimate for
arbitrary $g$.

The interpolation estimates also yield a chain-rule-type bound
(Lemma~ \ref{lem:CompositionDure} and  Remark ~ \ref{extend}) which extends
\cite[Lemma 2.10]{baladi:Cinfty} to arbitrary
differentiability: the proof of \cite[Lemma
2.10]{baladi:Cinfty} uses that $T$ is $C^\infty$ implicitly in
several places (when referring to arguments of \cite{AG}), although
a modification of this proof along the lines given above gives
the claim for $C^k$ dynamics, with $k(d)$ large if $d$ is
large.

\section{Properties of physical measures}
\label{sec:SRB}

\renewcommand{\LSRB}{\mathcal{L}}

In this section, we prove Theorem \ref{thm:ExistSRB}. In fact,
we will prove a more general result in a more abstract context.
Let $X$ be a manifold, $X_0$ a compact subset of $X$ with
positive Lebesgue measure, and $T:X_0 \to X_0$ a transformation
for which Lebesgue measure is nonsingular. We will denote in
this appendix by $\LSRB$ the corresponding transfer operator,
defined by duality on $L_1(\Leb)$ by $\int_{X_0} \LSRB f\cdot
g\dLeb=\int_{X_0} f\cdot g\circ T \dLeb$.
\begin{thm}
\label{thm:SRBabstrait} Let $H$ be a Banach space of
distributions supported on $X_0$. Assume that
\begin{enumerate}
\item There exist $\alpha>0$ and $C>0$ such that, for any $u\in H$
and $f\in C^\alpha(X)$, then $fu\in H$ and $\norm{fu}{H}\leq
C\norm{f}{C^\alpha}\norm{u}{H}$.
\item The space $H\cap L_\infty(\Leb)$ is dense in $H$.
\item The transfer operator $\LSRB$ associated to $T$ sends continuously
$H\cap L_\infty(\Leb)$ into itself, hence it admits a
continuous extension to $H$ (still denoted by $\LSRB$). We
assume that the essential spectral radius of this extension
is $<1$.
\item There exist $f_0\in H\cap L_\infty(\Leb)$ taking its values in $[0,1]$ and $N_0>0$ such that, for
any $\phi\in L_\infty(\Leb)$, then $f_0=1$ on the support
of $\LSRB^{N_0}\phi$.
\item For any $u\in H$ which is a limit of nonnegative functions $u_n\in H\cap L_\infty(\Leb)$
and for which there exists a measure $\mu_u$ such
that\footnote{We write $\langle u,g\dLeb\rangle$ and not
$\langle u, g\rangle$, in accordance with the convention
stated in the footnote page \pageref{page:foot}, viewing
distributions as generalized functions which can only be
integrated against smooth densities.} $\langle u, g\dLeb
\rangle=\int g\dd\mu_u$ for any $C^\infty$ function $g$,
then the measure $\mu_u$ gives zero mass to the
discontinuity set of $T$.
\end{enumerate}
Then there exist a finite number of probability measures
$\mu_1,\dots,\mu_l$ which are $T$-invariant and ergodic, and
disjoint sets $A_1,\dots,A_l$ such that $\mu_i(A_i)=1$,
$\Leb(A_i)>0$, $Leb(X_0\backslash \bigcup_{i=1}^l A_i)=0$ and,
for every $x\in A_i$ and every function $f\in
\overline{C^0(X_0)\cap H}$ (the closure of $C^0(X_0)\cap H$ in
$C^0(X_0)$), then $\frac{1}{n}\sum_{j=0}^{n-1}f(T^j x) \to \int
f\dd\mu_i$.

Moreover, for every $i$, there exist an integer $k_i$ and a
decomposition $\mu_i=\mu_{i,1}+\dots+\mu_{i,k_i}$ such that $T$
sends $\mu_{i,j}$ to $\mu_{i,j+1}$ for $j\in \Z/k_i\Z$, and the
probability measures $k_i\mu_{i,j}$ are exponentially mixing
for $T^{k_i}$ and $C^\alpha$ test functions.
\end{thm}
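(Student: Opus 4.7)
The plan is to apply a Hennion--Nussbaum spectral decomposition to $\LSRB$ on $H$, use the peripheral eigenspaces to construct the finite family of invariant probability measures $\mu_i$, then bootstrap the abstract spectral convergence in $H$ into pointwise convergence of Birkhoff averages against Lebesgue measure.

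\emph{Step 1: Peripheral spectrum.} First I would establish that $\LSRB$ has spectral radius at most $1$ on $H$, with semi-simple peripheral spectrum. The key identity, valid for $\phi \in H \cap L_\infty(\Leb)$ and $g \in C^\alpha$, is
\begin{equation*}
\langle \LSRB^n \phi, g\dLeb\rangle = \int g\circ T^n \cdot \phi\dLeb, \qquad \bigl|\langle \LSRB^n \phi, g\dLeb\rangle\bigr| \le \|g\|_\infty\|\phi\|_{L_1}.
\end{equation*}
An eigenvalue $\lambda$ of modulus $>1$ or a non-trivial Jordan block on the unit circle would force the left-hand side to grow exponentially (resp.\ polynomially) for suitably chosen $\phi$ (exist by density, condition (2)) and $g$ (exist because non-zero peripheral eigen-distributions can be tested against $C^\alpha$, by condition (1) and duality), contradicting the uniform bound. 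Combined with condition (3), Hennion's theorem yields a finite peripheral spectrum on the unit circle with finite-dimensional semi-simple eigenspaces and a spectral gap against the rest.

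\emph{Step 2: Invariant probability measures.} For $\phi_0 \in H \cap L_\infty$ nonnegative with $\int \phi_0\dLeb = 1$, the Cesaro means $\phi_n = n^{-1}\sum_{k=0}^{n-1}\LSRB^k\phi_0$ converge in $H$ to $h := P_1 \phi_0$, where $P_1$ is the spectral projector onto $\ker(\LSRB-I)$, by the spectral gap and semi-simplicity of Step~1. Each $\phi_n$ is a nonnegative bounded function of unit Lebesgue integral, so condition (5) promotes $h$ to a probability measure $\mu_h$ on $X_0$ giving no mass to the discontinuity set of $T$; the identity $\LSRB h = h$ then translates into $T_\ast \mu_h = \mu_h$ as measures. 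The positive cone in $\operatorname{Im}(P_1)$ is a lattice under the order induced by condition (5), and its finitely many extreme rays produce the invariant ergodic probability measures $\mu_1,\dots,\mu_l$.

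\emph{Step 3: Basins of total Lebesgue measure.} This will be the main obstacle: passing from convergence in $H$ to pointwise $\Leb$-a.e.\ Cesaro convergence. Fix a countable family $\{g_m\}$ dense in $\overline{C^0(X_0)\cap H}$. For any $\phi \in H \cap L_\infty$, Step~2 applied to $\phi$ combined with the pairing identity of Step~1 yields
\begin{equation*}
\frac{1}{n}\sum_{k=0}^{n-1} \int g_m\circ T^k \cdot \phi\dLeb \;\longrightarrow\; \sum_{i=1}^{l} c_i(\phi)\int g_m\dd\mu_i,
\end{equation*}
with $c_i(\phi)\ge 0$ and $\sum c_i(\phi)=\int\phi\dLeb$. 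Varying $\phi$ over indicator functions $\mathbf{1}_B$ (approximated in $H$ via condition (2) and controlled using condition (4) to confine the relevant supports) and applying a Hopf-style maximal/ergodic argument, the pointwise limit $g_m^\ast(x):=\lim_n n^{-1}\sum_{k<n}g_m(T^kx)$ exists Lebesgue-a.e.\ and takes only the finitely many values $\int g_m\dd\mu_i$. The common level sets of the $g_m^\ast$ define pairwise disjoint invariant sets $A_i$ with $\mu_i(A_i)=1$, $\Leb(A_i)>0$ and $\Leb(X_0\setminus\bigsqcup_i A_i)=0$; ergodicity of each $\mu_i$ identifies $A_i$ as its basin. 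Finally, for the cyclic decomposition, restrict $\LSRB$ to the finite-dimensional invariant subspace associated to $\mu_i$; ergodicity forces the peripheral spectrum there to be a cyclic group of roots of unity of some order $k_i$, and its eigenvectors (again promoted to measures via condition (5)) yield $\mu_i=\mu_{i,1}+\dots+\mu_{i,k_i}$ with $T_\ast\mu_{i,j}=\mu_{i,j+1\bmod k_i}$. On each $\mu_{i,j}$-block, $\LSRB^{k_i}$ has a simple eigenvalue $1$ with the rest of its spectrum strictly inside the unit disk, and the standard spectral-gap calculation (pairing with $C^\alpha$ test functions via condition (1)) yields the exponential decay of correlations.
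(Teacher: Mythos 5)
Your outline follows the same broad strategy as the paper (peripheral spectral theory on $H$, identification of eigen-distributions with measures, construction of ergodic components, then upgrading abstract spectral convergence to pointwise Birkhoff convergence), but there are two genuine gaps at exactly the spots where the paper does real work.

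\emph{Invariance of the reference measure.} In your Step~2 you write that the identity $\LSRB h = h$ ``translates into $T_*\mu_h = \mu_h$ as measures.'' This translation is not automatic: the only relation one has between $h\in H$ and $\mu_h$ is $\langle h, g\dLeb\rangle = \int g\dd\mu_h$ for \emph{smooth} $g$, and $g\circ T$ is not smooth (it jumps along the discontinuity set of $T$). The formal chain $\langle h,g\dLeb\rangle = \langle \LSRB h, g\dLeb\rangle = \langle h, g\circ T\dLeb\rangle = \int g\circ T\dd\mu_h$ has an undefined middle term. The paper bridges this by first writing $\mu_h$ as a weak limit of $\frac1n\sum T_*^i(f_0\Leb)$ and then invoking condition~(5) to show the discontinuity set is $\mu_h$-null, so the weak convergence still applies to $g\circ T$. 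Your sketch treats this as immediate; it is in fact the central point where condition~(5) enters, and without it the argument has a hole.

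\emph{Pointwise convergence of Birkhoff averages.} In Step~3 you invoke ``a Hopf-style maximal/ergodic argument'' to pass from Cesaro convergence of the spatial averages $\int g_m\circ T^k\cdot\phi\dLeb$ to Lebesgue-a.e.\ pointwise convergence. This does not directly work: the Hopf maximal inequality and its descendants require an invariant (or at least quasi-invariant with bounded Radon-Nikodym cocycle) reference measure, and $\Leb$ is not $T$-invariant here. The paper avoids this by a different mechanism: using the spectral gap to establish a second-moment bound $\int |S_nf/n - S_mf/m|^2 f_0\dLeb = O(1/n)+O(1/m)$, deducing $L^2(f_0\dLeb)$-Cauchyness, extracting the sparse subsequence $n=p^4$ to get a.e.\ convergence along it, and then filling in between consecutive $p^4$'s by a routine comparison. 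That is a qualitatively different route and is the crux of getting basins of full Lebesgue measure out of spectral data; replacing it with ``a Hopf-style argument'' skips the hard part.

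A smaller issue: your claim that the positive cone in $\operatorname{Im}(P_1)$ ``is a lattice under the order induced by condition~(5)'' misattributes the source of the lattice structure. What the paper actually shows is that every $\mu_u$ for $u$ in a peripheral eigenspace is absolutely continuous with bounded density with respect to $\mu=\mu_{\Pi_1 f_0}$, using conditions~(1) and~(4); this embeds $E_1$ into $L_\infty(\mu)$, and the lattice structure comes from there. Condition~(5) is used for invariance of $\mu$, not for the order structure.
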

The proof will also describe a direct relationship between the
eigenfunctions of $\LSRB$ for eigenvalues of modulus $1$, and
the physical measures of $T$. The first part of the proof is
directly borrowed from \cite{BKL}.

The first, second and fourth conditions say that the space $H$
is sufficiently large. They are satisfied in the setting of
this paper (taking $f_0=1_{X_0}$, which belongs to
$\HH_p^{t,t_-}$), but also in the case of an attractor, when
$T(X_0)$ is contained in the interior of $X_0$ (the function
$f_0$ can be taken $C^\infty$, compactly supported in the
interior of $X_0$, equal to $1$ on $T(X_0)$).

The fifth condition is necessary,
as shown by Example \ref{ex2} in Section \ref{sec:examples}:
taking for $H$ the space of distributions in the Sobolev space
$H_2^{-1}$ supported in $[-1,1]\times \{0,1\}$, then all the
assumptions of the theorem but the fifth one are satisfied, and
the conclusion of the theorem does not hold.

\begin{proof}
Let us first prove the existence of $C>0$ such that, for any
$n\in \N$,
  \begin{equation}
  \label{eq:ItBounded}
  \norm{\LSRB^n}{H\to H}\leq C.
  \end{equation}
Otherwise, $\LSRB$ has an eigenvalue of modulus $>1$, or a
nontrivial Jordan block for an eigenvalue of modulus $1$. Let
$\lambda$ be an eigenvalue of $\LSRB$ of maximal modulus, with
a Jordan block of maximal size $d$. Since $L_\infty\cap H$ is
dense in $H$, its image under the eigenprojections is dense in
the eigenspaces, which are finite dimensional. Hence, it
coincides with the full eigenspaces. Therefore, there exists a
bounded function $f$ such that
$n^{-d}\sum_{i=0}^{n-1}\lambda^{-i} \LSRB^i f$ converges to a
nonzero limit $u$. For any $C^\infty$ function $g$,
  \begin{equation*}
  \langle u, g\dLeb\rangle = \lim \frac{1}{n^d}\sum_{i=0}^{n-1}
  \lambda^{-i} \langle \LSRB^i f, g\dLeb\rangle
  =\lim \frac{1}{n^d}\sum_{i=0}^{n-1}
  \lambda^{-i} \int f\cdot g\circ T^i \dLeb.
  \end{equation*}
If $|\lambda|>1$ or $d\geq 2$, this quantity converges to $0$
when $n\to \infty$ since $\int f\cdot g\circ T^i \dLeb$ is
uniformly bounded. This contradicts the fact that $u$ is
nonzero, and proves \eqref{eq:ItBounded}.

For $|\lambda|=1$, let $E_\lambda$ denote the corresponding
eigenspace, and $\Pi_\lambda:H\to E_\lambda$ the corresponding
eigenprojection. It is given by
  \begin{equation}
  \Pi_\lambda f = \lim \frac{1}{n}\sum_{i=0}^{n-1}\lambda^{-i} \LSRB^i f,
  \end{equation}
where the convergence holds in $H$. Since $L_\infty(\Leb)\cap
H$ is dense in $H$, $E_\lambda=\Pi_\lambda (L_\infty(\Leb)\cap
H)$. For any $f\in L_\infty(\Leb)\cap H$ and $g\in C^\infty$,
  \begin{equation}
  \left| \langle \Pi_\lambda f, g\dLeb\rangle \right| \leq
  \lim \frac{1}{n}\sum_{i=0}^{n-1} \left| \int f\cdot g\circ T^i\dLeb\right|
  \leq C \norm{f}{L_\infty}\norm{g}{C^0}.
  \end{equation}
By Riesz representation theorem on the compact space $X_0$,
this implies that, for any $u\in E_\lambda$, there exists a
finite measure $\mu_u$ on $X_0$ such that $\langle u,
g\dLeb\rangle = \int g\dd\mu_u$. Moreover, for $i\geq N_0$ and
$g\geq 0$,
  \begin{align*}
  \left| \int f\cdot g\circ T^i\dLeb\right|&
  =\left| \int \LSRB^{N_0}f\cdot g\circ T^{i-N_0} \dLeb\right|
  =\left| \int \LSRB^{N_0}f\cdot f_0 \cdot g\circ T^{i-N_0} \dLeb \right|
  \\&
  \leq C \int f_0\cdot g\circ T^{i-N_0} \dLeb
  =C\int \LSRB^{i-N_0} f_0 \cdot g\dLeb.
  \end{align*}
Averaging and taking the limit, we obtain
  \begin{equation}
  \left| \int g \dd\mu_{\Pi_\lambda f} \right| \leq C \int g \dd\mu_{\Pi_1 f_0}.
  \end{equation}
This means that the measures $\mu_u$ are all absolutely
continuous with respect to the reference measure
$\mu:=\mu_{\Pi_1 f_0}$, with bounded density.

Let us show that the measure $\mu$ is invariant. This is
formally trivial from the computation
  \begin{equation*}
  \int g\dd\mu=\langle \Pi_1 f_0, g\dLeb \rangle
  =\langle \LSRB \Pi_1 f_0, g\dLeb \rangle
  =\langle \Pi_1 f_0, g\circ T \dLeb\rangle
  =\int g\circ T \dd\mu.
  \end{equation*}
However, this argument is not correct since $\langle \Pi_1 f_0,
g\circ T \dLeb\rangle$ is not well defined since $g$ is not
smooth. More importantly, even if we could define it, the
equality between $\langle \Pi_1 f_0, g\circ T \dLeb\rangle$ and
$\int g\circ T\dd\mu$ would not be trivial since the
relationship between $\Pi_1 f_0$ and $\de\mu$ is established
only for continuous functions.

The rigorous proof relies on the fifth assumption of the
theorem. By definition, if $g$ is $C^\infty$, then $\int
g\dd\mu= \lim \int g\dd
\left(\frac{1}{n}\sum_{i=0}^{n-1}T_*^i(f_0\Leb)\right)$. By
density, this equality extends to $C^0$ functions, hence $\mu$
is the weak limit of the sequence of measures
$\frac{1}{n}\sum_{i=0}^{n-1}T_*^i(f_0\Leb)$. In turn, for any
function $h$ whose discontinuity set has zero measure for
$\mu$,
  \begin{equation}
  \label{eq:weak_limit}
  \int h\dd\mu= \lim \int h\dd
  \left(\frac{1}{n}\sum_{i=0}^{n-1}T_*^i(f_0\Leb)\right).
  \end{equation}
If $g$ is a continuous function, then $g\circ T$ is continuous
except on the discontinuity set of $T$. The fifth assumption of
the theorem shows that this set has zero measure for $\mu$.
Hence, \eqref{eq:weak_limit} applies to $g\circ T$. It also
applies to $g$. Since the right hand side for $g$ and $g\circ
T$ coincide up to $O(1/n)$, this yields $\int g\circ T
\dd\mu=\int g\dd\mu$ and concludes the proof of the invariance
of $\mu$.

In the following, we shall encounter several instances of
similar equations that are formally trivial but need a rigorous
justification. Let us give a last justification of this type,
and leave the remaining ones to the reader. We claim that, if
$\phi\in C^\alpha$ and $g\in C^\infty$,
  \begin{equation}
  \label{eq:lqskjdfmlk}
  \langle\LSRB^i( \phi \Pi_1 f_0), g\dLeb \rangle
  = \int \phi\cdot g\circ T^i \dd\mu.
  \end{equation}
Indeed, $\LSRB^i(\phi \Pi_1 f_0)$ is the limit in $H$ of
$\LSRB^i(\phi \frac{1}{n}\sum_{j=0}^{n-1} \LSRB^j f_0)$, hence
  \begin{align*}
  \langle\LSRB^i( \phi \Pi_1 f_0), g\dLeb \rangle&
  =\lim \frac{1}{n}\sum_{j=0}^{n-1} \langle \LSRB^i(\phi\LSRB^j f_0), g\dLeb \rangle
  \\&=\lim \frac{1}{n}\sum_{j=0}^{n-1} \int \phi \LSRB^j f_0 \cdot g\circ T^i \dLeb
  \\&
  =\lim  \int \phi \cdot g\circ T^i \dd\left(\frac{1}{n}\sum_{j=0}^{n-1}T_*^j(f_0\Leb)\right).
  \end{align*}
The measure $\mu$ gives zero mass to the discontinuities of
$g\circ T^i$ (since it is invariant and gives zero mass to the
discontinuities of $T$). Hence, \eqref{eq:weak_limit} holds for
$\phi \cdot g\circ T^i$. This concludes the proof of
\eqref{eq:lqskjdfmlk}.

\smallskip

For any $u\in E_\lambda$, write $\mu_u=\phi_u \mu$ where
$\phi_u \in L_\infty(\mu)$ is defined $\mu$-almost everywhere.
The equation $\LSRB u=\lambda u$ translates into $T_*(\phi_u
\mu)=\lambda \phi_u \mu$. Hence, since $\mu$ is invariant,
  \begin{align*}
  \int | \phi_u\circ T -\lambda^{-1} \phi_u|^2 \dd\mu&=
  \int |\phi_u|^2 \circ T \dd\mu + \int |\phi_u|^2 -2\Re
  \int \overline{\phi_u} \circ T  \lambda^{-1}\phi_u \dd\mu
  \\&
  =2 \int |\phi_u|^2 \dd\mu -2 \Re\int \lambda^{-1}\overline{\phi_u}\dd T_*(\phi_u \mu)
  =0.
  \end{align*}
Let $F_\lambda=\{ \phi \in L_\infty(\mu) \st \phi\circ
T=\lambda^{-1} \phi\}$ (this is a space of equivalence classes
of functions), then the map $\Phi_\lambda: u\mapsto \phi_u$
sends (injectively) $E_\lambda$ to $F_\lambda$. Let us show
that it is also surjective.

Let $\phi\in F_\lambda$. By Lusin's theorem, there exists a
sequence of $C^\alpha$ functions $\phi_p$ with
$\norm{\phi-\phi_p}{L_1(\mu)}\leq 1/p$. Let
$u_p=\Pi_\lambda(\phi_p \Pi_1 f_0)$, and let $\mu_p=\mu_{u_p}$.
Let us prove that the total mass of the measure $\phi\de\mu-
\de\mu_p$ converges to $0$. If $g$ is a $C^\infty$ function,
  \begin{align*}
  \int g\dd\mu_p&=\langle u_p, g\dLeb\rangle=\lim \frac{1}{n}\sum_{i=0}^{n-1}
  \lambda^{-i}\langle \LSRB^i (\phi_p \Pi_1 f_0), g\dLeb\rangle
  \\&  = \lim \frac{1}{n}\sum_{i=0}^{n-1}
  \lambda^{-i}\int  \phi_p \cdot g \circ T^i \dd\mu,
  \end{align*}
by \eqref{eq:lqskjdfmlk}. On the other hand, for any $n$, since
$\mu$ is invariant and $\phi\circ T=\lambda^{-1}\phi$,
  \begin{equation*}
  \int g \phi\dd\mu= \frac{1}{n}\sum_{i=0}^{n-1} \int g\circ T^i \phi\circ T^i \dd\mu
  = \frac{1}{n}\sum_{i=0}^{n-1} \lambda^{-i} \int g\circ T^i \phi \dd\mu.
  \end{equation*}
Subtracting the two previous equations, we get
  \begin{equation}
  \label{eq:oiqusfdioj}
  \left|\int g \phi\dd\mu -\int g\dd\mu_p \right| \leq \norm{
  \phi-\phi_p}{L_1(\mu)} \norm{g}{C^0},
  \end{equation}
which proves that the total mass of $\phi\de\mu- \de\mu_p$
converges to $0$.

The sequence $u_p$ belongs to the finite dimensional space
$E_\lambda$, and the elements of $E_\lambda$ are separated by
the linear forms given by the integration along $C^\infty$
densities (since $H$ is a space of distributions). Since
$\langle u_p,g\dLeb \rangle$ converges for any $g$, the
sequence $u_p$ is therefore converging to a limit $u_\infty$.
By construction, $\Phi_\lambda(u_\infty)=\phi$. This concludes
the proof of the surjectivity of $\Phi_\lambda$.

\smallskip

The eigenvalues of $\LSRB$ of modulus $1$ are exactly the
$\lambda$ such that $F_\lambda$ is not reduced to $0$. This set
is a group, since $\phi_\lambda \phi_{\lambda'} \in F_{\lambda
\lambda'}$ whenever $\phi_\lambda\in F_\lambda$ and
$\phi_{\lambda'}\in F_{\lambda'}$. Since $\LSRB$ only has a
finite number of eigenvalues of modulus $1$, this implies that
these eigenvalues are roots of unity. In particular, there
exists $N>0$ such that $\lambda^N=1$ for any eigenvalue
$\lambda$.

\smallskip

Let us now assume that $1$ is the only eigenvalue of $\LSRB$ of
modulus $1$ (in the general case, this will be true for
$\LSRB^N$, so we will be able to deduce the general case from
this particular case). Under this assumption, for any $u\in H$,
$\LSRB^n u$ converges to $\Pi_1 u$.

Consider the subset of $F_1$ given by the nonnegative functions
with integral $1$. It is a convex cone in $F_1$, whose extremal
points are of the form $1_B$ for some minimal invariant set
$B$. Such extremal points are automatically linearly
independent. Since $F_1$ is finite-dimensional, there is only a
finite number of them, say $1_{B_1},\dots, 1_{B_l}$, and a
function belongs to $F_1$ if and only if it can be written as
$\phi=\sum \alpha_i 1_{B_i}$ for some scalars
$\alpha_1,\dots,\alpha_l$. The decomposition of the function
$1\in F_1$ is given by $1=\sum 1_{B_i}$, hence the sets $B_i$
cover the whole space up to a set of zero measure for $\mu$.
Moreover, since $B_i$ is minimal, the measure
$\mu_i:=\frac{1_{B_i}\mu}{\mu(B_i)}$ is an invariant ergodic
probability measure.

Let $u_i=\Phi_1^{-1}(1_{B_i}) \in H$, then any element of $E_1$
is a linear combination of the $u_i$. In particular, this
applies to $\Pi_1(fu_i)$ for any $f\in C^\alpha$. Let us show that
  \begin{equation}
  \label{eq:Exprimeai}
  \Pi_1(fu_i)=\left(\int f\dd\mu_i\right) u_i.
  \end{equation}
We can write $\Pi_1(fu_i)=\sum a_{ij}(f)u_j$. Let us fix once
and for all $l$ sequences of $C^\alpha$ functions $\phi_{j,p}$
taking values in $[0,1]$ and such that $\phi_{j,p}$ converges
in $L_1(\mu)$ to $1_{B_j}$. Since $\langle u_j,
\phi_{j',p}\dLeb \rangle=\int_{B_j} \phi_{j',p}\dd\mu \to
\delta_{jj'}\mu(B_j)$, we have $a_{ij}(f)=\frac{1}{\mu(B_j)}
\lim_{p\to\infty} \langle \Pi_1 (fu_i), \phi_{j,p}\dLeb
\rangle$. Moreover, if $p$ is fixed,
  \begin{align*}
  \langle \Pi_1 (fu_i), \phi_{j,p}\dLeb \rangle&
  =\lim_{n\to \infty} \langle \LSRB^n (fu_i), \phi_{j,p}\dLeb \rangle
  \\&
  =\lim_{n\to \infty} \int_{B_i} f \phi_{j,p}\circ T^n \dd\mu.
  \end{align*}
Writing $\phi_{j,p}\circ T^n=1_{B_j}\circ T^n+
(\phi_{j,p}-1_{B_j})\circ T^n$ and using $1_{B_j}\circ
T^n=1_{B_j}$ and $\norm{(\phi_{j,p}-1_{B_j})\circ
T^n}{L_1(\mu)}=\norm{\phi_{j,p}-1_{B_j}}{L_1(\mu)}\to_{p\to\infty}
0$, we obtain \eqref{eq:Exprimeai}.

This enables us to deduce that each measure $\mu_i$ is
exponentially mixing, as follows. Let $\delta<1$ be such that
$\norm{\LSRB^n -\Pi_1}{H\to H}=O(\delta^n)$. Then, if $f,g$ are
$C^\alpha$ functions,
  \begin{align*}
  \int f\cdot g\circ T^n \dd\mu_i&
  =\frac{1}{\mu(B_i)}\langle \LSRB^n(fu_i), g \dLeb \rangle
  =\frac{1}{\mu(B_i)}\langle \Pi_1(fu_i), g\dLeb \rangle+O(\delta^n)
  \\&
  =\left(\int f\dd\mu_i\right)\frac{1}{\mu(B_i)} \langle u_i, g\dLeb\rangle+O(\delta^n)
  \\&
  =\left(\int f\dd\mu_i\right)\left(\int g\dd\mu_i\right)+O(\delta^n).
  \end{align*}

\smallskip

We now turn to the relationships between Lebesgue measure and
the measures $\mu_i$. For any function $f\in L_\infty(\Leb)\cap
H$, let us write
  \begin{equation}
  \Pi_1(f)=\sum_{i=1}^l b_i(f) u_i.
  \end{equation}
We will need to describe the coefficients $b_i(f)$. Let $n_p$
be a sequence tending fast enough to $\infty$ so that
$\norm{\Lp^{n_p} -\Pi_1}{H\to H}
\norm{\phi_{i,p}}{C^\alpha}\to_{p\to\infty} 0$. If $f$ belongs to
$L_\infty(\Leb) \cap H$,
  \begin{align*}
  \int f\cdot  \phi_{i,p}\circ T^{n_p} \dLeb &= \langle \LSRB^{n_p}f, \phi_{i,p}\dLeb \rangle
  \\&
  = \langle \phi_{i,p} (\LSRB^{n_p}-\Pi_1)f,\dLeb \rangle
  + \langle \Pi_1 f, \phi_{i,p} \dLeb \rangle
  \\&
  = o(1) + \sum_{j=1}^l b_j(f) \int_{B_j} \phi_{i,p} \dd\mu
  = o(1) + b_i(f)\mu(B_i).
  \end{align*}
More generally, $\int f\cdot
\left(\frac{1}{n_p}\sum_{n=n_p}^{2n_p-1}\phi_{i,p} \circ
T^n\right)\dLeb \to \mu(B_i)b_i(f)$. The sequence
$\frac{1}{n_p}\sum_{n=n_p}^{2n_p-1}\phi_{i,p} \circ T^n$ is
bounded in $L^2(\Leb)$, and asymptotically invariant. Let
$h_i:X\to [0,1]$ be one of its weak limits. It satisfies
  \begin{equation}
  b_i(f)= \frac{1}{\mu(B_i)} \int f h_i \dLeb,
  \end{equation}
and $h_i\circ T=h_i$. Since $b_i(f_0)=1$, we have $\int h_i f_0
\dLeb=\mu(B_i)$.

Let us now compute $\int h_ih_j f_0\dLeb$. We have
  \begin{align*}
  \mu(B_j)b_j(\phi_{i,p}\LSRB^{n} f_0) &= \int \phi_{i,p} \LSRB^{n} f_0 h_j\dLeb
  = \int f_0 \phi_{i,p}\circ T^{n} h_j\circ T^{n}\dLeb
  \\&
  =\int\phi_{i,p} \circ T^{n} h_j f_0\dLeb.
  \end{align*}
Taking the average and the weak-limit, we obtain
  \begin{equation}
  \int h_i h_j f_0\dLeb=\mu(B_j)\lim_{p\to \infty} \frac{1}{n_p}\sum_{n=n_p}^{2n_p-1}
  b_j( \phi_{i,p} \LSRB^n f_0).
  \end{equation}
Moreover, if $n\geq n_p$,
  \begin{equation}
  \phi_{i,p} \LSRB^{n}f_0=\phi_{i,p} (\LSRB^{n} -\Pi_1) f_0 + \phi_{i,p} \Pi_1 f_0.
  \end{equation}
The first term converges to $0$ in $H$, and the computation
made in \eqref{eq:oiqusfdioj} shows that $\Pi_1(\phi_{i,p}
\Pi_1 f_0)$ converges to $u_i$. This implies that $b_j(
\phi_{i,p} \LSRB^{n}f_0)$ converges to $\delta_{ij}$. This
yields
  \begin{equation}
  \label{eq:orthogonal}
  \int h_i h_j f_0\dLeb=\mu(B_j)\delta_{ij}.
  \end{equation}
Let $X_1=\{x\st f_0(x)>0\}$. Taking $i=j$, we get $\int h_i^2
f_0\dLeb=\mu(B_i)=\int h_i f_0\dLeb$. Since $h_i$ takes its
values in $[0,1]$, this shows that there exists a subset
$C_i^0$ of $X_1$ such that $h_i 1_{X_1}=1_{C_i^0}$, with
$\int_{C_i^0} f_0 \dLeb=\mu(B_i)$. Moreover,
\eqref{eq:orthogonal} shows that $\Leb(C^0_i\cap C^0_j)=0$ if
$i\not=j$. Let $C_i=T^{-N_0}C_i^0$, then these sets are
disjoint. For any function $f\in L_\infty(\Leb)\cap H$, since
$\LSRB^{N_0}f$ is supported in $X_1$,
  \begin{align*}
  b_i(f)&=b_i(\LSRB^{N_0} f)=\frac{1}{\mu(B_i)} \int \LSRB^{N_0}f h_i \dLeb
  =\frac{1}{\mu(B_i)} \int \LSRB^{N_0} f\cdot 1_{C_i^0} \dLeb
  \\&=\frac{1}{\mu(B_i)} \int f \cdot 1_{C_i^0}\circ T^{N_0} \dLeb
  =\frac{1}{\mu(B_i)} \int_{C_i} f\dLeb.
  \end{align*}
Moreover, since $\LSRB^{N_0}1$ is supported on the sets
$C_i^0$,
  \begin{align*}
  \Leb(X_0)&=\int 1 \dLeb=\int \LSRB^{N_0}1 \dLeb= \int \LSRB^{N_0}1 \cdot 1_{\bigcup C_i^0} \dLeb
  \\&= \int 1_{\bigcup C^0_i} \circ T^{N_0}\dLeb
  =\int 1_{\bigcup C_i} \dLeb.
  \end{align*}
This shows that the sets $C_i$ form a partition of the space
modulo a set of zero Lebesgue measure. We have proved that
  \begin{equation}
  \Pi_1(f)=\sum_{i=1}^l \frac{ \int_{C_i} f\dLeb}{\mu(B_i)} u_i.
  \end{equation}

Let us now turn to the convergence of $\frac{1}{n}
\sum_{j=0}^{n-1}f\circ T^j$, for $f\in L_\infty(\Leb)\cap H$.
Let $S_n f=\sum_{j=0}^{n-1} f\circ T^j$, we will estimate $\int
|S_n f/n -S_m f/m|^2 f_0\dLeb$. For $i,j\geq 0$, we have
  \begin{align*}
  \int f\circ T^i \cdot f&\circ T^{i+j} f_0\dLeb
  = \int f\LSRB^i(f_0)
  \cdot f\circ T^j \dLeb
  \\&
  =\int \LSRB^j( f\LSRB^i f_0) f\dLeb
  = \langle \LSRB^j( f\LSRB^i f_0), f\rangle
  \\&
  = \langle \LSRB^j ( f \Pi_1 f_0), f\rangle + O(\delta^i)
  = \langle \Pi_1 (f\Pi_1 f_0), f\rangle + O(\delta^i)+O(\delta^j),
  \end{align*}
where $\delta<1$ is given by the spectral gap of the operator
$\LSRB$. Hence, for $n,m>0$,
  \begin{align*}
  \int S_nf& \cdot S_mf f_0\dLeb
  \\&=nm\langle \Pi_1 (f\Pi_1 f_0), f\rangle+
  \sum_{\substack{0\leq i\leq n-1\\ 0\leq j \leq m-1-i}} O(\delta^i)+O(\delta^j)
  + \sum_{\substack{0\leq i\leq m-1\\ 0<j\leq n-1-i}} O(\delta^i)+O(\delta^j)
  \\&
  =nm\langle \Pi_1 (f\Pi_1 f_0), f\rangle + O(n)+O(m).
  \end{align*}
Expanding the square in $|S_n f/n-S_m f/m|^2$, we get using the
previous equation
  \begin{align*}
  \int &|S_n f/n -S_m f/m|^2 f_0\dLeb\\&=
  \frac{1}{n^2}\int S_nf\cdot S_n f f_0\dLeb+ \frac{1}{m^2}\int S_mf\cdot S_mf f_0\dLeb
  -\frac{2}{nm} \int S_nf \cdot S_mf f_0\dLeb
  \\&=
  O(1/n)+O(1/m).
  \end{align*}

The functions $g_p=S_{p^4}f/p^4$ therefore satisfy
$\norm{g_{p+1}-g_p}{L_2(f_0\dLeb)}=O(1/p^2)$, which is
summable. This implies that $g_p$ converges in $L_2(f_0\dLeb)$
and almost everywhere for this measure. For a general $n\in
\N$, let $p$ be such that $p^4\leq n <(p+1)^4$, then $S_n f/n -
S_{p^4}f/p^4$ is uniformly small if $n$ is large. Hence, $S_n
f/n$ converges almost everywhere and in $L_2(f_0\dLeb)$, to a
function $\phi_f\in L_2(f_0\dLeb)$.

Let us now identify the function $\phi_f$. For any smooth
function $\phi$,
  \begin{align*}
  \int \phi \cdot f\circ T^n &f_0\dLeb = \langle \LSRB^n (\phi f_0), f\dLeb \rangle
  \\& \to \langle \Pi_1 (\phi f_0), f\dLeb \rangle
  = \sum_{i=1}^l b_i(\phi f_0) \int_{B_i} f\dd\mu
  \\&\ \ \
  = \sum_{i=1}^l \frac{\int_{C_i} \phi f_0\dLeb}{\mu(B_i)}\int_{B_i} f\dd\mu
  = \int \left(\sum_{i=1}^l 1_{C_i} \frac{\int_{B_i} f\dd\mu}{\mu(B_i)}\right) \phi f_0\dLeb.
  \end{align*}
This shows that, with respect to the measure $f_0\dLeb$, the
sequence of functions $f\circ T^n$ converges weakly to the
function $\tilde\phi_f:=\sum_{i=1}^l 1_{C_i} \left(\int
f\dd\mu_i\right)$. In turn, $S_n f/n$ converges weakly to
$\tilde\phi_f$. However, $S_nf/n$ converges strongly to
$\phi_f$, hence $\phi_f=\tilde \phi_f$ almost everywhere for
$f_0\dLeb$, and in particular on almost all $\bigcup_{i=1}^l
C_i^0$.

Let $A_i^f$ be the set of points for which $S_n f/n$ converges
to $\int f\dd\mu_i$. We have shown that $A_i^f$ contains a full
Lebesgue measure subset of $C^0_i$. However, $A_i^f$ is
$T$-invariant, hence it contains a full Lebesgue measure subset
of $C_i$. Since the sets $C_i$ cover Lebesgue almost all the
space, $\Leb(X\backslash \bigcup_{i=1}^l A_i^f)=0$. By the Birkhoff
ergodic theorem, $A^f_i$ is also a full $\mu$ measure subset of
$B_i$. Let $f_n$ be a countable sequence of functions in
$C^0(X_0)\cap H$, which is $C^0$-dense in
$\overline{C^0(X_0)\cap H}$, and set $A_i=\bigcap_{n\in \N}
A^{f_n}_i$. These sets satisfy the conclusion of the theorem.

\smallskip

This concludes the proof of the theorem when $1$ is the only
eigenvalue of modulus $1$ of $\LSRB$. If $\LSRB$ has other
eigenvalues of modulus $1$, let $N$ be such that $\lambda^N=1$
for all these eigenvalues $\lambda$. The above result applies
to $T^N$, and gives sets $A_1,\dots,A_l$ and probability
measures $\mu_1,\dots,\mu_l$. The map $T$ induces a permutation
of the sets $A_i$ (modulo sets of $0$ measure for $\mu$), say
$T(A_i)=A_{\sigma(i)} \mod 0$ for some permutation $\sigma$ of
$\{1,\dots,l\}$. For any orbit $(i_1,\dots,i_k)$ of $\sigma$,
the measure $\frac{1}{k}(\mu_{i_1}+\dots+\mu_{i_k})$ is
$T$-invariant, and its basin of attraction contains
$\bigcap_{j=0}^{N-1} T^{-j}(A_{i_1}\cup\dots\cup A_{i_k})$.
These measures are the measures of the statement of the
theorem, and their properties readily follow from the
corresponding properties for $T^N$.
\end{proof}

To deduce Theorem \ref{thm:ExistSRB} from Theorem~\ref{thm:SRBabstrait},
we just have to check the fifth
condition of  Theorem~\ref{thm:SRBabstrait} since the other ones are trivially
satisfied. Working locally in a chart, it is sufficient to
prove the following lemma:
\begin{lem}\label{deduce}
Let $K$ be a compact smooth hypersurface with boundary in
$\R^d$, whose intersection with almost every line parallel to a
coordinate axis has at most $L<\infty$ points. Let
$1/p-1<t_-\leq 0\leq t<1/p$, and let $u\in H_p^{t,t_-}$ be such
that
\begin{itemize}
\item there exists a sequence of nonnegative functions
$u_n\in H_p^{t,t_-}\cap L_\infty(\Leb)$ converging in
$H_p^{t,t_-}$ to $u$.
\item there exists a measure $\mu$ with $\langle u, g\dLeb\rangle = \int
g\dd\mu$ for any $C^\infty$ function $g$.
\item The support of $u$ does not intersect $\partial K$.
\end{itemize}
Then $\mu(K)=0$.
\end{lem}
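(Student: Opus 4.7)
The plan is to argue by contradiction: assume $\mu(K)>0$ and show that $\mu|_K$ itself must lie in $H_p^{t,t_-}$, then contradict this via the anisotropic Fubini description of the space recalled in Section~\ref{sec:local}.

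Since $\supp u$ is disjoint from $\partial K$, a smooth partition of unity (using Lemma~\ref{Leib}) reduces to the case where $u$ is supported in a ball $B$ in which $K\cap B$ is the zero set of a smooth defining function $\phi$ with $\nabla\phi$ nonvanishing, and $B^\pm := \{\pm\phi>0\}$ are the two sides. Both $B^\pm$ inherit line-crossing bounds from $K$, so Lemma~\ref{lem:multiplier} gives $v^\pm := 1_{B^\pm}u \in H_p^{t,t_-}$. Since $u_n = 1_{B^+}u_n+1_{B^-}u_n$ pointwise off the Lebesgue-null set $K$, passing to the limit yields $u=v^++v^-$ in $H_p^{t,t_-}$. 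The approximants $\{1_{B^\pm}u_n\}$ are uniformly bounded in total variation (since $u_n\to\mu$ with bounded mass), so Banach--Alaoglu (along a subsequence) produces weak-$*$ limit measures $\nu^\pm$ on $\overline{B^\pm}$; testing against $C^\infty$ densities matches $\nu^\pm$ with $v^\pm$ as distributions, so each $v^\pm$ is a nonnegative Radon measure and $\nu^++\nu^-=\mu$.

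To upgrade this to $\mu|_K\in H_p^{t,t_-}$, set $B^+_\varepsilon := \{\phi>\varepsilon\}$; a brief Morse-type argument using smoothness of $\phi$ together with the line-crossing of $K$ yields a line-crossing bound for $B^+_\varepsilon$ uniform in small $\varepsilon$. Lemma~\ref{lem:multiplier} then gives $1_{B^+_\varepsilon}v^+\in H_p^{t,t_-}$ uniformly in $\varepsilon$, and a portmanteau argument (at generic $\varepsilon$ with $\mu(\partial B^+_\varepsilon)=0$) identifies this distribution with the restricted measure $\nu^+|_{B^+_\varepsilon}$. Monotone convergence of measures plus weak compactness in the reflexive space $H_p^{t,t_-}$ yield $\nu^+|_{B^+}\in H_p^{t,t_-}$; subtracting, $\nu^+|_K = v^+-\nu^+|_{B^+}\in H_p^{t,t_-}$, and similarly $\nu^-|_K\in H_p^{t,t_-}$. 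Summing gives $\mu|_K=\nu^+|_K+\nu^-|_K\in H_p^{t,t_-}$.

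For the contradiction, observe that $(1+|\xi|^2+|\eta|^2)^{-t/2}$ is a bounded Marcinkiewicz multiplier (Theorem~\ref{thm:Marc}) for $t\geq 0$, giving the continuous embedding $H_p^{t,t_-}\hookrightarrow H_p^{0,t_-}$. By the $d_s$-dimensional Fubini-type description of $H_p^{0,t_-}$ from Section~\ref{sec:local},
\begin{equation*}
\|\mu|_K\|_{H_p^{0,t_-}(\R^d)}^p = \int_{\R^{d_u}} \bigl\|(\mu|_K)_x\bigr\|_{H_p^{t_-}(\R^{d_s})}^p \, \de x,
\end{equation*}
where $(\mu|_K)_x$ denotes the fiber of $\mu|_K$ over $x\in\R^{d_u}$. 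The line-crossing condition in stable directions forces $K\cap(\{x\}\times\R^{d_s})$ to have Hausdorff dimension at most $d_s-1$ for a.e.\ $x$, and the standard Fourier computation showing $\delta_0\notin H_p^s(\R)$ for $s>1/p-1$, extended to measures supported on smooth sets of dimension $\leq d_s-1$ in $\R^{d_s}$, implies that any nonzero Radon measure on such a fiber has infinite $H_p^{t_-}(\R^{d_s})$-norm under $t_->1/p-1$. If $\mu|_K\neq 0$, disintegration produces a positive-$d_u$-measure set of $x$ at which the fiber is nonzero, so the right-hand side is infinite, contradicting the finite left-hand side. The main obstacle is the uniform line-crossing estimate for the shrunken sets $B^+_\varepsilon$, which requires a careful Morse-type argument relating $\{\phi=0\}$ (with $\leq L$ intersections per coordinate line) to its nearby level sets; everything else is a standard combination of the multiplier, weak compactness, and Fubini tools already assembled in Section~\ref{sec:local}.
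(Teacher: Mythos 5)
Your strategy is genuinely different from the paper's, and the comparison is instructive: the paper never tries to show that $\mu|_K$ itself lies in $H_p^{t,t_-}$, nor does it need a ``no nonzero measure on a hypersurface lies in the space'' lemma. Instead it constructs shrinking neighborhoods $K_n\supset K\cap\supp u$ with uniform line-crossing and $\Leb(K_n)\to 0$, shows $1_{K_n}\to 0$ in $H_p^{t,t_-}$ by compactness of a higher-regularity embedding, deduces $\|1_{K_n}u\|_{H_p^{t,t_-}}\to 0$ by smooth approximation, and finally exploits the nonnegativity of the approximants to bound $\mu(K)\le\langle 1_{K_n}u,\dLeb\rangle\le C\|1_{K_n}u\|\to 0$ directly. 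This short-circuits every difficulty you flag.

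There are real gaps in your proposal. The ``Morse-type argument'' for a uniform line-crossing bound on $B^+_\varepsilon$ is not a side issue; it is exactly the content of the paper's explicit construction of $K_n$ (writing $K$ locally as a graph and proving the thickened graph meets almost every coordinate line in at most $2L+2$ boundary points), and you have not supplied it. More seriously, the concluding contradiction is not established. First, the $d_s$-dimensional Fubini description of $H_p^{0,t_-}$ is stated in Section~\ref{sec:local} for functions, via the Fourier multiplier $\FF^{-1}(a_{0,t_-}\FF\,\cdot)$; the slices $h_x$ it produces are \emph{not} a priori the disintegration slices $(\mu|_K)_x$ of the measure. The two notions agree only when the $\R^{d_u}$-marginal of $\mu|_K$ is absolutely continuous with respect to Lebesgue, which you do not verify (for a graph-like $K$ transverse to the unstable directions it can fail). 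Second, the claim that any nonzero finite measure supported on a $(d_s{-}1)$-dimensional submanifold of $\R^{d_s}$ has infinite $H_p^{t_-}(\R^{d_s})$-norm whenever $t_->1/p-1$ is asserted as an ``extension of the $\delta_0\notin H_p^s(\R)$ computation,'' but that one-line computation gives the threshold $s=1/p-1$ only in one dimension; to propagate it to codimension-one supports in $\R^{d_s}$ you would need to iterate the one-dimensional Strichartz/Fubini argument through the stable coordinate directions, which again requires making sense of restricting a singular measure to a.e.\ coordinate line. None of this is carried out. Until these points are settled, the proof does not close; and even once settled, the route is considerably longer than the paper's direct estimate, which avoids proving $\mu|_K\in H_p^{t,t_-}$ altogether.
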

\begin{proof}
Let us first prove that there exists a sequence of
neighborhoods $K_n$ of $K\cap \supp u$, whose intersection with
almost every line parallel to a coordinate axis has at most
$L'<\infty$ connected components, and with $\Leb(K_n)\to 0$.

Working locally, we can assume that $K$ is transversal to a
coordinate direction, say the last one. Hence, we can assume
that $u$ is supported in $[-1/2,1/2]^{d-1}\times \R$, and that
$K$ can be written as the graph of a smooth function $f$,
  \begin{equation}
  K=\{(x_1,\dots,x_{d-1}, f(x_1,\dots,x_{d-1})) \st
  (x_1,\dots,x_{d-1})\in [-1,1]^{d-1}\}.
  \end{equation}
Let $K_n=\{(x_1,\dots,x_{d-1}, f(x_1,\dots,x_{d-1})+y) \st
(x_1,\dots,x_{d-1})\in [-1,1]^{d-1}, |y|<1/n\}$. It is a
neighborhood of $K\cap \supp u$. It intersects any line
parallel to the last coordinate axis along one connected
component. Consider now another coordinate axis, say the first
one. Fix $(x_2,\dots,x_{d-1})$. Then the boundary of $K_n \cap
(\R\times \{(x_2,\dots,x_{d-1})\} \times \R)$ is formed of two
vertical segments and two translates of the graph of the
function $x \mapsto f(x,x_2,\dots,x_d)$. For almost every
$(x_2,\dots,x_d)$, this graph intersects almost every
horizontal line along at most $L$ points. Hence, the
intersection of almost every horizontal line with the boundary
of $K_n \cap (\R\times \{(x_2,\dots,x_{d-1})\} \times \R)$ has
at most $2L+2$ points. In particular, $K_n$ intersects almost
every horizontal line along at most $2L+1$ connected
components. This concludes the construction of $K_n$.

\smallskip

By Lemma \ref{lem:multiplier}, there exists a constant $C$ such
that, for any $n\in \N$, the multiplication by $1_{K_n}$ sends
$H_p^{t,t_-}$ into itself, with a norm bounded by $C$. In
particular, $1_{K_n}$ belongs to $H_p^{t,t_-}$ and is bounded
in this space.

Let us show that $1_{K_n}$ tends to $0$ in $H_p^{t,t_-}$. Let
$t'\in (t, 1/p)$. Then $1_{K_n}$ is also bounded in
$H_p^{t',t_-}$ by the same argument. Since the injection of
$H_p^{t',t_-}$ in $H_p^{t,t_-}$ is compact, the sequence
$1_{K_n}$ is therefore relatively compact in $H_p^{t,t_-}$. Let
$v$ be one of its cluster values. For any smooth function $g$,
  \begin{equation}
  \langle v, g\dLeb\rangle=\lim \langle 1_{K_n}, g\dLeb \rangle
  =\lim \int  1_{K_n} g\dLeb =0,
  \end{equation}
since $\Leb(K_n)$ tends to $0$. Hence, $v$ is the zero
distribution. The sequence $1_{K_n}$ is relatively compact in
$H_p^{t,t_-}$ and its only cluster value is zero, hence it
converges to $0$.

Let us now show that, for any $v\in H_p^{t,t_-}$,
  \begin{equation}
  \label{eq:opqousfoj}
  \norm{1_{K_n} v}{H_p^{t,t_-}}\to 0.
  \end{equation}
Choose a $C^\infty$ function $\phi$ with
$\norm{v-\phi}{H_p^{t,t_-}}\leq \epsilon$, then
  \begin{align*}
  \norm{1_{K_n} v}{H_p^{t,t_-}}&\leq \norm{1_{K_n}(v-\phi)}{H_p^{t,t_-}} + \norm{1_{K_n} \phi}{H_p^{t,t_-}}
  \\&\leq C \norm{v-\phi}{H_p^{t,t_-}} + \norm{\phi}{C^1} \norm{1_{K_n}}{H_p^{t,t_-}}
  \leq C\epsilon+o(1).
  \end{align*}
This proves \eqref{eq:opqousfoj}.

Let $g$ be a $C^\infty$ function supported in $K_n$, taking its
values in $[0,1]$, equal to $1$ on $K$. We claim that
  \begin{equation}
  \label{eq:oiuwoijxwcl}
  \int g\dd\mu \leq \langle 1_{K_n} u, \dLeb\rangle.
  \end{equation}
Indeed, write $u=\lim u_m$ where $u_m$ is a nonnegative
function belonging to $L_\infty(\Leb) \cap H_p^{t,t_-}$. Then
$\langle u_m, g\dLeb\rangle=\int gu_m\dLeb\leq \int 1_{K_n}
u_m\dLeb=\langle 1_{K_n} u_m, \dLeb \rangle$. Taking the limit
over $m$, we get \eqref{eq:oiuwoijxwcl}.

We can now conclude the proof: by \eqref{eq:oiuwoijxwcl}, we
have $\mu(K) \leq C\norm{1_{K_n} u}{H_p^{t,t_-}}$. This
quantity converges to $0$ by \eqref{eq:opqousfoj}.
\end{proof}

\begin{rmk}\label{nodirac}
The proof of the previous lemma implies that Dirac masses
cannot belong to $H^{t,t_-}_p$ if $1/p-1<t_-\leq 0\leq t<1/p$:
assume for a contradiction that $\delta_0$, the Dirac mass at
$0$, belongs to  $H^{t,t_-}_p$. Take $K_n$ the ball of radius
$1/n$ centered at $0$. Then $\delta_0 =1_{K_n} \delta_0$ for
each $n$, but $1_{K_n}\delta_0$ tends to zero in $H^{t,t_-}_p$
as $n\to \infty$, a contradiction.
\end{rmk}

\bibliographystyle{alpha}

\end{document}